\newtheorem{theorem}{Theorem}[section]
\newtheorem{lemma}[theorem]{Lemma}
\newtheorem{corollary}[theorem]{Corollary}
\newtheorem{proposition}[theorem]{Proposition}
\theoremstyle{definition}
\newtheorem{example}[theorem]{Example}
\theoremstyle{remark}
\newtheorem{remark}[theorem]{Remark}
\numberwithin{equation}{section}
\newcommand{\la}{\langle}
\newcommand{\ra}{\rangle}
\begin{document}
\title [Coble surfaces]{Coble surfaces with finite automorphism group}

\author{Shigeyuki Kond\=o}
\address{Graduate School of Mathematics, Nagoya University, Nagoya,
464-8602, Japan}
\email{kondo@math.nagoya-u.ac.jp}
\thanks{Research of the author is partially supported by
Grant-in-Aid for Scientific Research (A) No.20H00112.}

\begin{abstract}
We classify Coble surfaces with finite automorphism group in characteristic $p\geq 0, p\ne 2$.  There are exactly 9 isomorphism classes of such surfaces.
\end{abstract}

\maketitle

\rightline{In memory of Professor Ernest Vinberg}
\bigskip

\section{Introduction}
\label{sec1}
In this paper we work over an algebraically closed field $k$ of characteristic $p\geq 0, p \ne 2$ and give a classification of Coble surfaces with finite automorphism group.
The notion of a Coble surface comes from the work of Coble \cite{Coble} on birational automorphisms of the plane preserving a rational sextic curve with ten nodes.  Later
Miyanishi called such a surface a Coble surface in his unpublished notes.  
Dolgachev and Zhang \cite{DZ} introduced a more general definition of a Coble surface. In this paper we consider a Coble surface in the strong sense, 
that is, a {\it Coble surface} $S$ is a rational surface with $|-K_S|=\emptyset$ and $|-2K_S|=\{ B_1+\cdots + B_n\}$ where $B_i$ is a non-singular rational curve and $B_i\cap B_j =\emptyset$ $(i\not=j)$. We call $B_1,\ldots, B_n$ the boundary components of $S$.  A Coble surface appears in the classification of involutions of algebraic $K3$ surfaces due to Nikulin \cite{N1} because, 
by definition, there exists a double covering $\pi : X\to S$ branched along $B_1+\cdots + B_n$ such that $X$ is a $K3$ surface.  
A Coble surface also appears as a degeneration of Enriques surfaces (Morrison \cite{Morrison}).  Thus Coble surfaces inherit many properties of Enriques surfaces.
A general Coble surface has an infinite group of automorphisms (e.g. Dolgachev, Kond\=o \cite[Theorem 9.6.1]{DK}) and hence it is natural to classify Coble surfaces with finite automorphism group as in the case of Enriques
surfaces.

First we recall the case of Enriques surfaces. 
The classification of complex Enriques surfaces with finite automorphism group was done by Nikulin \cite{N2} and the author \cite{Ko}.  On the other hand, 
the story of Enriques surfaces $Y$ in characteristic 2 is completely different from the case of 
$p\ne 2$ because there are three types of Enriques surfaces according to 
${\rm Pic}^\tau_Y\cong {\bf Z}/2{\bf Z}, \mu_2$ or $\alpha_2$, where ${\rm Pic}^\tau_Y$ is the group
of divisor classes numerically equivalent to zero (Bombieri and Mumford \cite{BM}).  
They are called classical, $\mu_2$- or $\alpha_2$-surface respectively.
Recently Martin \cite{Martin} gave a classification of Enriques surfaces with finite automorphism group in characteristic $p > 2$ and $p=2$, for $\mu_2$-surfaces, and Katsura, the author and Martin \cite{KKM} have finished the classification for classical and 
$\alpha_2$-surfaces in characteristic 2.  
In the paper \cite{N2}, Nikulin also introduced the notion of $R$-invariant of an Enriques surface $Y$
which is a pair of a root lattice and a 2-elementary finite abelian group.  The $R$-invariant is important because it describes $(-2)$-curves on $Y$ and determines ${\rm Aut}(Y)$ up to finite groups
(we call a non-singular rational curve on a surface with self-intersection number $-m$ 
a $(-m)$-curve).   
For an Enriques surface $Y$, $(-2)$-curves on $Y$ define the nef cone which is a fundamental domain of the subgroup $W(Y)$ of the orthogonal group ${\rm O}({\rm Num}(Y))$ generated by reflections associated with $(-2)$-curves, where ${\rm Num}(Y)$ is the quotient group of the N\'eron-Severi group by the torsion subgroup.  If $W(Y)$ is of finite index in ${\rm O}({\rm Num}(Y))$, then ${\rm Aut}(Y)$ is finite.
Moreover if the automorphism group ${\rm Aut}(Y)$ is finite, then the nef cone is a finite polytope, or equivalently, the number of $(-2)$-curves on $Y$ is finite.  The following Table \ref{Enriques} gives a classification of Enriques surfaces with finite automorphism group in characteristic $p\ne 2$ (Martin \cite{Martin}).  Here 
type means the type of the finite polytope,
$k$ is the number of $(-2)$-curves on $Y$, ${\rm D}_8$ is the dihedral
group of order 8 and $S_n$ is the symmetric group of degree $n$.

\begin{table}[!htb]
\centering
\begin{tabular}{llllllll}
\hline
{\rm Type} & $p$ &  $k$  &{\rm Aut}(Y) & {\rm $R$-invariant} &{\rm Moduli}\\
\hline \hline
{\rm I} & any & 12 & ${\rm D}_8 $ & $(E_8\oplus A_1, \{0\})$ & ${\bf A}^1\setminus \{0,-2^{10}\}$\\ \hline
\rm{II} & any & 12 & $S_4 $  & $(D_9, \{0\})$ &  ${\bf A}^1\setminus \{0,-2^8\}$\\  \hline
\rm{III} & any & 20 & $ ({\bf Z}/4{\bf Z}\times ({\bf Z}/2{\bf Z})^2) \cdot D_8 $  & $(D_8\oplus A_1^{\oplus 2}, ({\bf Z}/2{\bf Z})^2)$ & unique \\  \hline
\rm{IV} & any  & 20 &  $({\bf Z}/2{\bf Z})^4 \cdot ({\bf Z}/5{\bf Z} \cdot {\bf Z}/4{\bf Z})$   & $(D_5^{\oplus 2}, {\bf Z}/2{\bf Z})$ & unique \\  \hline
\rm{V} & $\neq 3$ & 20  & $S_4 \times {\bf Z}/2{\bf Z}$ &    $(E_7\oplus A_2\oplus A_1, {\bf Z}/2{\bf Z})$ & unique \\  \hline
\rm{VI} & $\neq 3,5$ & 20  & $S_5$ & $(E_6\oplus A_4, \{0\})$ & unique\\  \hline
\rm{VII} & $\neq 5$ & 20  & $S_5$ & $(A_9\oplus A_1, {\bf Z}/2{\bf Z})$ & unique \\  \hline
\end{tabular}
\caption{Enriques surfaces with finite automorphism group ($p\ne 2$)}
\label{Enriques}
\end{table}
\noindent
Since the moduli space of Enriques surfaces has dimension 10, Enriques surfaces with finite automorphism group are very rare.

We consider the same problem for Coble surfaces.  
Since the situation of Coble surfaces in characteristic 2 is different from the case of
$p\ne 2$, for example, the canonical covering of a Coble surface in characteristic 2 is inseparable, we focus on the case of $p\ne 2$ in this paper.  
For a Coble surface $S$, following Mukai, instead of its N\'eron-Severi group, we consider a lattice ${\rm CM}(S)$ of rank 10, called Coble-Mukai lattice, which is defined by the orthogonal complement of
the boundary components $B_1, \ldots, B_n$ in the quadratic space generated by ${\rm Pic}(S)$ and ${1\over 2} B_1, \ldots, {1\over 2}B_n$.  
An effective class $\alpha$ in ${\rm CM}(S)$ with $\alpha^2 = -2$ is called an effective root, and an effective root $\alpha$ is called irreducible if 
$|\alpha -\beta|=\emptyset$ for any other effective root $\beta$.  
An effective irreducible root is either a $(-2)$-curve or a divisor of the form 
$2E + {1\over 2} B_i + {1\over 2} B_j$ where $E$ is a $(-1)$-curve with $E\cdot B_i = E\cdot B_j=1$.
The Coble-Mukai lattice ${\rm CM}(S)$ and effective irreducible roots work like 
${\rm Num}(Y)$ and $(-2)$-curves on an Enriques surface $Y$.  

Now we can state the main result of the paper.  We denote by $n$ the number of boundary components 
and by $k$ the number of effective irreducible roots on a Coble surface with finite automorphism group.
The following is the main result of this paper which claims the existence of only 9 isomorphism classes of such Coble surfaces in the 9-dimensional moduli space of Coble surfaces.

\begin{theorem}\label{mainth} 
Coble surfaces with finite automorphism group in characteristic $p\ne 2$ are classified as in the following 
Table $\ref{main}$.  Each type is unique. 
{\rm
\begin{table}[!htb]
\centering
\begin{tabular}{llllllll}
\hline
{\rm Type} & $p$ & $n$ & $k$  &{\rm Aut}(S) &  $R$-invariant  \\
\hline \hline
\rm{I} & any & 1 & 12 & ${\rm D}_8$ & $(E_8\oplus A_1, \{0\})$  \\ \hline
\rm{I} & any & 2 & 12 & ${\rm D}_8$ & $(E_8\oplus A_1^{\oplus 2}, {\bf Z}/2{\bf Z})$  \\ \hline
\rm{II} & any & 1 & 12 & $S_4$  & $(D_9, \{0\})$   \\  \hline
\rm{V} & 3 & 2 & 20  & $S_4 \times {\bf Z}/2{\bf Z}$ &    $(E_7\oplus A_2\oplus A_1^{\oplus 2}, ({\bf Z}/2{\bf Z})^2)$  \\  \hline
\rm{VI} & 5 & 1 & 20  & $S_5$ & $(E_6\oplus A_4, \{0\})$  \\  \hline
\rm{VI} & 3 & 5 & 20  & $S_5$ & $(E_6\oplus D_5, {\bf Z}/2{\bf Z})$  \\  \hline
\rm{VII} & 5 & 1 & 20  & $S_5$ & $(A_9\oplus A_1, {\bf Z}/2{\bf Z})$   \\  \hline
\rm{MI} & 3 & 2 & 40  & ${\rm Aut}(S_6)$ & $(A_5^{\oplus 2}\oplus A_1^{\oplus 2}, ({\bf Z}/2{\bf Z})^{3})$   \\  \hline
\rm{MII} & 3 & 8 & 40  & $(S_4\times S_4)\cdot {\bf Z}/2{\bf Z}$ & $(D_8\oplus A_2^{\oplus 2}, ({\bf Z}/2{\bf Z})^{2})$   \\  \hline
\end{tabular}
\caption{Coble surfaces with finite automorphism group ($p\ne 2$)}
\label{main}
\end{table}
}
\end{theorem}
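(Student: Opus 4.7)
The plan is to adapt the strategy that produced the Enriques classification of Table~\ref{Enriques} to the Coble setting, using the Coble--Mukai lattice $\mathrm{CM}(S)$ of rank~$10$ and hyperbolic signature $(1,9)$ as a substitute for $\mathrm{Num}(Y)$, and the effective irreducible roots as a substitute for $(-2)$-curves on an Enriques surface. First, one observes that the reflection group $W(S)$ generated by reflections in the effective irreducible roots acts on the positive cone of $\mathrm{CM}(S)\otimes\mathbb{R}$ with the nef cone as a fundamental chamber. By the same argument as in the Enriques case, $\mathrm{Aut}(S)$ is finite precisely when this chamber is a finite polytope in $\mathbb{H}^{9}$, equivalently when the set of effective irreducible roots is finite. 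Hence the problem reduces to listing those finite Coxeter--Vinberg polytopes in $\mathbb{H}^{9}$ that actually arise from a Coble surface, decorated by which vertices correspond to $(-2)$-curves and which to boundary roots $2E+\tfrac12 B_i+\tfrac12 B_j$.

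Next I would run Vinberg's algorithm on a suitable primitive vector of $\mathrm{CM}(S)$ and enumerate the candidate dual graphs $\Gamma$ whose reflection group has finite index in $\mathrm{O}(\mathrm{CM}(S))$. For each such candidate the incidences of the boundary components $B_1,\dots,B_n$ with the $(-1)$-curves entering the boundary roots must be consistent with $\Gamma$; this extra data determines the $R$-invariant listed in Table~\ref{main}. A productive viewpoint here is that a Coble surface is a degeneration of an Enriques surface in which several $(-2)$-curves of $Y$ are replaced by boundary components $B_i$ on $S$. This ``degeneration dictionary'' naturally produces the Coble analogues of Types~I, II, V, VI, VII from Table~\ref{Enriques}, explains the doubling seen for Type~I (namely $n=1$ versus $n=2$), and clarifies why Types~V, VI, VII survive only in specific characteristics $p=3$ or $5$.

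Existence would then be handled by constructing each surface explicitly. Types~I and~II in all characteristics are obtained by blowing up rational surfaces drawn from the Enriques theory and producing the prescribed boundary configuration; Types~V, VI, VII in characteristics~$3$ and~$5$ arise similarly from the corresponding Enriques surfaces that degenerate in those characteristics. Uniqueness is then established by passing to the $K3$ double cover $\pi\colon X\to S$ branched along $B_1+\cdots+B_n$ and applying a Torelli-type argument to the pair $(X,\iota)$, where $\iota$ is the covering involution; the automorphism group of $S$ is read off from the symmetries of $\Gamma$ that preserve the boundary labelling, cross-checked against the induced action on $\mathrm{CM}(S)$.

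The main obstacle, as I see it, will be the two sporadic Types~MI and~MII in characteristic~$3$, each with $k=40$ effective irreducible roots. These have no counterpart among the Enriques surfaces of Table~\ref{Enriques}, so they cannot be produced by the degeneration heuristic; their existence relies on exceptional symmetries of supersingular $K3$ surfaces available only in characteristic~$3$. The substantive work will consist in constructing the two surfaces explicitly, exhibiting their $40$ irreducible roots together with $n=2$ (respectively $n=8$) boundary components, computing the $R$-invariant, verifying that the automorphism groups are $\mathrm{Aut}(S_6)$ and $(S_4\times S_4)\cdot \mathbb{Z}/2\mathbb{Z}$, and, most delicately, confirming via the Vinberg enumeration that no further finite-type Coble surface has been overlooked.
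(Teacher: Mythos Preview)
Your high-level framework---replace $\mathrm{Num}(Y)$ by $\mathrm{CM}(S)$, $(-2)$-curves by effective irreducible roots, and look for finite-covolume nef cones---matches the paper. But the enumeration step has a genuine gap. Running Vinberg's algorithm on $\mathrm{CM}(S)\cong E_{10}$ produces the single Weyl chamber of the \emph{full} reflection group of $E_{10}$; it does not enumerate the possible nef cones $D(S)$, which are chambers for the proper subgroup $W(S)$ generated only by the \emph{effective} roots. There is no obvious lattice-theoretic algorithm that lists all root subsets of $E_{10}$ whose reflection group has finite index, and your proposal gives no termination or completeness argument for such a search.

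The paper's engine is geometric, not lattice-theoretic: if $\mathrm{Aut}(S)$ is finite, then the Jacobian of every genus one fibration on $S$ is \emph{extremal} (Proposition~\ref{Jacobian}), so its singular fibers come from the short list in Table~\ref{extremal}. One starts from a special fibration (guaranteed by Lemma~\ref{specialfibration}), records the roots in its fibers together with a special bi-section, and from these builds a new special fibration, which must again be extremal, yielding new roots. Iterating this ``fibration hopping'' (the method of \cite{Ko} and \cite{Martin}) forces the dual graph into one of the nine shapes; Vinberg's criterion (Theorem~\ref{Vinberg}) is used only as a stopping test to certify that the graph is complete, not as a search procedure. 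In particular, MI and MII are not discovered as sporadic add-ons: they fall out of exactly the fibration cases ($2\tilde A_2+2\tilde A_2+\tilde A_2+\tilde A_2$ and $\tilde A_5+2\tilde A_2+2\tilde A_1$) that Martin excluded for Enriques surfaces but which survive for Coble surfaces.

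Your uniqueness step is also off. A Torelli argument on the $K3$ cover is delicate in positive characteristic, where most of the interesting cases ($p=3,5$, supersingular covers) live, and the paper does not attempt it. Uniqueness is instead established case by case: for $n=1$ one cites Martin's results; for types I and V with $n=2$ one reconstructs the branch divisor on $\mathbf{P}^1\times\mathbf{P}^1$; for VI with $n=5$ one reduces to the uniqueness of the extremal rational elliptic surface with fibers $(\mathrm{I}_6,\mathrm{I}_3,\mathrm{III})$ in characteristic~$3$; for MI one uses that $\mathrm{PGU}_4(\mathbf{F}_9)$ acts transitively on pairs of skew lines on the Fermat quartic; for MII one uses the uniqueness of the supersingular elliptic curve in characteristic~$3$.
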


It is remarkable that Coble surfaces of type I, II appear as degenerations of 1-parameter families of
Enriques surfaces of type I, II, respectively, and Coble surfaces of type V, VI, VII appear as
reductions modulo $p$ of Enriques surfaces of the same type.  
In case of type ${\rm V}$, ${\rm VI}$, ${\rm VII}, {\rm MI}, {\rm MII}$, the covering $K3$ surface $X$ of $S$ is a supersingular $K3$ surface, that is, the Picard number of $X$ is 22 and it exists only in positive characteristic.  
The Coble surfaces of type MI, MII are suggested by Mukai and Ohashi \cite{MO2}.  There are several Enriques surfaces with infinite group of
automorphisms whose nef cones contain the same finite polytope as MI, MII (see Kond\=o \cite{Ko2}, Mukai and Ohashi \cite{MO1}).  In these Enriques surfaces, 
only a part of 40 roots are realized by $(-2)$-curves.

To prove the theorem we employ the method by the author \cite{Ko} and Martin \cite{Martin}.
Consider the natural map $\rho: {\rm Aut}(S) \to {\rm O}({\rm CM}(S))$ which has a finite kernel.  Moreover ${\rm Aut}(S)$ is finite 
if $[{\rm O}({\rm CM}(S)):W(S)]<\infty$ (Proposition \ref{FiniteIndex}).
It is necessary that any genus one fibration on 
$S$ is ``extremal''.  More precisely, a genus one  fibration on $S$ is not minimal in general.  By taking the minimal genus one fibration, it is extremal,
that is, the Mordell-Weil group of its Jacobian fibration is torsion (in case of Enriques surfaces this was first observed by Dolgachev \cite{Do}).
We call an effective irreducible root $r$ is called a special bi-section of a genus one fibration 
if $r\cdot F =2$ where $F$ is a general fiber of the fibration, 
and then the fibration is called special.
And if $S$ contains an effective irreducible root, then $S$ has a special genus one fibration (Lemma \ref{specialfibration}).
This is an analogue of Cossec's theorem in the case of Enriques surfaces (Cossec \cite{C}).

Now the proof goes as follows. Let $S$ be a Coble surface (or an Enriques surface) with finite automorphism group.  
Then any genus one fibration is extremal, and hence $S$ contains an irreducible
effective root.  
We take any special genus one fibration on a Coble surface $S$.
Together with a special bi-section, we can find a new special genus one fibration which 
should be extremal, and hence obtain new effective irreducible roots.
We continue this process to attain a dual graph of effective irreducible roots on $S$ 
whose reflection group
is of finite index in ${\rm O}({\rm CM}(S))$ (or ${\rm O}({\rm Num}(S))$ for Enriques surfaces) or the non-existence of such a surface.  
To check the last condition of the finiteness of the index 
we use a fundamental result (Theorem \ref{Vinberg}) due to Vinberg \cite{V}.  Finally we obtain the dual graphs of
type ${\rm I}, {\rm II}, \ldots , {\rm VII}$ in case of Enriques surfaces and those of type 
${\rm I}, {\rm II}, \ldots,  {\rm VII, MI, MII}$ in
case of Coble surfaces (Theorem \ref{dualgraph}).  
We remark that, as pointed out by Martin \cite[Lemma 10.12]{Martin}, 
there are no special elliptic fibrations on an Enriques surface with singular fibers of type 
${\rm I}_6$, $2{\rm I}_3$, $2{\rm I}_2$ and the one with four singular fibers of type 
$2{\rm I}_3, 2{\rm I}_3, {\rm I}_3, {\rm I}_3$, where $2{\rm I}_3$ means that the fiber of type ${\rm I}_3$ is multiple.
On the other hand the author \cite{Ko}
used some property of automorphisms of complex $K3$ surfaces to exclude these fibrations.
Allowing these two fibrations we obtain the dual graphs of type
MI and MII.  
In this process we use Martin's idea to use the theory of Mordell-Weil lattices (e.g. Sch\"utt and Shioda \cite{SS}).
For a special genus one fibration $p:S\to {\bf P}^1$ with a $(-2)$-curve as its bi-section,
one can construct the Jacobian fibration $j(p)$ of $p$ concretely.  
By applying the theory of Mordell-Weil lattices
one can determine the sections of $j(p)$ and their intersections with reducible fibers.
Thus we can find a new irreducible effective root on $S$ which is helpful to determine the dual graph
of effective irreducible roots.

In Section \ref{sec2} we recall genus one fibrations on a rational surface and Vinberg's theory of hyperbolic reflection groups, and in Section \ref{sec3}, we introduce the 
Coble-Mukai lattice and the $R$-invariant for a Coble surface.  We also discuss properties of genus one fibrations on a Coble surface.  
Section \ref{sec4} is devoted to give examples of Coble surfaces with finite automorphism group.  All examples are given in Dolgachev and Kond\=o \cite{DK}.
In Section \ref{sec5} we determine the dual graphs of effective irreducible roots on a Coble surface with finite automorphism group, and in Section \ref{sec6} the characteristic of the ground field and the number of boundary components of them.  Finally in Section \ref{sec7} we show the uniqueness of each Coble surface in Theorem \ref{mainth}.

\bigskip
\noindent
{\bf Acknowledgements.}
The author thanks Igor Dolgachev and Shigeru Mukai for
teaching him a beauty of Coble surfaces and for stimulating discussions and advices, and
the referee for the careful reading of the manuscript, for pointing out errors and
for many useful suggestions.

\section{Preliminaries}\label{sec2}

A lattice is a free abelian group $L$ of finite rank equipped with 
a non-degenerate symmetric integral bilinear form $\langle \cdot , \cdot \rangle : L \times L \to {\bf Z}$. 
For a lattice $L$ and an integer $m$, we denote by $L(m)$ the free ${\bf Z}$-module $L$ 
with the bilinear form obtained from the one of $L$ by multiplication by $m$. 
The signature of a lattice is the signature of the real vector space $L\otimes {\bf R}$ 
equipped with the symmetric bilinear form extended from the one on $L$ by linearity. A lattice is called even if 
$\langle x, x\rangle \in 2{\bf Z}$ 
for all $x\in L$. 
For an even lattice $L$, we denote by $L^*$ the dual of $L$.  An even lattice $L$ is called unimodular if $L^* \cong L$.
We denote by $L\oplus M$ the orthogonal direct sum of lattices $L$ and $M$, 
and by $L^{\oplus m}$ the orthogonal direct sum of $m$-copies of $L$.
We denote by $U$ the even unimodular lattice of signature $(1,1)$, 
and by $A_m, \ D_n$ or $E_k$ the even negative definite lattice defined by
the Cartan matrix of type $A_m, \ D_n$ or $E_k$ $(m\geq 1, n\geq 4, k=6,7,8)$ respectively.  
A root lattice is a negative definite lattice generated by $(-2)$-vectors. 
It is an orthogonal direct sum of $A_m, \ D_n,\ E_k$. 
We denote by $E_{10}$ an even unimodular lattice of signature $(1,9)$.
It is known that for an Enriques surface $Y$ the lattice ${\rm Num}(Y)$ is isomorphic to $E_{10}$
where ${\rm Num}(Y)$ is the quotient of the N\'eron-Severi group of $Y$ by its torsion subgroup.
Let ${\rm O}(L)$ be the orthogonal group of $L$, that is, the group of isomorphisms of $L$ preserving the bilinear form.
The maps
$$q_L: L^*/L \to {\bf Q}/2{\bf Z}, \quad b_L: L^*/L\times L^*/L \to {\bf Q}/{\bf Z}$$
defined by
$q_L(x \ {\rm mod}\ L) = x^2 \ {\rm mod}\ 2{\bf Z}$, 
$b_L(x \ {\rm mod}\ L, y \ {\rm mod}\ L) = \langle x, y\rangle \ {\rm mod}\ {\bf Z}$
are called the discriminant quadratic form, the discriminant bilinear form of $L$,
respectively.  

\begin{lemma}\label{overlattice}{\rm (Nikulin \cite[Proposition 1.4.1]{N})}
Let $L, L'$ be even lattices such that $L\subset L'$ and $L'/L$ is a finite group.
Then $L'/L (\subset L^*/L)$ is isotropic, that is, $q_L|(L'/L)=0$, and
 $q_{L'} = q_L|(L'/L)^{\perp}/(L'/L)$.
\end{lemma}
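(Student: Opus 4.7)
The plan is to work entirely inside the rational vector space $V := L \otimes \mathbb{Q} = L' \otimes \mathbb{Q}$ (these coincide because $L'/L$ is finite, so $L$ and $L'$ have the same rank), and to use the chain of inclusions
\[
L \subset L' \subset (L')^* \subset L^*
\]
induced by the bilinear form. The middle inclusion holds because $L'$ is even (so self-pairings are integral and $L' \subset (L')^*$), and the right-hand inclusion holds because any functional integral on $L'$ is \emph{a fortiori} integral on $L$. Inside $L^*/L$, the subgroup $L'/L$ is thus well defined, and this setup is all that is needed for the rest.

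First I would verify isotropy. For any $x \in L'$, evenness of $L'$ gives $\langle x, x\rangle \in 2\mathbb{Z}$, so $q_L(x \bmod L) = \langle x,x\rangle \bmod 2\mathbb{Z} = 0$; likewise, for $x,y \in L'$ one has $\langle x, y\rangle \in \mathbb{Z}$, so $b_L(x \bmod L, y \bmod L)=0$. Hence $L'/L$ is totally isotropic in $(L^*/L, q_L, b_L)$, which is the first half of the statement.

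Next I would identify the orthogonal complement of $L'/L$ in $L^*/L$ with respect to $b_L$. By definition, an element $x \bmod L \in L^*/L$ lies in $(L'/L)^{\perp}$ iff $\langle x, y\rangle \in \mathbb{Z}$ for every $y \in L'$, which is exactly the condition $x \in (L')^*$. Therefore
\[
(L'/L)^{\perp} \;=\; (L')^*/L,
\]
and the quotient is
\[
(L'/L)^{\perp}/(L'/L) \;=\; \big((L')^*/L\big)\big/(L'/L) \;\cong\; (L')^*/L',
\]
which is the underlying group of the discriminant form $q_{L'}$.

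Finally I would check that the induced form matches. The form $q_L$ restricted to $(L'/L)^\perp = (L')^*/L$ is given by $x \mapsto \langle x,x\rangle \bmod 2\mathbb{Z}$ for $x \in (L')^*$, and descending further modulo $L'$ yields the same value modulo $2\mathbb{Z}$ because $L'$ is even and $\langle x, L'\rangle \subset \mathbb{Z}$ (so any cross terms arising from changing the representative by an element of $L'$ lie in $2\mathbb{Z}$). This coincides exactly with the definition of $q_{L'}$ on $(L')^*/L'$, which completes the argument. The only real technical point — the one I would take the most care with — is the descent modulo $L'$ in this last step, since it is what forces the evenness hypothesis to be used twice.
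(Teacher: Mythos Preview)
Your proof is correct and is precisely the standard argument for this result. The paper does not give its own proof; it simply cites Nikulin \cite[Proposition 1.4.1]{N}, and your write-up is essentially Nikulin's argument in full detail.
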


We use Kodaira's notation for singular fibers of an elliptic fibration: 
$${\rm I}_n \ (n\geq 1),\ {\rm I}_n^* \ (n\geq 0),\ {\rm II},\ {\rm II}^*,\ {\rm III},\ {\rm III}^*,\ {\rm IV},\  {\rm IV}^*.$$
Also, as the notation of singular fibers, affine Dynkin diagrams $\tilde{A}_m, \tilde{D}_n, 
\tilde{E}_k$ will be used.  The type 
${\rm I}_n$ $(n\geq 2)$, ${\rm III}$, ${\rm IV}$,
${\rm I}^*_{n-4}$ $(n\geq 4)$, ${\rm II}^*$, ${\rm III}^*$ or ${\rm IV}^*$ 
corresponds to
$\tilde{A}_{n-1}$, $\tilde{A}_1$, $\tilde{A}_2$,
$\tilde{D}_{n}$, $\tilde{E}_8$, $\tilde{E}_7$ or $\tilde{E}_6$, respectively.

An elliptic fibration with a section is called a Jacobian fibration.  A Jacobian fibration is extremal
if its Mordell-Weil group is finite.
The following Table \ref{extremal} gives a classification of extremal rational elliptic surfaces in characteristic $p\ne 2$ (e.g. Martin \cite[Table 3]{Martin}).

\begin{table}[!htb]
\centering
\begin{tabular}{lllllll}
\hline
{\rm char}$(k)\ne 2, 3, 5$ & {\rm char}$(k)=5$ & {\rm char}$(k)=3$ \\
\hline \hline
$({\rm II}^*, {\rm II})$ & $({\rm II}^*, {\rm II})$ & $({\rm II}^*)$ \\ \hline
$({\rm III}^*, {\rm III})$ & $({\rm III}^*, {\rm III})$ & $({\rm III}^*, {\rm III})$  \\ \hline
$({\rm IV}^*, {\rm IV})$ & $({\rm IV}^*, {\rm IV})$ & -- \\  \hline
$({\rm I}_0^*, {\rm I}_0^*)$ & $({\rm I}_0^*, {\rm I}_0^*)$ & $({\rm I}_0^*, {\rm I}_0^*)$ \\  \hline
$({\rm II}^*, {\rm I}_1, {\rm I}_1)$ & $({\rm II}^*, {\rm I}_1, {\rm I}_1)$ & $({\rm II}^*, {\rm I}_1)$ \\  \hline
$({\rm III}^*, {\rm I}_2, {\rm I}_1)$ & $({\rm III}^*, {\rm I}_2, {\rm I}_1)$ & $({\rm III}^*, {\rm I}_2, {\rm I}_1)$ \\  \hline
$({\rm IV}^*, {\rm I}_3, {\rm I}_1)$ & $({\rm IV}^*, {\rm I}_3, {\rm I}_1)$ & $({\rm IV}^*, {\rm I}_3)$  \\  \hline
$({\rm I}_4^*, {\rm I}_1, {\rm I}_1)$ & $({\rm I}_4^*, {\rm I}_1, {\rm I}_1)$ & $({\rm I}_4^*, {\rm I}_1, {\rm I}_1)$ \\ \hline
$({\rm I}_2^*, {\rm I}_2, {\rm I}_2)$ & $({\rm I}_2^*, {\rm I}_2, {\rm I}_2)$  & $({\rm I}_2^*, {\rm I}_2, {\rm I}_2)$ \\ \hline 
$({\rm I}_1^*, {\rm I}_4, {\rm I}_1)$ & $({\rm I}_1^*, {\rm I}_4, {\rm I}_1)$ & $({\rm I}_1^*, {\rm I}_4, {\rm I}_1)$  \\  \hline
$({\rm I}_9, {\rm I}_1, {\rm I}_1, {\rm I}_1)$  & $({\rm I}_9, {\rm I}_1, {\rm I}_1, {\rm I}_1)$ & $({\rm I}_9, {\rm II})$ \\  \hline
$({\rm I}_8, {\rm I}_2, {\rm I}_1, {\rm I}_1)$  & $({\rm I}_8, {\rm I}_2, {\rm I}_1, {\rm I}_1)$ & $({\rm I}_8, {\rm I}_2, {\rm I}_1, {\rm I}_1)$ \\  \hline
$({\rm I}_6, {\rm I}_3, {\rm I}_2, {\rm I}_1)$  & $({\rm I}_6, {\rm I}_3, {\rm I}_2, {\rm I}_1)$ & $({\rm I}_6, {\rm I}_3, {\rm III})$ \\  \hline
$({\rm I}_5, {\rm I}_5, {\rm I}_1, {\rm I}_1)$  & $({\rm I}_5, {\rm I}_5, {\rm II})$ & $({\rm I}_5, {\rm I}_5, {\rm I}_1, {\rm I}_1)$ \\  \hline
$({\rm I}_4, {\rm I}_4, {\rm I}_2, {\rm I}_2)$  & $({\rm I}_4, {\rm I}_4, {\rm I}_2, {\rm I}_2)$ & $({\rm I}_4, {\rm I}_4, {\rm I}_2, {\rm I}_2)$ \\  \hline
$({\rm I}_3, {\rm I}_3, {\rm I}_3, {\rm I}_3)$  & $({\rm I}_3, {\rm I}_3, {\rm I}_3, {\rm I}_3)$ & -- \\  \hline
\end{tabular}
\caption{Extremal rational elliptic fibrations ($p\ne 2$)}
\label{extremal}
\end{table}

In characteristic 2 or 3, a quasi-elliptic fibration appears, that is, a fibration whose general fiber
is a rational curve with a cusp.   Any quasi-elliptic fibration on a rational surface is extremal and the reducible fibers are given as in the following Proposition.

\begin{proposition}\label{quasi-ell}{\rm (Ito \cite{Ito2})}
The reducible fibers of a rational quasi-elliptic surface in characteristic $3$ are
$({\rm II}^*),\ ({\rm IV}^*, {\rm IV})$ or $({\rm IV}, {\rm IV}, {\rm IV}, {\rm IV}).$
\end{proposition}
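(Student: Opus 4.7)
The plan is to combine the classification of admissible singular fiber types for a quasi-elliptic fibration in characteristic $3$ with a short Euler characteristic count.

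First I would invoke the classification of admissible fibers of a quasi-elliptic fibration in characteristic $3$ (due to Bombieri--Mumford, see also Ito): the only possible types are ${\rm II}$, ${\rm IV}$, ${\rm IV}^*$, and ${\rm II}^*$. The underlying reason is that the generic fiber is a cuspidal rational curve whose smooth locus is isomorphic to the additive group $\mathbb{G}_a$, so on each N\'eron fiber the component group must be a $3$-group; this rules out the Kodaira types ${\rm III}$, ${\rm III}^*$, ${\rm I}_n^*$, whose component groups carry $2$-torsion. Of the four surviving types, only ${\rm IV}$, ${\rm IV}^*$, ${\rm II}^*$ give reducible fibers, while type ${\rm II}$ is that of the generic fiber itself.

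Next I would apply Euler characteristic additivity for the fibration $f\colon S\to \mathbb{P}^1$. Since the generic fiber $F$ is a cuspidal rational curve with $e(F)=2$, and $e(S)=12$ because $S$ is rational,
\begin{equation*}
12 \;=\; e(\mathbb{P}^1)\cdot e(F) \;+\; \sum_i\bigl(e(F_i)-e(F)\bigr) \;=\; 4 \;+\; \sum_i\bigl(e(F_i)-2\bigr),
\end{equation*}
so $\sum_i(e(F_i)-2)=8$, where the sum runs over the fibers distinct from the generic one. The contributions are $0$ for type ${\rm II}$, $2$ for ${\rm IV}$, $6$ for ${\rm IV}^*$, and $8$ for ${\rm II}^*$.

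Finally I would enumerate the non-negative integer solutions of $2a+6b+8c=8$, which are exactly $(a,b,c)=(4,0,0)$, $(1,1,0)$, and $(0,0,1)$; these correspond to the reducible fiber collections $({\rm IV},{\rm IV},{\rm IV},{\rm IV})$, $({\rm IV}^*,{\rm IV})$, and $({\rm II}^*)$, yielding exactly the three configurations claimed. The main obstacle is the first step: ruling out the types ${\rm III}$, ${\rm III}^*$, ${\rm I}_n^*$ requires a nontrivial analysis of the N\'eron model (or of the formal group of the Jacobian) of the fibration in characteristic $3$; once that restriction is granted, the remaining enumeration is purely arithmetic.
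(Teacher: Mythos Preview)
The paper does not give its own proof of this proposition; it is quoted as a result of Ito \cite{Ito2}, so there is nothing to compare against directly. Your argument is sound and is in fact a standard way to recover this classification.

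A few remarks. Your heuristic for the fiber restriction is a bit compressed: besides the component-group constraint (which kills ${\rm III}$, ${\rm III}^*$, ${\rm I}_n^*$), one should also note that the identity component of every special N\'eron fiber must be $\mathbb{G}_a$, which is what excludes the multiplicative types ${\rm I}_n$. You do state the final list correctly and cite Bombieri--Mumford/Ito for it, so this is only a matter of exposition.

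Second, the Euler characteristic formula you use is genuinely delicate in positive characteristic because of possible Swan contributions; here it goes through because the generic fiber, a cuspidal rational curve, has vanishing $H^1$, so the higher direct images carry no wild ramification and the naive formula $e(S)=2e(\mathbb{P}^1)+\sum_i(e(F_i)-2)$ is exact. An alternative that sidesteps this entirely is the Shioda--Tate count: on a rational surface $\rho=10$, a rational quasi-elliptic surface has finite Mordell--Weil group, so $10=2+\sum_i(m_i-1)$, giving the same equation $2a+6b+8c=8$ via the number of components rather than Euler numbers. Either route yields exactly your three solutions.

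Finally, your argument gives only the \emph{possible} configurations; Ito's theorem also includes existence of each type. For the purposes of this paper that direction is all that is used, but if you want the full statement you would need to exhibit (or cite) an example of each.
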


We call an elliptic or a quasi-elliptic fibration a genus one fibration. 
If a genus one fibration on a surface has a multiple fiber with multiplicity 2, for example, 
a multiple fiber of type ${\rm III}$, then we call it a fiber of type $2{\rm III}$ or 
a fiber of type $2\tilde{A}_1$.

Finally we recall the theory of reflection groups in hyperbolic spaces, in particular,
Vinberg's result
which guarantees that a group generated by a finite number of reflections is
of finite index in the orthogonal group. 
Let $L$ be an even lattice of signature $(1,n)$.
Let $\Delta$ be a finite set of $(-2)$-vectors in $L$.
Let $\Gamma$ be the graph of $\Delta$, that is,
$\Delta$ is the set of vertices of $\Gamma$ and two vertices $\delta, \delta'\in \Delta$ are joined 
by $m$-tuple lines if $\langle \delta, \delta'\rangle=m$.
We assume that the cone
$$K(\Gamma) = \{ x \in L\otimes {\bf R} \ : \ \langle x, \delta \rangle \geq 0, \ \delta \in \Delta\}$$
is a strictly convex cone. Such a $\Gamma$ is called non-degenerate.
A connected parabolic subdiagram $\Gamma'$ of $\Gamma$ is an extended Dynkin diagram of 
type $\tilde{A}_m$, $\tilde{D}_n$ or $\tilde{E}_k$ (see Vinberg \cite[p. 345, Table 2]{V}).  
If the number of vertices of $\Gamma'$ is $r+1$, then $r$ is called the rank of $\Gamma'$.  
A disjoint union of connected parabolic subdiagrams is called a parabolic subdiagram of $\Gamma$.  
We denote by $\tilde{K_1}\oplus \tilde{K_2}$ a parabolic subdiagram which is a disjoint union of two
connected parabolic subdiagrams of type $\tilde{K_1}$ and $\tilde{K_2}$.
The rank of a parabolic subdiagram is the sum of 
the ranks of its connected components.  Note that the dual graph of reducible fibers 
of a genus one fibration gives a parabolic subdiagram.  
For example, a singular fiber of type ${\rm III}$, ${\rm IV}$ or ${\rm I}_{n+1}$ 
defines a parabolic subdiagram of type $\tilde{A}_1$, $\tilde{A}_2$ or 
$\tilde{A}_n$ respectively.  
We denote by $W(\Gamma)$ the subgroup of ${\rm O}(L)$ 
generated by reflections $s_\delta: x \to x +\langle x,\delta\rangle \delta$ 
associated with $\delta \in \Gamma$.

\begin{theorem}\label{Vinberg}{\rm (Vinberg \cite[Theorem 2.3]{V})}
Let $\Delta$ be a set of $(-2)$-vectors in a lattice $L$ of signature $(1,n)$ 
and let $\Gamma$ be the graph of $\Delta$.
Assume that $\Delta$ is a finite set, $\Gamma$ is non-degenerate and $\Gamma$ contains 
no $m$-tuple lines with $m \geq 3$.  Then $W(\Gamma)$ is of finite index in ${\rm O}(L)$ 
if and only if every connected parabolic subdiagram of $\Gamma$ is a connected component of some
parabolic subdiagram in $\Gamma$ of rank $n-1$ {\rm (}= the maximal one{\rm )}.
\end{theorem}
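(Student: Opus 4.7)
The plan is to translate the algebraic statement into the geometry of hyperbolic reflection groups and apply the standard finite-volume criterion for Coxeter polyhedra. First I would work in $V = L \otimes \mathbb{R}$ and use one component of the cone $\{x \in V : \langle x,x\rangle > 0\}$ to model hyperbolic space $\mathbb{H}^n$; the non-degeneracy of $\Gamma$ makes the projectivisation $P$ of $K(\Gamma)$ a non-empty convex polyhedron in $\mathbb{H}^n$ whose walls are the hyperplanes $\delta^\perp$ for $\delta \in \Delta$. The hypothesis that $\Gamma$ has no $m$-tuple lines with $m\geq 3$ ensures $|\langle \delta,\delta'\rangle| \leq 2$ for any two roots, so the corresponding walls either intersect at a dihedral angle $\pi/2$ or $\pi/3$ or meet at infinity; thus $P$ is a genuine Coxeter polyhedron and $W(\Gamma)$ acts on $\mathbb{H}^n$ with $P$ as fundamental domain. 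Since $\O(L)$ is a discrete subgroup of $\O(V)$ of finite covolume (by arithmeticity), the condition $[\O(L):W(\Gamma)]<\infty$ becomes equivalent to the single geometric statement that $P$ has finite hyperbolic volume.

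The next step is to set up the dictionary between ideal vertices of $P$ and parabolic subdiagrams of $\Gamma$. An ideal vertex of $P$ corresponds to a primitive isotropic vector $v \in \overline{K(\Gamma)}$, and the walls of $P$ passing through it are indexed by $\Gamma_v := \{\delta \in \Delta : \langle v,\delta\rangle = 0\}$. Because the induced form on $v^\perp/\mathbb{R}v$ is negative definite of rank $n-1$ with $v$ lying in the radical, the roots of $\Gamma_v$ span a negative semi-definite lattice and so $\Gamma_v$ is a parabolic subdiagram of $\Gamma$. Conversely, any connected parabolic subdiagram is, by the classification of extended Dynkin diagrams, the affinisation of a finite root system and has a one-dimensional radical generated by an isotropic vector $v_0$; the strict convexity of $K(\Gamma)$ lets one orient $v_0$ so that $\langle v_0, \delta\rangle \geq 0$ for every $\delta \in \Gamma$, giving a genuine boundary point of $P$, and any other parabolic subdiagram sharing this radical is contained in $\Gamma_{v_0}$.

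The heart of the argument is the local finite-volume criterion for Coxeter polyhedra: $P$ has finite volume if and only if at every ideal vertex $v \in \overline{P}$ the stabiliser of $v$ in $W(\Gamma)$ acts cocompactly on a horosphere about $v$. The stabiliser is generated by reflections in $\Gamma_v$ and acts by its natural crystallographic Euclidean action on the $(n-1)$-dimensional space $(v^\perp/\mathbb{R}v)\otimes \mathbb{R}$, and cocompactness of such a Euclidean reflection group is equivalent to its rank being the full $n-1$. Via the dictionary above this reads: for every connected parabolic subdiagram $\Gamma_0 \subset \Gamma$ with radical vector $v_0$, the maximal diagram $\Gamma_{v_0}$, of which $\Gamma_0$ is a connected component, has rank $n-1$, which is precisely the stated criterion.

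The main technical obstacle is the local finite-volume criterion together with the cocompactness claim for Euclidean reflection groups: one must verify that a non-maximal $\Gamma_{v}$ produces a genuine cusp of infinite volume, by passing to horospherical coordinates and comparing with the fundamental domain of the $\Gamma_v$-action on the horosphere. A secondary subtlety, on the ``only if'' direction, is to check that the isotropic radical of an arbitrary connected parabolic subdiagram really does lie in $\overline{K(\Gamma)}$ and not merely in its $\mathbb{R}$-span; this is exactly where strict convexity of the cone is essential. The remaining ingredients — classification of connected parabolic diagrams as $\tilde A, \tilde D, \tilde E$, and the arithmeticity of $\O(L)$ inside $\O(1,n)$ — are classical.
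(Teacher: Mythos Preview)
The paper does not prove this theorem at all; it is quoted verbatim from Vinberg \cite[Theorem 2.3]{V} and used as a black box throughout. So there is no ``paper's own proof'' to compare against.

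That said, your sketch is a faithful outline of Vinberg's original argument: pass to the hyperbolic model, recognise the projectivised cone $K(\Gamma)$ as a Coxeter polyhedron (the bound $|\langle\delta,\delta'\rangle|\leq 2$ gives dihedral angles in $\{\pi/2,\pi/3\}$ or walls parallel at infinity), and reduce finite index to finite hyperbolic volume via the arithmeticity of ${\rm O}(L)$. Your identification of ideal vertices with isotropic radicals of parabolic subdiagrams, and of the horospherical cocompactness condition with rank $n-1$, is exactly the mechanism in \cite{V}. One small point worth tightening: finite volume of a finite-sided Coxeter polyhedron is usually phrased combinatorially (every face of the closure is either an ordinary vertex or an ideal vertex whose link is a Euclidean Coxeter simplex product of full rank), rather than purely in terms of horosphere cocompactness, but the two formulations are equivalent and your version is adequate for a sketch.
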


\section{Coble surfaces and Coble-Mukai lattices}\label{sec3}

Let $S$ be a Coble surface and $B = B_1+\cdots+B_n$ its anti-bicanonical curve with boundary components $B_1,\ldots, B_n$. 
It is known that $n$ is at most $10$ (e.g. Dolgachev, Kond\=o \cite[Corollary 9.1.5]{DK}).
We denote by $\beta_i$ the divisor class of $B_i$. We have $\beta_i^2 = -4, K_S\cdot \beta_i = 2$. Let $\widetilde{{\rm Pic}}(S)$ be the ${\bf Z}$-submodule of the quadratic vector ${\bf Q}$-space ${\rm Pic}(S)_{\bf Q}$ generated by ${\rm Pic}(S)$ and 
${1 \over 2} \beta_1,\ldots, {1\over 2} \beta_n$. 
Let
$${\rm CM}(S) = \{x\in \widetilde{{\rm Pic}}(S)\ : \ x\cdot \beta_i = 0,\ i = 1,\ldots,n\}.$$
Then ${\rm CM}(S)$ is a lattice (Dolgachev and Kond\=o \cite[\S 9.2]{DK}).
We call ${\rm CM}(S)$ the Coble-Mukai lattice of $S$.
The following is known.

\begin{proposition}\label{CME10}{\rm (Dolgachev and Kond\=o \cite{DK})} 
Let $S$ be a Coble surface with $n$ boundary components.

{\rm (1)}  Assume that $n=1$ or $2$.  
Then ${\rm CM}(S)$ is isomorphic to $E_{10}$.

{\rm (2)} Assume that $k = {\bf C}$.  Then ${\rm CM}(S)$ is isomorphic to $E_{10}$.
\end{proposition}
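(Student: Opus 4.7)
The plan is to show that ${\rm CM}(S)$ is an even unimodular lattice of signature $(1,9)$; since $E_{10}$ is the unique even unimodular lattice of that signature, the isomorphism ${\rm CM}(S) \cong E_{10}$ follows. The rank and signature are immediate: from $(-2K_S)^2 = \sum_i \beta_i^2 = -4n$ we get $K_S^2 = -n$, so $b_2(S) = 10+n$ by Noether's formula, and the pairwise orthogonal negative-definite classes $\beta_1,\ldots,\beta_n$ cut out ${\rm CM}(S)$ as a rank-$10$ sublattice of signature $(1,9)$ by the Hodge index theorem. For evenness, any $x = y + \sum_i a_i\cdot {1\over 2}\beta_i \in {\rm CM}(S)$ with $y\in{\rm Pic}(S)$ satisfies $y\cdot\beta_j = 2a_j$ (orthogonality), and direct expansion gives $x^2 = y^2 + \sum a_i^2$. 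Using $2K_S = -\sum\beta_i$ one obtains $y\cdot K_S = -\sum a_i$, and combining with the Wu congruence $y^2 \equiv y\cdot K_S \pmod 2$ yields $x^2 \equiv \sum a_i(a_i-1) \equiv 0 \pmod 2$.

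It remains to verify unimodularity. For part (1), I would proceed by direct lattice computation. When $n=1$, observe that ${1\over 2}\beta_1 = -K_S \in {\rm Pic}(S)$, so $\widetilde{{\rm Pic}}(S) = {\rm Pic}(S) \cong I_{1,10}$ (the Picard lattice of a non-minimal smooth rational surface is odd unimodular); then ${\rm CM}(S) = K_S^\perp$, and since $K_S$ is primitive (as $K_S^2 = -1$ forces $m=1$ in any decomposition $K_S = m\xi$) with $K_S^2 = -1$, its orthogonal complement in $I_{1,10}$ is unimodular of rank $10$ and signature $(1,9)$, hence $\cong E_{10}$. When $n=2$, ${\rm Pic}(S) \cong I_{1,11}$ and $\beta_1+\beta_2 = -2K_S \in 2\,{\rm Pic}(S)$; defining $L' := \{y\in{\rm Pic}(S) : y\cdot\beta_2 = 0,\ y\cdot\beta_1\in 2{\bf Z}\}$, the map $y\mapsto y + {y\cdot\beta_1\over 2}\cdot {\beta_1\over 2}$ identifies ${\rm CM}(S) \cong L'/{\bf Z}\beta_1$ with the modified pairing $(y_1,y_2)\mapsto y_1\cdot y_2 + {(y_1\cdot\beta_1)(y_2\cdot\beta_1)\over 4}$, and a careful discriminant chase through ${\rm Pic}(S) \supset L' \supset {\bf Z}\beta_1$ shows $|{\rm disc}({\rm CM}(S))| = 1$.

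For part (2) with $k = {\bf C}$ and arbitrary $n$, I would use the $K3$ double cover $\pi: X \to S$ branched along $B$. The deck involution $\iota$ has fixed locus $E_1 + \cdots + E_n$ where $E_i := {1\over 2}\pi^*B_i$ is a $(-2)$-curve, and the assignment $y + \sum_i a_i\cdot {1\over 2}\beta_i \mapsto \pi^*y + \sum a_i E_i$ embeds $\widetilde{{\rm Pic}}(S) \hookrightarrow H^2(X,{\bf Z})^\iota$ while doubling intersection pairings. Under this embedding, $\pi^*({\rm CM}(S))$ is identified with the orthogonal complement of $\{E_1,\ldots,E_n\}$ inside $H^2(X,{\bf Z})^\iota$. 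Nikulin's classification of involutions on $K3$ surfaces whose fixed locus is a disjoint union of $n$ rational curves determines $H^2(X,{\bf Z})^\iota$ as an explicit $2$-elementary lattice with known invariants $(r,a,\delta)$; computing the orthogonal complement of $\langle E_1,\ldots,E_n\rangle$ inside it and descaling by $2$ from $\pi^*$ yields the even unimodular lattice of signature $(1,9)$, namely $E_{10}$.

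The main obstacle is the lattice bookkeeping. In part (1) for $n=2$, one must accurately track indices through ${\rm Pic}(S) \supset L' \supset {\bf Z}\beta_1$ and handle the non-integral extension to $\widetilde{{\rm Pic}}(S)$ together with the pairing correction on the quotient, where an additional complication is that one should distinguish whether $\beta_1$ is $2$-divisible in ${\rm Pic}(S)$ (i.e., whether $\widetilde{{\rm Pic}}(S)$ properly contains ${\rm Pic}(S)$). In part (2), the delicate step is invoking Nikulin's theorem with precisely the correct invariants for the prescribed fixed locus and propagating the $\pi^*$-scaling through the orthogonal complement computation to obtain unimodularity after descaling.
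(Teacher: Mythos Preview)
Your approach is correct and, for the part the paper actually proves, identical: the paper's proof handles $n=1$ by noting ${\rm CM}(S)=K_S^\perp$ (exactly your observation that $\tfrac{1}{2}\beta_1=-K_S$ forces $\widetilde{{\rm Pic}}(S)={\rm Pic}(S)$), and for $n=2$ and $k=\mathbf{C}$ it simply cites \cite[Example~9.2.5, Theorem~9.2.15]{DK} without further argument. Your sketches for those two cases---the discriminant chase for $n=2$ and the Nikulin-classification route for $k=\mathbf{C}$---are reasonable outlines of what one expects the cited proofs to contain, and you have correctly flagged the genuine bookkeeping hazards (the $2$-divisibility question for $\beta_1$ when $n=2$, and tracking the $\pi^*$-scaling through the orthogonal complement in the $K3$ cover).

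One small correction in your evenness computation: from $y\cdot K_S=-\sum a_i$ and the Wu congruence $y^2\equiv y\cdot K_S$ you get $x^2\equiv\sum a_i+\sum a_i^2=\sum a_i(a_i+1)\equiv 0\pmod 2$, not $\sum a_i(a_i-1)$; this makes no difference to the conclusion since both are even.
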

\begin{proof}
If $n = 1$, then the lattice coincides with $K_{S}^\perp$ and hence it is isomorphic to $E_{10}$.
For $n=2$, see Dolgachev and Kond\=o \cite[Example 9.2.5]{DK}, and for $k={\bf C}$, Dolgachev and Kond\=o \cite[Theorem 9.2.15]{DK}.
\end{proof}

We call an effective class $\alpha\in {\rm CM}(S)$ with $\alpha^2=-2$ an effective root. 
We say that an effective root $\alpha$ is irreducible if $|\alpha-\beta| = \emptyset$ for any other effective root $\beta$.
Since $|-2K_S| = \{B_1+\cdots+B_n\}$, the only curves with negative self-intersection on $S$ are the curves $B_1,\ldots,B_n$, $(-2)$-curves and $(-1)$-curves.
 
\begin{lemma}\label{roots}{\rm (Dolgachev and Kond\=o \cite[Lemma 9.2.1]{DK})} Let $\alpha$ be an effective irreducible root. Then $\alpha$ is either the divisor class of a $(-2)$-curve or the ${\bf Q}$-divisor class of an effective root of the form 
$2e+{1\over 2}\beta_j+{1\over 2} \beta_k$, where $e$ is the class of a $(-1)$-curve $E$ that intersects two different boundary components $B_j,B_k$. 
\end{lemma}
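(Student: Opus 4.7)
The plan is to put $\alpha$ into a canonical normal form and then analyze its integer divisor part by inspecting which negative-self-intersection curves can occur in its support. Since $\widetilde{\rm Pic}(S)/{\rm Pic}(S)$ is a $2$-torsion group generated by the classes $\tfrac{1}{2}\beta_i$, the coset of $\alpha$ picks out a subset $I\subseteq\{1,\ldots,n\}$, and one can write $\alpha=D+\tfrac{1}{2}\sum_{i\in I}\beta_i$ with $D$ an effective integral divisor whose support contains no $B_i$ (any $B_i$-component can be absorbed into the $\tfrac{1}{2}\beta_i$ term using $B_i=\beta_i\in{\rm Pic}(S)$, and the resulting $D$ is indeed effective because $\alpha$ is $\mathbf Q$-effective while having non-negative intersection with each $B_i$). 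The constraints $\alpha\cdot\beta_i=0$ and $\alpha^2=-2$, together with $-2K_S\sim\sum_jB_j$, then yield $D\cdot B_i=2$ for $i\in I$, $D\cdot B_i=0$ for $i\notin I$, $D^2=-2-|I|$, and $D\cdot K_S=-|I|$.

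When $|I|=0$, the relations $D\cdot K_S=0$ and $K_S\cdot C\leq 0$ for every component $C\neq B_j$ of $D$ (which follows from $-2K_S\sim\sum B_j$) force $K_S\cdot C=0$ throughout. Since $D^2<0$, some component must have negative self-intersection, and by the classification of such curves on $S$ this gives a $(-2)$-curve $R$ with $R\cdot B_i=0$, so $R\in{\rm CM}(S)$; then $R$ is itself an effective root, and irreducibility of $\alpha$ yields $D=R$. When $|I|\geq 1$ any $(-2)$-curve component of $D$ still lies in ${\rm CM}(S)$ automatically (as $K_S\cdot R=0$ combined with $-2K_S\sim\sum B_j$ forces each $R\cdot B_i=0$), so irreducibility again rules it out and some component must be a $(-1)$-curve $E$. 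The identity $-2K_S\cdot E=2$ then gives $\sum_{i\in I}E\cdot B_i=2$ and $E\cdot B_j=0$ for $j\notin I$, leaving two subcases: (i) $E\cdot B_j=E\cdot B_k=1$ for two distinct $j,k\in I$; or (ii) $E\cdot B_j=2$ for a single $j\in I$.

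In case (i), the class $\beta=2e+\tfrac{1}{2}\beta_j+\tfrac{1}{2}\beta_k$ lies in ${\rm CM}(S)$, satisfies $\beta^2=-2$, and is visibly effective, so it is itself an effective root of the claimed form. Irreducibility of $\alpha$ then demands $\alpha=\beta$, which forces $|I|=2$ and $D=2E$. For case (ii), and more generally to handle $|I|=1$ or $|I|\geq 3$, I plan to pass to $D':=D-E$ and iterate the dichotomy. The key numerics are $(D')^2=-1-|I|-2E\cdot D'$, $K_S\cdot D'=1-|I|$, and $D'\cdot B_i=D\cdot B_i-E\cdot B_i$. Each iteration either produces a $(-2)$-curve (splitting off an effective sub-root of $\alpha$, contradicting irreducibility) or produces a new $(-1)$-curve $E'$ whose required boundary budget $\sum_iE'\cdot B_i=-2K_S\cdot E'=2$ cannot be met by the residual $D'\cdot B_i$, eventually forcing a contradiction.

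The main obstacle is this iterative bookkeeping: at each stage one must show that either a genuine effective sub-root of $\alpha$ splits off, or the adjunction inequality $C^2\geq 0$ for the remaining components (no $(-1)$- or $(-2)$-curves being available) conflicts with $(D^{(l)})^2<0$, all while maintaining $E^{(l)}\cdot D^{(l+1)}\geq 0$. Once this is established, the only surviving configuration is case (i) with $|I|=2$ and $D=2E$, completing the dichotomy of the statement.
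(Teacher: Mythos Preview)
The paper itself does not supply a proof of this lemma; it is quoted from Dolgachev--Kond\=o \cite[Lemma 9.2.1]{DK}. So there is no ``paper's own proof'' to compare against, and I evaluate your outline on its own merits.

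Your normal form and numerics are correct, and the case $|I|=0$ is handled cleanly. The substantive gap is in case~(i). You assert that once the $(-1)$-curve $E$ with $E\cdot B_j=E\cdot B_k=1$ is found, ``irreducibility of $\alpha$ then demands $\alpha=\beta$''. But irreducibility only gives a contradiction when $\alpha-\beta$ is \emph{effective}, and
\[
\alpha-\beta=(D-2E)+\tfrac12\textstyle\sum_{i\in I\setminus\{j,k\}}\beta_i
\]
is effective only if $E$ occurs in $D$ with multiplicity at least $2$. Nothing in your argument forces this; a priori $E$ could appear with multiplicity $1$, and then your iteration does not obviously recover the situation either.

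The clean fix is to choose $E$ more carefully. Since $\alpha^2=-2<0$, $B_i\cdot\alpha=0$, and (for $|I|\geq 1$) no $(-2)$-curve lies in the support of $D$, there is an irreducible component $E$ of $D$ with $E\cdot\alpha<0$; this $E$ must be a $(-1)$-curve. From $E\cdot\alpha=E\cdot D+\tfrac12\sum_{i\in I}E\cdot B_i=E\cdot D+1<0$ one gets $E\cdot D\leq -2$, and writing $E\cdot D=-m+(D-mE)\cdot E$ with $(D-mE)\cdot E\geq 0$ yields $m\geq 2$. Now $D-2E$ is effective, so $\alpha-\beta$ is effective and irreducibility forces $\alpha=\beta$, hence $|I|=2$ and $D=2E$. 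The same multiplicity bound also kills case~(ii) outright: $m\cdot(E\cdot B_j)\leq D\cdot B_j=2$ together with $m\geq 2$ forces $E\cdot B_j\leq 1$, so $E\cdot B_j=2$ is impossible. In particular $|I|=1$ (which would land you in case~(ii)) cannot occur, and no iteration is needed.

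A smaller point: your normal form tacitly assumes the $B_i$-coefficient of an effective representative of $\alpha$ is at most $\tfrac12$. This deserves a line of justification (or an explicit appeal to whatever definition of ``effective'' in $\widetilde{\rm Pic}(S)$ you are using), but it is not the serious obstacle.
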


We call an effective irreducible root $\alpha$ a $(-2)$-root (or just $(-2)$-curve) 
if it is represented by
a $(-2)$-curve and a $(-1)$-root if it is represented by 
$2e+{1\over 2}\beta_j+{1\over 2} \beta_k$.

\begin{lemma}\label{roots2}
\begin{itemize}
\item[{\rm (1)}]
Two $(-1)$-roots have intersection number $1$ if and only if the associated $(-1)$-curves do not meet and the roots share precisely one boundary component.
\item[{\rm (2)}]
The intersection number of a $(-1)$-root and a $(-2)$-root is always even.
\end{itemize}
\end{lemma}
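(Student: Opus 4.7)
Both parts reduce to direct intersection-number calculations once a $(-1)$-root is described concretely, and the plan is to set up that description first and then expand the two required pairings.

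The initial step is to pin down how the $(-1)$-curve $E$ underlying a $(-1)$-root $\alpha = 2e + \tfrac{1}{2}\beta_j + \tfrac{1}{2}\beta_k$ meets the boundary. The membership $\alpha\in{\rm CM}(S)$, i.e.\ $\alpha\cdot\beta_i = 0$ for every $i$, together with $\beta_i^2 = -4$ and $\beta_i\cdot\beta_l = 0$ for $i\ne l$, forces $E\cdot B_j = E\cdot B_k = 1$ and $E\cdot B_i = 0$ for all other $i$. Thus $E$ meets the boundary only along $B_j\cup B_k$, each transversely in a single point, and the assignment $\alpha\mapsto E$ is injective on $(-1)$-roots.

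For part (1), I would take two distinct $(-1)$-roots $\alpha_i = 2e_i + \tfrac{1}{2}\beta_{j_i} + \tfrac{1}{2}\beta_{k_i}$ and set $s = |\{j_1,k_1\}\cap\{j_2,k_2\}|\in\{0,1,2\}$. Expanding the product and using the boundary intersection data from the previous step, the four cross terms $e_i\cdot\tfrac{1}{2}\beta_{j_{3-i}} + e_i\cdot\tfrac{1}{2}\beta_{k_{3-i}}$ each contribute $s$, while the $\tfrac{1}{4}\beta\beta$-block contributes $-s$, giving
\[
\alpha_1\cdot\alpha_2 \;=\; 4\,E_1\cdot E_2 + s.
\]
Since $\alpha_1\ne\alpha_2$ implies $E_1\ne E_2$ by injectivity, $E_1\cdot E_2\ge 0$, and so $\alpha_1\cdot\alpha_2 = 1$ is equivalent to $s = 1$ and $E_1\cdot E_2 = 0$. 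This is exactly the stated criterion.

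For part (2), if $c$ is the class of a $(-2)$-curve $C$ and $\alpha = 2e + \tfrac{1}{2}\beta_j + \tfrac{1}{2}\beta_k$ is a $(-1)$-root, then $c\in{\rm CM}(S)$ gives $C\cdot B_i = 0$ for every $i$, so the pairing collapses to $c\cdot\alpha = 2\,C\cdot E$, an even integer. There is no real obstacle here; the only step requiring attention is the orthogonality analysis in ${\rm CM}(S)$ used to rule out the possibility that $E$ meets a boundary component with multiplicity greater than one, or any boundary component outside $\{B_j,B_k\}$.
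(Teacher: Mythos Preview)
Your proof is correct and follows essentially the same route as the paper: a direct expansion of the pairing using the boundary intersection numbers of the underlying $(-1)$-curve, yielding the formula $\alpha_1\cdot\alpha_2 = 4\,E_1\cdot E_2 + s$ (which the paper writes as $4E\cdot E' + 4 - |\{B_1,B_2,B'_1,B'_2\}|$), and for part (2) the observation that a $(-2)$-curve is orthogonal to every $B_i$. Your version is slightly more explicit in two places---deriving $E\cdot B_j = E\cdot B_k = 1$ from the condition $\alpha\in{\rm CM}(S)$, and noting the injectivity $\alpha\mapsto E$ to justify $E_1\cdot E_2\ge 0$---but these are minor elaborations of the same argument rather than a different approach.
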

\begin{proof}
Let $\alpha = 2E + {1\over 2}B_1 + {1\over 2}B_2$, $\alpha' = 2E' + {1\over 2}B'_1 + {1\over 2}B'_2$ be $(-1)$-roots. Then 
$$\alpha\cdot \alpha'= 4E\cdot E' + 4 - |\{ B_1, B_2, B'_1, B'_2\}|.$$
Thus we have the assertion (1).  The second assertion follows from tha fact that
any $(-2)$-curve does not meet the boundary components. 
\end{proof}

Let ${\rm CM}(S)^+$ be the connected component of $\{ x\in {\rm CM}(S)\otimes {\bf R} \ : \ x^2 > 0\}$
containing a nef class.
Denote by $\overline{{\rm CM}(S)}^+$
the closure of ${\rm CM}(S)^+$ in ${\rm CM}(S)\otimes {\bf R}$.
Let $W(S)$ be the subgroup of ${\rm O}({\rm CM}(S))$ generated by all reflections $s_{\alpha}$, where 
$\alpha$ is an effective irreducible root. 
The group $W(S)$ acts naturally on $\overline{{\rm CM}(S)}^+$.
We denote by $D(S)$ the domain defined by
$$D(S)=\{ x\in \overline{{\rm CM}(S)}^+\ : \ x\cdot \alpha \geq 0 \ {\rm for \ any \ effective \ irreducible \ root} \ \alpha \}.$$
Obviously, the automorphism group ${\rm Aut}(S)$ leaves the set of curves 
$\{B_1,\ldots,B_n\}$ invariant and hence acts on the lattice ${\rm CM}(S)$ and on $D(S)$. 
The following propositions hold.

\begin{proposition}\label{NefCone}{\rm (Dolgachev and Kond\=o \cite[Proposition 9.2.2]{DK})}
Let $x \in \overline{{\rm CM}(S)}^+ \cap {\rm CM}(S)$.  Then $x$ is nef if and only if $x\in D(S)$.
In other word, the intersection of the nef cone of $S$ with $\overline{{\rm CM}(S)}^+$ is a fundamental domain of $W(S)$.
\end{proposition}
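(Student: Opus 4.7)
The plan is to prove the biconditional by a case analysis on irreducible curves of negative self-intersection, and then derive the fundamental domain statement from a standard minimization argument in the reflection group. The easy direction is immediate from Lemma \ref{roots}: if $x \in {\rm CM}(S)$ is nef and $\alpha$ is an effective irreducible root, then either $\alpha$ is a $(-2)$-curve (handled by nefness), or $\alpha = 2e + \tfrac{1}{2}\beta_j + \tfrac{1}{2}\beta_k$ with $E$ a $(-1)$-curve meeting $B_j$ and $B_k$ simply, in which case $x\cdot\alpha = 2\,x\cdot E$ because $x\cdot\beta_i = 0$ for all $i$ by the defining property of ${\rm CM}(S)$.

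For the reverse direction, let $x \in D(S)$ and let $C$ be an irreducible curve on $S$. If $C^{2}\geq 0$, the ${\bf Q}$-projection $C' = C + \sum_i \tfrac{C\cdot B_i}{4}\beta_i$ lies in ${\rm CM}(S)\otimes{\bf Q}$, is effective, and has nonnegative self-intersection; since $x\cdot C' = x\cdot C$, the reverse Schwarz inequality in the hyperbolic lattice ${\rm CM}(S)$ yields $x\cdot C \geq 0$. Otherwise $C$ is one of $B_i$ (for which $x\cdot C = 0$), a $(-2)$-curve (whose class is an irreducible root and is thus handled by hypothesis), or a $(-1)$-curve $E$. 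The relation $-2K_S \sim B_1 + \cdots + B_n$ combined with $E\cdot K_S = -1$ gives $\sum_i E\cdot B_i = 2$, so either $E$ meets two distinct boundary components $B_j$ and $B_k$ simply, in which case the associated $(-1)$-root $\alpha = 2e + \tfrac{1}{2}\beta_j + \tfrac{1}{2}\beta_k$ gives $2\,x\cdot E = x\cdot\alpha \geq 0$, or $E\cdot B_i = 2$ for a single $i$.

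The case $E\cdot B_i = 2$ is the principal obstacle, since no irreducible root directly tests $x\cdot E$. The fix is to consider the class $\gamma := e + \tfrac{1}{2}\beta_i$ in ${\rm CM}(S)\otimes{\bf Q}$; a direct check shows $\gamma\cdot\beta_j = 0$ for every $j$ and $\gamma^{2} = 0$, while $\gamma$ is effective as one half of the class of the effective divisor $2E + B_i$, so $\gamma \in \overline{{\rm CM}(S)}^+$. The reverse Schwarz inequality gives $x\cdot\gamma \geq 0$, and using $x\cdot\beta_i = 0$ this rewrites as $x\cdot E \geq 0$, completing the case analysis.

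For the fundamental domain assertion, fix an ample class $h$ in the interior of $D(S)$ and, given $y \in \overline{{\rm CM}(S)}^+$, pick a minimizer $y_0$ of the linear form $z \mapsto z\cdot h$ on the discrete orbit $W(S)\cdot y$. For every effective irreducible root $\alpha$, the inequality $s_\alpha(y_0)\cdot h \geq y_0\cdot h$ expands to $(y_0\cdot\alpha)(\alpha\cdot h)\leq 0$, and since $\alpha\cdot h > 0$ this forces $y_0\cdot\alpha \geq 0$, i.e.\ $y_0 \in D(S)$. Together with the preceding equivalence, this identifies $D(S)$ with the intersection of the nef cone with $\overline{{\rm CM}(S)}^+$ and exhibits it as a fundamental domain for the action of $W(S)$.
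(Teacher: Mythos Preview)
The paper does not supply its own proof here; it simply cites Dolgachev--Kond\=o \cite[Proposition~9.2.2]{DK}. Your argument is the natural self-contained one and is essentially correct, but two points need fixing.

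First, in the fundamental-domain paragraph you ``fix an ample class $h$ in the interior of $D(S)$.'' No ample class lies in ${\rm CM}(S)$: by definition every $h\in{\rm CM}(S)$ satisfies $h\cdot B_i=0$, whereas an ample class pairs positively with every curve. What you actually need is the orthogonal projection of an ample class $A$ to ${\rm CM}(S)\otimes{\bf Q}$, namely $h=A+\sum_i\tfrac{A\cdot B_i}{4}\beta_i$. Then $h^2=A^2+\tfrac14\sum_i(A\cdot B_i)^2>0$ and, for any effective nonzero $\alpha\in{\rm CM}(S)$, one has $h\cdot\alpha=A\cdot\alpha>0$; this is exactly what your inequality $(y_0\cdot\alpha)(\alpha\cdot h)\le 0$ requires.

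Second, you tacitly use that the class of every $(-2)$-curve, and every class $2e+\tfrac12\beta_j+\tfrac12\beta_k$ with $E$ a $(-1)$-curve meeting $B_j,B_k$ simply, is an \emph{irreducible} effective root. Lemma~\ref{roots} only gives one direction (irreducible roots have this shape), not the converse. The converse is true and can be checked on the $K3$ cover: for a $(-2)$-curve $C$ one has $\pi^*C=C^++C^-$ with $h^0(C^++C^-)=1$; for a $(-1)$-root $\alpha$ one has $\pi^*\alpha=2\tilde E+\tilde B_j+\tilde B_k$, and peeling off forced base components via negative intersections again gives $h^0=1$. In either case a decomposition $\alpha=\beta+\gamma$ in ${\rm CM}(S)$ with $\beta$ an effective root and $\gamma$ effective forces $\pi^*D_\beta$ to be a subdivisor of the unique effective representative with square $-4$; the only such subdivisor whose pushforward lies in ${\rm CM}(S)$ is the whole thing, whence $\beta=\alpha$. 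With these two clarifications your proof goes through.
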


\begin{proposition}\label{FiniteIndex}{\rm (Dolgachev and Kond\=o \cite[Theorem 9.8.1]{DK})}
Let $S$ be a Coble surface.  Suppose $W(S)$ is of finite index in ${\rm O}({\rm CM}(S))$.  Then
${\rm Aut}(S)$ is finite.
\end{proposition}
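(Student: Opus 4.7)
The plan is to analyze the natural representation $\rho\colon {\rm Aut}(S) \to {\rm O}({\rm CM}(S))$ and show that both its kernel and its image are finite, which forces ${\rm Aut}(S)$ itself to be finite.

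For the image: since ${\rm Aut}(S)$ preserves the nef cone of $S$ and permutes the boundary components, Proposition \ref{NefCone} shows that $\rho({\rm Aut}(S))$ stabilizes the fundamental domain $D(S)$ of $W(S)$. The key observation is that $W(S)$ acts simply transitively on the open Weyl chambers of the hyperplane arrangement cut out by effective irreducible roots; hence any $g \in {\rm O}({\rm CM}(S))$ sends $D(S)^{\circ}$ to $wD(S)^{\circ}$ for a unique $w \in W(S)$, and $g = w\cdot s$ for a uniquely determined $s \in {\rm Stab}_{{\rm O}({\rm CM}(S))}(D(S))$. This realizes ${\rm O}({\rm CM}(S))$ as an internal semidirect product $W(S) \rtimes {\rm Stab}(D(S))$, yielding
\[
{\rm O}({\rm CM}(S))/W(S) \xrightarrow{\ \sim\ } {\rm Stab}_{{\rm O}({\rm CM}(S))}(D(S)).
\]
The hypothesis $[{\rm O}({\rm CM}(S)) : W(S)] < \infty$ is therefore exactly the statement that this stabilizer, and hence $\rho({\rm Aut}(S))$, is finite.

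For the kernel: an element of $\ker(\rho)$ fixes the class of every effective irreducible root. Since the boundary components $B_i$ span the orthogonal complement of ${\rm CM}(S)$ in ${\rm Pic}(S)_{\bf Q}$ and are individually preserved as the connected components of the unique element of $|-2K_S|$, such an automorphism acts trivially on all of ${\rm Pic}(S)_{\bf Q}$. A standard rigidity argument for smooth projective rational surfaces --- available here because a Coble surface carries a big and nef class that is fixed and a rich finite configuration of $(-1)$- and $(-2)$-curves that is also fixed --- shows that the subgroup of ${\rm Aut}(S)$ acting trivially on ${\rm Pic}(S)$ is finite, so $\ker(\rho)$ is finite. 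Combining finiteness of the kernel with that of the image via $1 \to \ker(\rho) \to {\rm Aut}(S) \to \rho({\rm Aut}(S)) \to 1$ gives the conclusion.

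I expect the main obstacle to be precisely the kernel finiteness: the semidirect-product argument for the image is purely formal once the Coxeter-theoretic structure of $W(S)$ acting on $D(S)$ is in place, whereas bounding $\ker(\rho)$ is a genuinely geometric input that requires rigidifying any automorphism acting trivially on ${\rm Pic}(S)$ by means of a sufficiently rich fixed configuration of rational curves on $S$.
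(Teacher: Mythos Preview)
The paper does not include its own proof of this proposition; it is simply quoted from \cite[Theorem 9.8.1]{DK}, and the finiteness of $\ker\rho$ is asserted without proof already in the Introduction. Your two-step strategy (finite image, finite kernel) is the standard argument and is correct in outline.

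Two minor corrections are worth making. First, the semidirect product decomposition ${\rm O}({\rm CM}(S)) = W(S)\rtimes {\rm Stab}(D(S))$ is overclaimed: $W(S)$ is generated by reflections only in \emph{effective} irreducible roots, not in all $(-2)$-vectors, so an arbitrary element of ${\rm O}({\rm CM}(S))$ need not permute the $W(S)$-chambers. What you actually need, and what suffices, is the weaker fact $\rho({\rm Aut}(S))\cap W(S)=\{1\}$ (clear since $\rho({\rm Aut}(S))$ preserves the fundamental domain $D(S)$); this gives an injection of $\rho({\rm Aut}(S))$ into the left coset space ${\rm O}({\rm CM}(S))/W(S)$, hence $|\rho({\rm Aut}(S))|\le [{\rm O}({\rm CM}(S)):W(S)]<\infty$. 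Second, the $B_i$ are not ``individually preserved'' by an element of $\ker\rho$ but only permuted; since $S_n$ is finite this does not affect the conclusion.

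Your identification of the kernel step as the genuine geometric input is exactly right. The concrete argument is that any automorphism acting trivially on ${\rm Pic}(S)$ fixes every $(-1)$-curve (each being the unique effective divisor in its class), hence descends along any sequence of blow-downs to an automorphism of ${\bf P}^2$ fixing the $9+n\ge 10$ blown-up points; the condition $|-K_S|=\emptyset$ means these points lie on no plane cubic, which rules out their lying in the fixed locus (at most a line plus a point, or three points) of any nontrivial element of ${\rm PGL}_3$.
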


\begin{remark}\label{Vinbergremark}
To prove the finiteness of the index of $W(S)$ in ${\rm O}({\rm CM}(S))$, we use Theorem \ref{Vinberg}.  Note that $\Gamma$ as in Theorem \ref{Vinberg} is automatically non-degenerate if it contains effective irreducible roots appearing in the reducible fibers of an extremal genus one fibration on a Coble surface (or an Enriques surface) and a special bi-section of this fibration. Indeed, these roots generate 
${\rm CM}(S)\otimes {\bf Q}$ (or ${\rm Num}(Y)\otimes {\bf Q}$ for an Enriques surface $Y$) and hence $K(\Gamma)$ is strictly convex.
\end{remark}

By the same proof as in Kond\=o \cite[Proposition 1.10]{Ko}), we have the following.

\begin{proposition}\label{Vinberg2}
Let $S$ be a Coble surface with finite automorphism group and $\Gamma = \{ \alpha_1,\ldots, \alpha_s\}$ a set of mutually different effective irreducible roots on $S$.
Assume that the reflection subgroup $W(\Gamma)$ generated by $s_{\alpha_i}$$(i=1,\ldots, s)$ is of finite index in ${\rm O}({\rm CM}(S))$.  Then $\alpha_i$'s are the only effective irreducible roots on
$S$.
\end{proposition}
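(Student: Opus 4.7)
The plan is to show that the polytope
$$P = \bigl\{x\in\overline{{\rm CM}(S)}^+ \ :\ x\cdot \alpha_i\ge 0,\ i=1,\ldots,s\bigr\}$$
coincides with the nef cone $D(S)$. Granted this equality, the result follows because every effective irreducible root $\beta$ on $S$ defines a wall of $D(S)=P$, and the walls of $P$ are precisely the hyperplanes $\alpha_i^{\perp}$; since $\beta$ is effective, this forces $\beta=\alpha_i$ for some $i$.

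The inclusion $D(S)\subseteq P$ is immediate from Proposition \ref{NefCone}, since each $\alpha_i$ is an effective irreducible root, so any $x\in D(S)$ satisfies $x\cdot\alpha_i\ge 0$. Because distinct effective irreducible roots have non-negative intersection (automatic for two $(-2)$-curves; the remaining cases are covered by Lemma \ref{roots2}), the set $\Gamma$ forms a Coxeter system of simple roots. Combined with the finite-index hypothesis, Vinberg's criterion (Theorem \ref{Vinberg}) then guarantees that $P$ is a finite-volume Coxeter polytope in the hyperbolic $9$-space associated with ${\rm CM}(S)$, and that $P$ is a fundamental domain for $W(\Gamma)$.

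For the opposite inclusion, $D(S)$ is itself the fundamental chamber of $W(S)$ by Proposition \ref{NefCone}, and $W(\Gamma)\subseteq W(S)$ since each $s_{\alpha_i}$ lies in $W(S)$. Hence $P$ decomposes as a union of $W(S)$-chambers $P=\bigcup_{w} w\cdot D(S)$, indexed by coset representatives of $W(\Gamma)$ in $W(S)$, and the identity $D(S)=P$ is equivalent to $W(\Gamma)=W(S)$. The main step, and the main obstacle, is to establish this equality; we follow the argument of Kond\=o \cite[Proposition 1.10]{Ko} transposed to the Coble--Mukai setting. In brief: any additional effective irreducible root $\beta\notin\Gamma$ would yield a reflection $s_\beta\in W(S)\setminus W(\Gamma)$ whose mirror $\beta^\perp$ cuts the interior of the finite-volume Coxeter polytope $P$; the finiteness of ${\rm Aut}(S)$ together with the combinatorial constraints on the $W(S)$-subdivision of $P$ provided by Vinberg's theorem rules out this possibility, forcing $W(\Gamma)=W(S)$ and hence $D(S)=P$.
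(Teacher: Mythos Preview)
Your proposal is correct and follows exactly the route the paper takes: the paper's entire proof is the sentence ``By the same proof as in Kond\=o \cite[Proposition 1.10]{Ko}'', and you likewise reduce to that reference after correctly setting up the inclusion $D(S)\subseteq P$, identifying $P$ as a Coxeter fundamental domain for $W(\Gamma)$, and recasting the goal as $W(\Gamma)=W(S)$. Your closing one-line gloss on Kond\=o's argument is vague (it does not pin down the actual mechanism by which an extra root is excluded), but since the paper supplies no further detail either, your write-up is already at least as complete as the original.
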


Let $S$ be a Coble surface with boundary components $B_1,\ldots, B_n$.
Let $p:S\to {\bf P}^1$ be a genus one fibration on $S$ and let $F$ be a general fiber.
Then the adjunction formula implies that $F\cdot K_S=0$ and hence $F$ is disjoint from $B_1,\ldots, B_n$.
Thus $B_1,\ldots, B_n$ are components of some fibers.
By contracting $(-1)$-curves in fibers, we obtain a relatively minimal rational genus one fibration
$\bar{p}:H\to {\bf P}^1$.  It follows from the proof of Dolgachev and Kond\=o \cite[Proposition 9.1.4]{DK} that $\bar{p}$ is a Halphen surface of index 1 or 2, that is, the fibration $\bar{p}$ is obtained
from a pencil $|3mh - m(p_1+\cdots + p_9)|$ of curves of degree $3m$ on ${\bf P}^2$ 
with points $p_i$ of multiplicity $\geq m$ $(m=1$  or $2)$ where $h$ is a line on ${\bf P}^2$.  
A Halphen surface of index 1 is nothing but a Jacobian genus one fibration.  Moreover we have the following.

\begin{proposition}\label{Halphen}{\rm (Dolgachev and Kond\=o \cite[Proposition 9.1.4]{DK})}
Let $p:S\to {\bf P}^1$ be a genus one fibration on a Coble surface $S$.  
Then $S$ is obtained from a Halphen surface $\bar{p}$ of index $2$ by 
blowing up all singular points 
{\rm (}and their infinitely near points in the case of type ${\rm III, IV}${\rm )}
of one reduced singular fiber or from a Halphen surface of index $1$ by blowing up all singular points {\rm (}and their infinitely near points in the case of type ${\rm III, IV}${\rm )} of two reduced singular fibers $($see Figure {\rm \ref{Halsing}}$)$. 
\end{proposition}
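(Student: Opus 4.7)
The proof plan is to reduce to the relatively minimal model of $p$ and analyse the canonical class there. First, for a general fiber $F$ of $p$, the adjunction formula gives $F\cdot K_S=0$, so $F\cdot(B_1+\cdots+B_n)=-2\,F\cdot K_S=0$. Since $F$ is nef and each $B_i$ is irreducible with $B_i^2<0$, this forces each $B_i$ to be contained in some fiber of $p$. No $B_i$ is a $(-1)$-curve because $B_i^2=-4$, so one can successively contract $(-1)$-curves lying in fibers to obtain a birational morphism $\sigma:S\to H$ whose target carries a relatively minimal genus one fibration $\bar p:H\to\mathbb{P}^1$. The surface $H$ is rational with $\chi(\mathcal O_H)=1$ and $K_H^2=0$, so $\bar p$ is a rational (quasi-)elliptic surface; by the classical description, it is a Halphen surface of some index $m\geq 1$, i.e., arises from a pencil $|3mh-m(p_1+\cdots+p_9)|$ and satisfies $-K_H\sim F_0$ with $mF_0$ a fiber of $\bar p$.

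Second, I would write $K_S=\sigma^*K_H+D$, where $D$ is the $\sigma$-exceptional effective divisor coming from the tower of blow-ups, so that $-K_S\sim\sigma^*F_0-D$ and $-2K_S\sim\sigma^*(2F_0)-2D$. The Coble hypothesis $-2K_S=B_1+\cdots+B_n$ forces the $B_i$ to be proper transforms of components of certain singular fibers on $H$, blown up enough to drop their self-intersection to $-4$; and disjointness of the $B_i$ forces one to blow up every singular point, plus the necessary infinitely near point in types $\mathrm{III}$, $\mathrm{IV}$ (since two branches meeting tangentially must be completely separated).

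Third, I would rule out $m\geq 3$. If $m\geq 3$, then $|-K_H|=\{F_0\}$ consists of a single effective divisor, and $\sigma^*(2F_0)-2D$ being a sum of disjoint reduced $(-4)$-curves means $D$ must be supported on $\sigma^{-1}(F_0)$ and have multiplicities exactly half of those in $\sigma^*F_0$; but then $\sigma^*F_0-D$ is itself effective, contradicting $|-K_S|=\emptyset$. Hence $m\in\{1,2\}$. For $m=2$, the class $-2K_H\sim F$ is a single fiber class, so the only way to realize $-2K_S$ as a sum of disjoint $(-4)$-curves is to blow up all singular points of \emph{one} reduced singular fiber of $\bar p$, with the proper transforms of the components becoming $B_1,\dots,B_n$. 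For $m=1$, $|-K_H|=|F|$ is the pencil of fibers, so $|-K_S|=\emptyset$ forces the $\sigma$-exceptional locus to lie on at least two distinct fibers; component-by-component bookkeeping then shows one must blow up all singular points of exactly \emph{two} reduced singular fibers, yielding $-2K_S=\widetilde{F}_1+\widetilde{F}_2$ with the $B_i$ being the components of the two proper transforms.

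The main obstacle is the case-by-case verification in the last step: checking that, in types $\mathrm{III}$ and $\mathrm{IV}$, the infinitely near points really must also be blown up to achieve $B_i^2=-4$ and disjointness of the $B_i$, and that no other partial blow-up pattern or higher Halphen index is compatible with both $-2K_S=\sum B_i$ and $|-K_S|=\emptyset$. This combinatorial bookkeeping on the singular fiber types is precisely what is carried out in the proof of \cite[Proposition 9.1.4]{DK}.
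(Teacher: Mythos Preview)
Your approach is correct and is essentially the standard canonical--class bookkeeping that underlies the cited result \cite[Proposition~9.1.4]{DK}; the paper itself does not supply an independent proof but only quotes the statement.

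One point deserves tightening. In your step ruling out $m\ge 3$ you slide between linear equivalence and equality of divisors: from $\sum B_i \sim 2(\sigma^*F_0 - D)$ you cannot directly read off that $\sigma^*F_0 - D$ is effective. The clean argument is to push forward. Since each $B_i$ has $B_i^2=-4$ and $K_S\cdot B_i=2$, no $B_i$ ever becomes a $(-1)$-curve along the contraction, so $\sigma_*(\sum B_i)=\sum \sigma_*(B_i)$ is a \emph{reduced} effective divisor in $|-2K_H|$. For $m\ge 3$ one has $h^0(-2K_H)=h^0(2F_0)=1$ with unique member $2F_0$, which is non-reduced; contradiction. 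For $m=2$ the class $-2K_H$ is the fiber class, so $\sigma_*(\sum B_i)$ is a single reduced fiber and all $B_i$ come from one fiber. For $m=1$ one has $-2K_H\sim 2F$, so $\sigma_*(\sum B_i)=F_a+F_b$ with $F_a\neq F_b$ (again by reducedness), and your check that $|-K_S|=\emptyset$ forces blow-ups on at least two fibers shows one cannot do with fewer. This pushforward formulation also makes transparent why the blown-up fibers must be reduced (types ${\rm I}_n$, ${\rm II}$, ${\rm III}$, ${\rm IV}$) and why all singular points, including the infinitely near ones in types ${\rm III}$ and ${\rm IV}$, must be blown up to separate the $B_i$.
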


In the above the fiber of a Halphen surface blown up is of type ${\rm II}, {\rm III}, {\rm IV}$ or
${\rm I}_n$ $(n\geq 1)$.  We give more details of the process of blow-up.

\begin{lemma}\label{fibration} We keep the same notation as in the previous Proposition.
Let $F$ be a fiber of $p$ containing a boundary component.  Let  
$\tilde{F}$ be the inverse image of $F$ on the covering $K3$ surface $X$.
If $F$ is obtained by blowing up the singular points of a fiber $\bar{F}$ of $\bar{p}$ 
of type ${\rm II}, {\rm III}, {\rm IV}$ or ${\rm I}_{n}$, then 
$\tilde{F}$ is of type ${\rm IV}, {\rm I}_0^*, {\rm IV}^*$ or ${\rm I}_{2n} (n\geq 1)$,
 respectively.
\end{lemma}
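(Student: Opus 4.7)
The plan is a case-by-case analysis of the four types of $\bar{F}$, where in each case I (i) compute the divisor structure of $F$ on $S$ from the blow-up process of Proposition \ref{Halphen}, and (ii) compute $\pi^{-1}(F)$ component-by-component on the covering $K3$ surface $X$.

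For (i), starting from $\bar{F}$ on $\bar{H}$, I would perform the blow-ups prescribed in Proposition \ref{Halphen}: $n$ blow-ups (of the nodes) for $\text{I}_n$; one blow-up of the cusp for $\text{II}$; two blow-ups for $\text{III}$ (the tangent point and the induced triple point that is infinitely near); and four blow-ups for $\text{IV}$ (the triple point together with the three nodes infinitely near it on the first exceptional curve). In each case I would use $K_S = \sigma^* K_{\bar{H}} + \sum_j e_j$ together with the Halphen identity $-mK_{\bar{H}}\sim F^{\textup{gen}}$ (with $m=1$ or $2$) to check that $-2K_S = \sum B_i$ reduces to the strict transforms of the original components of $\bar{F}$ (supplemented, in type $\text{IV}$, by the strict transform $\tilde{E}_0$ of the first exceptional curve, which becomes a $(-4)$-curve after the three subsequent blow-ups). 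This would identify the boundary components and yield the multiplicities in $F$ on $S$, namely $F = \sum_i B_i + 2\sum_j E_j$ in type $\text{I}_n$; $F = B_1 + 2E$ in type $\text{II}$; $F = B_1 + B_2 + 2\tilde{E}_0 + 4E_1$ in type $\text{III}$; and $F = B_1 + B_2 + B_3 + 3B_4 + 4(E_1^{(1)} + E_1^{(2)} + E_1^{(3)})$ in type $\text{IV}$, with $B_4 = \tilde{E}_0$.

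For (ii), I would compute $\pi^{-1}(C)$ component by component. Every boundary satisfies $\pi^*B_i = 2B_i'$ with $B_i'$ a single $(-2)$-curve. A non-branch component $C \cong \mathbb{P}^1$ gives $\pi^{-1}(C)$ as the double cover of $C$ branched at $C \cap \sum B_i$: two transverse points yield a single irreducible $(-2)$-curve; a single tangential point (as for the exceptional $E$ in type $\text{II}$) yields two $(-2)$-curves meeting at the unique lift of the tangency; and being disjoint from the branch (as for $\tilde{E}_0$ in type $\text{III}$) yields two disjoint $(-2)$-curves via the trivial étale double cover of $\mathbb{P}^1$. Intersections are then read off from the projection formula $B_i' \cdot D' = B_i \cdot D$ and its analogues. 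Reading off the multiplicities of the components in $\tilde{F} = \tfrac{1}{2}\pi^*F$ and comparing the resulting dual graph with Kodaira's list gives $\tilde{F}$ of type $\text{I}_{2n}, \text{IV}, \text{I}_0^*, \text{IV}^*$ respectively.

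The most delicate point is the tangential splitting in type $\text{II}$: the exceptional $E$ and the boundary $B_1$ meet with multiplicity $2$ at a single point, which forces $\pi^{-1}(E)$ to break as a pair of $(-2)$-curves that meet each other and $B_1'$ at the common lift of the tangent point, producing the triangle configuration with all three intersections concentrated at one point, i.e.\ type $\text{IV}$. One must carefully distinguish this from the superficially similar $\text{I}_1$ subcase, where the exceptional meets $B_1$ transversally at two distinct points and thus has an irreducible preimage, producing type $\text{I}_2$. Keeping track of these splittings, and correctly identifying which strict transforms are boundaries (notably $\tilde{E}_0$ in type $\text{IV}$, which is a $(-4)$-curve that appears with multiplicity $3$ in $F$ and becomes the central component of multiplicity $3$ in $\tilde{E}_6 = \text{IV}^*$), is the heart of the computation.
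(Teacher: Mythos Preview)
Your proposal is correct and follows essentially the same approach as the paper: a case-by-case description of the blow-up $F$ on $S$, followed by passing to the double cover. The paper is terser---it records only the resulting curves and their self-intersections (the configurations in Figure~\ref{Halsing}) and then declares the determination of $\tilde F$ ``obvious''---whereas you additionally track the multiplicities in $F$ via $\sigma^*\bar F$ and verify the identification of the boundary components through $-2K_S$, which is a welcome level of detail (in particular your observation that the first exceptional $\tilde E_0$ in type~IV becomes a fourth boundary component of multiplicity~$3$, landing as the central vertex of $\tilde E_6$, is exactly right and is only implicit in the paper's figure).
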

\begin{proof} 
If $\bar{F}$ is a fiber of type ${\rm II}$, then by blowing up the cusp we obtain $F$ 
consisting of a $(-4)$-curve and a $(-1)$-curve meeting at a point with multiplicity 2.  
If $\bar{F}$ is of type 
${\rm III}$, then by blowing up the singular point and its infinitely near point, we obtain $F$ consisting of two $(-4)$-curves $B, B'$, a $(-1)$-curve and a $(-2)$-curve $E$ as in the following Figure \ref{Halsing}, (A):

\begin{figure}[htbp]
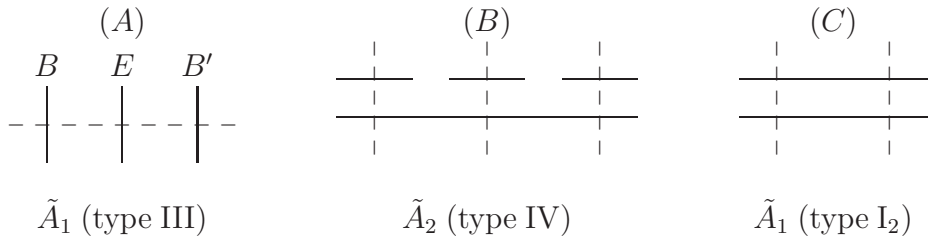

\begin{center}
$
\begin{array}{ccc}
 \hspace{0cm} (A) &  \hspace{1cm} (B) & \hspace{1cm} (C) \\
\xy
(5,0)*{};(35,0)*{}**\dir{--};
(10,-5)*{};(10,5)*{}**\dir{-};
(20,-5)*{};(20,5)*{}**\dir{-};
(30,-5)*{};(30,5)*{}**\dir{-};
(10,8)*{B};(20,8)*{E};(30,8)*{B'};
\endxy 

 & 
 \hspace{1cm}
 
\xy
(5,0)*{};(45,0)*{}**\dir{-};
(10,-5)*{};(10,10)*{}**\dir{--};
(25,-5)*{};(25,10)*{}**\dir{--};
(40,-5)*{};(40,10)*{}**\dir{--};
(20,5)*{};(30,5)*{}**\dir{-};
(5,5)*{};(15,5)*{}**\dir{-};
(35,5)*{};(45,5)*{}**\dir{-};
\endxy 

&
 \hspace{1cm}

\xy
(5,0)*{};(30,0)*{}**\dir{-};
(10,-5)*{};(10,10)*{}**\dir{--};
(25,-5)*{};(25,10)*{}**\dir{--};
(5,5)*{};(30,5)*{}**\dir{-};
\endxy 

  \\
  \hspace{-0cm} {} &  \hspace{0cm} {} &  \hspace{1cm} {} 
 \\
  \hspace{-0cm} \tilde{A}_1 \ ({\rm type \  III}) &  \hspace{1cm} \tilde{A}_2\ ({\rm type \ IV}) & \hspace{1cm} \tilde{A}_1\ ({\rm type \ I_2})

% \hspace{3cm}

\end{array}
$
\caption{Blowing-up the singular points of a fiber}
\label{Halsing}
\end{center}
\end{figure}
\noindent
In Figure \ref{Halsing}, the dotted lines are $(-1)$-curves.
If $\bar{F}$ is of type 
${\rm IV}$, then by blowing up the singular point and then three intersection points of the exceptional curve and the proper transforms of the components of $\bar{F}$, we obtain $F$ consisting of four $(-4)$-curves 
and three $(-1)$-curves as in Figure \ref{Halsing}, (B).
Finally if $\bar{F}$ is of type ${\rm I}_n$, then by blowing up all singular points, 
we obtain $F$ consisting of $n$ $(-4)$-curves and $n$ $(-1)$-curves as in Figure \ref{Halsing}, (C).  Now the assertion is obvious.

\end{proof}

The following Proposition explains the geometric meaning of Theorem \ref{Vinberg}.

\begin{proposition}\label{Jacobian}{\rm (Dolgachev and Kond\=o \cite[Proposition 9.8.2]{DK})}
Assume that ${\rm Aut}(S)$ is finite.
Then the Jacobian fibration of $\bar{p}$ is extremal.
\end{proposition}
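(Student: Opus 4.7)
The plan is to prove the contrapositive: assuming the Jacobian fibration $j(\bar{p})$ is not extremal, I will construct an infinite order automorphism of $S$, contradicting the hypothesis. First, by Proposition \ref{quasi-ell}, every quasi-elliptic fibration on a rational surface is extremal, so I may assume $\bar{p}$ is a genuine elliptic fibration. If $j(\bar{p})$ is not extremal, then its Mordell--Weil group has positive rank and hence contains a section $\sigma$ of infinite order.

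The key step is to convert $\sigma$ into an infinite order automorphism of $S$. Since the generic fiber of $\bar{p}$ is a torsor under the generic fiber of $j(\bar{p})$, translation by $\sigma$ yields an automorphism $t_\sigma$ of infinite order on the generic fiber of $\bar{p}$. Because $\bar{p}: H \to {\bf P}^1$ is relatively minimal, $t_\sigma$ extends uniquely to a biregular automorphism of $H$ preserving the fibration $\bar{p}$. I then have to lift this automorphism to $S$. By Proposition \ref{Halphen}, $S$ is obtained from $H$ by blowing up all singular points (together with infinitely near points for fibers of type ${\rm III}$ or ${\rm IV}$) of either one or two reduced singular fibers of $\bar{p}$. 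Since $t_\sigma$ is a fiberwise translation, it preserves each fiber as a set, and its restriction to any singular fiber must permute the intrinsic set of singular points of that fiber. Thus the entire center of the blow-up $S \to H$ is preserved by $t_\sigma$, so $t_\sigma$ lifts to an automorphism $\phi$ of $S$. Since $S \to H$ is birational, $\phi$ inherits the infinite order of $t_\sigma$, contradicting the finiteness of ${\rm Aut}(S)$.

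The main obstacle will be verifying the lifting argument in the cases of singular fibers of type ${\rm III}$ and ${\rm IV}$, where the blow-up involves also the infinitely near points of the singular points. One must check that $t_\sigma$ acts compatibly on the tangent directions at those singular points; this should go through automatically because these infinitely near points are determined intrinsically by the local structure of the fiber, which is preserved by the fiber automorphism induced by $t_\sigma$. The cases of fibers of type ${\rm II}$ or ${\rm I}_n$ are straightforward since there one only blows up the singular points themselves, which are clearly preserved. A secondary subtlety is the passage from the abstract section $\sigma$ of $j(\bar{p})$ to a biregular automorphism of $H$, but this is a standard property of relatively minimal elliptic surfaces.
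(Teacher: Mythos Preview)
The paper does not supply its own proof of this proposition; it simply cites it from Dolgachev--Kond\=o \cite[Proposition 9.8.2]{DK}. Your argument is correct and is the standard one for statements of this kind: an infinite-order Mordell--Weil translation on the relatively minimal Halphen model $H$ lifts through the blow-up $S\to H$ because the blow-up center is intrinsic to the singular fibers and hence preserved by any fiberwise automorphism.

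The verification you flag for types ${\rm III}$ and ${\rm IV}$ goes through exactly as you indicate. For type ${\rm III}$, after the first blow-up the two proper transforms and the exceptional curve all pass through a single point (the common tangent direction), which is therefore fixed by any automorphism of the fiber even if the two components are swapped; this is the unique infinitely near point to be blown up. For type ${\rm IV}$, translation permutes the three components and hence permutes the three points where their proper transforms meet the first exceptional curve, so the set of three infinitely near centers is preserved. The cases ${\rm II}$ and ${\rm I}_n$ are immediate, as you say. The extension of $t_\sigma$ from the generic fiber to a biregular automorphism of $H$ is indeed the standard fact for relatively minimal genus one fibrations.
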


A vector $f\in D(S)\cap {\rm CM}(S)$ is called isotropic if $f^2=0$ and primitive
if $f=mf'$, $m \in {\bf Z}$, $f'\in {\rm CM}(S)$ implies $m=\pm 1$.

\begin{lemma}\label{fibration-isotropic}
The set of genus one fibration on a Coble surface $S$ bijectively corresponds to the set 
of primitive isotropic vectors in $D(S)\cap {\rm CM}(S)$.  
\end{lemma}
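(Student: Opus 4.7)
The natural map sends a genus one fibration $p : S \to \mathbb{P}^1$ to the primitive class $f \in \mathrm{CM}(S)$ such that a general fiber $F$ of $p$ satisfies $[F] = mf$ for some positive integer $m$. My first task is to check that this is well-defined. Let $F$ be a general fiber. Genus-one adjunction gives $F^2 = F \cdot K_S = 0$. Because $-2K_S = \beta_1 + \cdots + \beta_n$ in $\mathrm{Pic}(S)$ and $F$ is nef while each $B_i$ is irreducible and does not appear in the general fiber, we have $F \cdot \beta_i \geq 0$, and $\sum_i F \cdot \beta_i = -2 F \cdot K_S = 0$ forces $F \cdot \beta_i = 0$ for every $i$. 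Thus $F \in \mathrm{CM}(S)$, and since $F$ is nef and isotropic, Proposition \ref{NefCone} places $F$ in $D(S)$. The primitive positive multiple of $F$ inside the rank-$10$ lattice $\mathrm{CM}(S)$ is the sought-after $f$, and different fibrations evidently produce different $f$'s, since a fibration is recovered from its fiber class as the Stein factorization of the map defined by $|F|$.

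For the inverse direction I start with a primitive isotropic $f \in D(S) \cap \mathrm{CM}(S)$. Since $\widetilde{\mathrm{Pic}}(S) = \mathrm{Pic}(S) + \sum_i \mathbb{Z} \cdot \tfrac{1}{2}\beta_i$, the class $2f$ lies in $\mathrm{Pic}(S)$; moreover $f \cdot K_S = -\tfrac12 \sum_i f \cdot \beta_i = 0$. Riemann--Roch on the rational surface $S$ gives $\chi(2f) = 1 + \tfrac12(2f)(2f - K_S) = 1$. Since $f$ is nef and nonzero and $|-K_S| = \emptyset$, the class $K_S - 2f$ cannot be effective, so $h^2(2f) = h^0(K_S - 2f) = 0$ and therefore $h^0(2f) \geq 1$. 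The delicate step is upgrading this to $h^0(2f) \geq 2$ and arranging that the moving part of $|2f|$ defines a surjection onto a curve. Granted this, since $(2f)^2 = 0$ the image has dimension one, and Stein factorization produces a fibration $p : S \to \mathbb{P}^1$ whose general fiber has arithmetic genus $1 + \tfrac12 (2f)(2f + K_S) = 1$. By construction the primitive class attached to $p$ is $f$, which closes the bijection.

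The main obstacle is the existence step just indicated: showing that $|2f|$ actually moves and that its moving part is not constant along some base locus that would prevent a genus one fibration from appearing. The cleanest route is to invoke Proposition \ref{Halphen}: contract the appropriate $(-1)$-curves to reach a Halphen model $H$, where the Halphen pencil is visible as a genuine $\mathbb{P}^1$-family, and then pull the pencil back to $S$. Alternatively, one adapts the argument from the Enriques-surface setting, using Lemma \ref{roots} to constrain possible fixed components of $|2f|$ and the nefness of $f$ via Proposition \ref{NefCone} to rule them all out. Either route is significantly more involved than the cohomological setup above.
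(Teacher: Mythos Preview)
Your forward direction (fibration $\to$ primitive isotropic vector) is fine and essentially matches the paper. The problem is the inverse direction. You correctly isolate the hard step---showing that $|2f|$ moves and that its moving part defines a genus one pencil---but neither of your proposed resolutions works as written. Proposition~\ref{Halphen} takes as \emph{input} a genus one fibration on $S$ and describes its relatively minimal model; invoking it to \emph{produce} a fibration from an isotropic class is circular unless you first explain, independently of any fibration, which $(-1)$-curves to contract and why the resulting surface carries a Halphen pencil in the class you want. The alternative of ``adapting the Enriques argument'' via Lemma~\ref{roots} and Proposition~\ref{NefCone} is too vague: controlling the fixed part of $|2f|$ on a rational surface with several $(-4)$-curves and $(-1)$-curves is feasible but not routine, and you have not supplied the analysis.

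The paper avoids all of this by passing to the K3 double cover $\pi:X\to S$ branched along $B_1+\cdots+B_n$. Since $f\in D(S)$, the class $f$ is nef (Proposition~\ref{NefCone}), so $\pi^*(f)$ is a nef isotropic class in ${\rm Pic}(X)$ (note that $\pi^*(\tfrac12\beta_i)=\tilde{B}_i$ is integral). On a K3 surface it is standard that a nef primitive isotropic class is the fiber class of a genus one fibration $\tilde{p}:X\to\mathbb{P}^1$. Because $\pi^*(f)$ is invariant under the covering involution, $\tilde{p}$ descends to a genus one fibration $p:S\to\mathbb{P}^1$ whose fiber class recovers $f$. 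This two-line argument replaces the Riemann--Roch and fixed-component analysis entirely; you should use the K3 cover rather than work directly on $S$.
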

\begin{proof}
Obviously the class of a fiber of a genus one fibration gives an  
isotropic vector in $D(S)\cap {\rm CM}(S)$.  
Conversely let $\pi : X\to S$ be the double covering of $S$ branched 
along $B_1+\cdots + B_n$.
Let $f\in D(S)\cap {\rm CM}(S)$ be a primitive isotropic vector.  Then $\pi^*(f)$ is nef (Proposition \ref{NefCone}) and isotropic, and hence $\pi^*(f)$ defines a genus one fibration $\tilde{p}:X\to {\bf P}^1$ on the $K3$ surface $X$.  The fibration $\tilde{p}$ induces a genus one fibration $p:S\to {\bf P}^1$.
\end{proof}

Let $\pi : X\to S$ be the double covering of $S$ branched 
along $B_1+\cdots + B_n$ with covering involution $\sigma$.
Let $f : S \to {\bf P}^1$ be a genus one fibration and let $\tilde{f}: X \to {\bf P}^1$ 
be the pull back of $f$.  Then $\sigma$ acts non-trivially on the base of $\tilde{f}$ by the same proof of \cite[Proposition 9.34]{Ko3}.
It now follows from Proposition \ref{Halphen} that 
there are exactly two $\sigma$-invariant fibers $\tilde{F}_1, \tilde{F}_2$ 
of $\tilde{f}$, and the boundary components are contained in $F_1$ or $F_2$
(In case that $f$ is obtained from a Halphen surface of index 2
(resp. index 1), $F_1, F_2$ are the multiple fiber and the fiber containing the boundary components (resp. the two fibers containing the boundary components)).
There are two possibilities: one of $F_1, F_2$ contains all $B_1,\ldots, B_n$ or not.

An effective irreducible root $r$ is called a special bi-section of 
a genus one fibration $p:S\to {\bf P}^1$
if $r\cdot F=2$ where $F$ is a general fiber of $p$.  In this case the fibration $p$ is called special.  In case of Enriques surfaces, the following is known.

\begin{proposition}\label{Cossec}{\rm (Cossec \cite[Theorem 4]{C})}
If an Enriques surface $Y$ contains a $(-2)$-curve, then there exists a special elliptic fibration on $Y$.
\end{proposition}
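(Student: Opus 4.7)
The plan is to combine the classical existence of at least one genus one fibration on every Enriques surface with a lattice-theoretic manipulation in ${\rm Num}(Y) \cong E_{10}$. Every Enriques surface carries a genus one fibration: using the decomposition $E_{10} \cong U \oplus E_8$, one produces a primitive isotropic vector, and by the Enriques analog of Proposition \ref{NefCone} (the nef cone is a fundamental domain for the reflection group generated by $(-2)$-curves) it can be moved into the nef cone. By a standard Riemann--Roch argument on $Y$, any such nef primitive isotropic class $F \in {\rm Num}(Y)$ defines a genus one fibration $p\colon Y \to {\bf P}^1$ of half-fiber class $F$ (the Enriques version of Lemma \ref{fibration-isotropic}).

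Given the $(-2)$-curve $R$, set $d := F \cdot R \geq 0$. If $d = 1$, then the general fiber $2F$ of $p$ satisfies $R \cdot (2F) = 2$, so $R$ is already a special bi-section and $p$ is special. Otherwise, the central step is to produce a primitive isotropic class $F' \in {\rm Num}(Y)$ with $F' \cdot R = 1$. This is an elementary exercise in the arithmetic of $E_{10}$: fixing any $(-2)$-vector $R$, one writes down an explicit primitive isotropic $F'$ with $F' \cdot R = 1$ using the $U$-summand together with the root system $E_8$, for instance by first normalizing $R$ within its $\mathrm{O}(E_{10})$-orbit and then using the $U$-basis to adjust the intersection with $R$.

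To realize $F'$ geometrically, one uses that $W(Y)$ acts with the nef cone as a fundamental domain on the positive cone, so some $w \in W(Y)$ sends $F'$ into the nef cone. Since $W(Y)$ preserves, up to sign, the set of classes of $(-2)$-curves on $Y$, the vector $w(R)$ is realized (possibly after a sign change) by an effective $(-2)$-curve $R'$ on $Y$. Then $w(F')$ is a nef primitive isotropic class with $w(F') \cdot R' = 1$, giving a special genus one fibration whose special bi-section is $R'$.

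The main obstacle is twofold. First, one must verify that the abstract lattice construction of $F'$ with $F' \cdot R = 1$ can always be made primitive; this is not automatic but is resolved by choosing the $U$-summand so that $F'$ has a coefficient $1$ in the isotropic basis vector $e$. Second, one must rule out the quasi-elliptic case in positive characteristic, since the statement asks specifically for a special \emph{elliptic} fibration: here one argues that the reduced irreducible $(-2)$-bi-section $R'$ meets the generic fiber in two distinct (hence smooth) points, forcing the generic fiber to be a smooth elliptic curve rather than a cuspidal rational curve.
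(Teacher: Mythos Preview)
The paper does not supply its own proof of this proposition; it cites Cossec and, in the proof of Lemma~\ref{specialfibration}, remarks that Cossec's argument rests on ${\rm Num}(Y)\cong E_{10}$ together with the fact that the nef cone is a fundamental domain for $W(Y)$. These are exactly the tools you invoke, so the overall plan is reasonable.

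The execution, however, has a genuine gap. The assertion ``$W(Y)$ preserves, up to sign, the set of classes of $(-2)$-curves on $Y$'' is false. The group $W(Y)$ preserves the set of $(-2)$-\emph{vectors} in ${\rm Num}(Y)$, and it preserves (up to sign) the effective cone, but it does \emph{not} preserve the set of classes of \emph{irreducible} $(-2)$-curves. Concretely, if $C_1,C_2$ are $(-2)$-curves with $C_1\cdot C_2=1$, then $s_{C_1}(C_2)=C_1+C_2$ is an effective $(-2)$-class but cannot be the class of an irreducible curve, since $(C_1+C_2)\cdot C_1=-1<0$. So after moving $F'$ into the nef cone by some $w\in W(Y)$ you only know that $w(R)$ is an effective $(-2)$-class with $w(F')\cdot w(R)=1$. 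Decomposing $w(R)=C+D$ with $C$ irreducible, $w(F')\cdot C=1$, and $D$ effective and supported in fibers, nothing you have written rules out $C^2\geq 0$; for instance $C$ could be a half-fiber of a second genus one pencil with $C^2=0$ and $C\cdot w(F')=1$. Producing an \emph{irreducible} $(-2)$-curve with intersection $1$ against a nef primitive isotropic class is precisely the non-formal content of Cossec's theorem; his argument achieves this by a descent on $F\cdot R$ over nef primitive isotropic classes $F$, not by transporting $R$ through $W(Y)$.

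Your handling of the quasi-elliptic issue is also incorrect: a $(-2)$-curve bi-section of a quasi-elliptic fibration may well meet the generic fiber in two smooth points, since the cusps form a single section and a bi-section generically avoids it. In the paper's setting this is moot, as the Coble analogue (Lemma~\ref{specialfibration}) is stated for genus one fibrations and thus allows the quasi-elliptic case.
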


The following is the analogue of Cossec's theorem for Coble surfaces.

\begin{lemma}\label{specialfibration} Let $S$ be a Coble surface.  
Assume that $S$ has an irreducible effective root.  
Then there exists a special genus one fibration on $S$.  
\end{lemma}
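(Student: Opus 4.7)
The plan is to mirror Cossec's proof of Proposition \ref{Cossec} in the Enriques setting, with a case split coming from Lemma \ref{roots}.

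First I would note that for any genus one fibration $p\colon S\to \mathbf{P}^1$, each boundary component $B_i$ lies in some fiber, since a general fiber $F$ satisfies $F\cdot K_S=0$ and hence $F\cdot B_i=0$. Consequently, whether the irreducible effective root $\alpha$ is a $(-2)$-curve $R$ (case (a)) or a $(-1)$-root $2E+\tfrac{1}{2}B_j+\tfrac{1}{2}B_k$ (case (b)), we have
\[
\alpha\cdot F \;=\; R\cdot F \quad\text{in case (a)},\qquad \alpha\cdot F \;=\; 2\,E\cdot F \quad\text{in case (b)}.
\]
Thus $\alpha$ is a special bi-section of $p$ iff $R\cdot F=2$ in case (a), iff $E\cdot F=1$ (so that $E$ is a $1$-section) in case (b).

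In case (a) I would start from the primitive isotropic class $f_0\in D(S)\cap{\rm CM}(S)$ of an arbitrary genus one fibration on $S$, whose existence is guaranteed by Proposition \ref{Halphen}. Using that $D(S)$ is a fundamental domain of $W(S)$ on $\overline{{\rm CM}(S)}^+$ (Proposition \ref{NefCone}) and invoking Cossec's combinatorial procedure on the hyperbolic lattice ${\rm CM}(S)$, one modifies $f_0$ by successive Picard--Lefschetz reflections in effective irreducible roots orthogonal to $R$ until the intersection $f\cdot R$ attains the value $1$ or $2$ (according to whether the associated Halphen minimal model has index $2$ or $1$); a final reflection returns the resulting vector to $D(S)$, and Lemma \ref{fibration-isotropic} converts it into a special genus one fibration with $R$ as bi-section. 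This is exactly Cossec's argument, applied inside ${\rm CM}(S)$ rather than ${\rm Num}(Y)$.

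In case (b) the same lattice-theoretic reflection procedure applies, since $\alpha$ is a genuine $(-2)$-vector in ${\rm CM}(S)$ even though it is only a $\mathbf{Q}$-divisor on $S$. One produces a primitive isotropic $f\in D(S)\cap{\rm CM}(S)$ with $f\cdot \alpha=2$; by Lemma \ref{fibration-isotropic} it corresponds to a genus one fibration $p$, and the computation $\alpha\cdot F=2E\cdot F$ above forces $E$ to be a $1$-section, so the Halphen minimal model of $p$ must be of index $1$ and $E$ maps to a section of this Jacobian fibration, entirely consistent with $E$ being a $(-1)$-curve meeting the boundary transversally at two points.

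The main obstacle is to transport Cossec's purely combinatorial reflection algorithm from $\mathrm{Num}(Y)\cong E_{10}$ to ${\rm CM}(S)$. In the cases covered by Proposition \ref{CME10} (i.e.\ $n\leq 2$ or $k=\mathbf{C}$), the two lattices are isomorphic and the algorithm transfers verbatim; in general one must verify that the only ingredient Cossec invokes — namely the abundance of primitive isotropic vectors in ${\rm CM}(S)$ meeting a given $(-2)$-vector with prescribed intersection parity — continues to hold, which follows from the signature $(1,9)$ and the evenness of ${\rm CM}(S)$.
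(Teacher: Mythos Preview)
Your approach agrees with the paper for $n\le 2$: both invoke Proposition~\ref{CME10} to identify ${\rm CM}(S)\cong E_{10}$ and then run Cossec's argument verbatim. The case split into $(-2)$-roots versus $(-1)$-roots is unnecessary there, but harmless.

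For $n\ge 3$ there is a genuine gap. First, your description of Cossec's procedure is internally inconsistent: reflecting $f_0$ in roots \emph{orthogonal to $R$} fixes $R$ and hence fixes $f_0\cdot R$, so such reflections cannot lower that intersection number to $1$ or $2$. Second, and more seriously, the assertion that the lattice step ``follows from the signature $(1,9)$ and the evenness of ${\rm CM}(S)$'' is false as stated. Cossec's argument uses the unimodularity of $E_{10}$, not merely its signature and parity. For a general even lattice $L$ of signature $(1,9)$ it can happen that for some $(-2)$-vector $\alpha$ one has $\mathbf{Z}\alpha\oplus\alpha^\perp=L$ (so $f\cdot\alpha$ is always even) while $\alpha^\perp$ fails to represent $2$; then no primitive isotropic $f$ satisfies $f\cdot\alpha\in\{1,2\}$. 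An explicit example is $L=\langle -2\rangle\oplus\langle 4\rangle\oplus\langle -4\rangle^{\oplus 8}$ with $\alpha$ the generator of the first summand. Since for $n\ge 3$ in positive characteristic one does not know ${\rm CM}(S)\cong E_{10}$, you cannot exclude such behaviour.

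The paper bypasses this entirely by a direct geometric argument when $n\ge 3$, which in fact does not use the given root at all. One takes any genus one fibration $p$ on $S$, passes to its relatively minimal model $\bar p\colon H\to\mathbf{P}^1$ (a Halphen surface of index $1$ or $2$), chooses a section or bi-section $E'$ of $\bar p$, and lets $E$ be its proper transform on $S$. If $E^2=-2$ then $E$ was blown up at a fibre singularity and is already a special bi-section of $p$. If $E^2=-1$ then $E$ meets one or two boundary components; in the favourable subcases one reads off a $(-1)$-root bi-section of $p$ directly, and in the remaining subcases one uses the hypothesis $n\ge 3$ to locate further boundary components and $(-1)$-curves in a fibre of $p$, builds from them a \emph{new} nef primitive isotropic class (via Lemma~\ref{fibration-isotropic}), and exhibits a $(-1)$-root bi-section of that new fibration. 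The upshot is that every Coble surface with $n\ge 3$ boundary components admits a special genus one fibration, which is stronger than the stated lemma.
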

\begin{proof}
Let $n$ be the number of boundary components of $S$.
In case of $n=1,2$, ${\rm CM}(S)$ is isomorphic to $E_{10}$ (Proposition \ref{CME10}).
The proof of Proposition \ref{Cossec} depends on this fact and the analogue of Proposition \ref{NefCone}, and hence works well in case of Coble surfaces, too.
Thus the assertion in case of $n=1,2$ follows.

Now assume $n \geq 3$.
The relatively minimal model of a genus one fibration $p:S\to {\bf P}^1$ is a 
Halphen surface $H$ of index 1 or 2,  
and hence it has a section or a bi-section.  Denote it by $E'$ and let $E$ be the proper
transform of $E'$ on $S$.

Assume that $E^2=-2$.  Then $E$ is obtained by blowing up a point $p$ of $E'$.  This means that $p$ is a singular point of a fiber.  Hence the intersection number of $E$ and the obtained fiber is 2, and hence $E$ is a special bi-section. 

Assume $E^2=-1$.  Then $E\cdot B=E\cdot B'=1$ or $E\cdot B=2$ where $B, B'$ are
boundary components. 

First consider the case $E$ meets $B$ and $B'$.  
If $H$ is a Halphen surface of index 1, then $E'=E$ is a section of $p$ and hence 
the effective irreducible root $\alpha = 2E + {1\over 2}B + {1\over 2}B'$ is 
a special bi-section of $p$.
If $H$ is of index 2, a fiber $F$ contains both $B$ and $B'$ (Proposition \ref{Halphen}). There exists a fiber $F$ that contains boundary components $B_1=B, \ldots , B_k=B'$ and 
$(-1)$-curves $E_1, \ldots, E_{k-1}$ such that
$E_1\cdot B_1=E_1\cdot B_2=1$, \ldots, $E_{k-1}\cdot B_{k-1}=
E_{k-1}\cdot B_k=1$ $(k\geq 2)$.  We assume that $k$ is the smallest possible one.
Since $E+E_1+\cdots + E_{k-1}+{1\over 2}(B_1+\cdots +B_k) \in {\rm CM}(S)$ is primitive, 
nef and isotropic, it
defines a genus one fibration $p':S\to {\bf P}^1$ (Lemma \ref{fibration-isotropic}).
Then, by the assumption $n\geq 3$ and the minimality of $k$, there exist a boundary component $B_{k+1}$ and $(-1)$-curve $E_k$
such that $E_k\cdot B_i=E_k\cdot B_{k+1}=1$ for some $i$ $(1\leq i\leq k)$.  Then $2E_k+{1\over 2}B_i +{1\over 2}B_{k+1}$ is
a special bi-section of $p'$.

Next consider the case $E$ meets $B$ with multiplicity 2.  
In this case, $H$ is of index 2 because $E$ gives a bi-section of $p$.  
Let $F$ be the fiber of $p$ containing $B$.   Then all boundary components are contained in $F$.  By the assumption $n\geq 3$, there exist a boundary component $B'$ and a $(-1)$-curve $E_1$
such that $E_1\cdot B=E_1\cdot B'=1$.  
The divisor $2E+B$ is nef, $(2E+B)^2=0$ and hence $E+{1\over 2}B (\in {\rm CM}(S))$ 
defines a genus one fibration $p':S\to {\bf P}^1$ (Lemma \ref{fibration-isotropic}).
Then $2E_1+{1\over 2}B+{1\over 2}B'$ is a special bi-section of $p'$.
\end{proof}

\begin{remark}\label{notationFib}
As mentioned above, a genus one fibration on a Coble surface is not relatively minimal. 
However by using $(-1)$-roots instead of $(-1)$-curves and $(-4)$-curves, we can use
the symbols $\tilde{A}_m, \tilde{D}_n, \tilde{E}_k$ for singular fibers.  The vertices of these
extended Dynkin diagram are represented by $(-1)$- and $(-2)$-roots.
\end{remark}

\medskip
The following gives a relation between a special genus one fibration on a Coble surface and 
its Jacobian fibration which was proved for Enriques surfaces over the complex numbers 
by the author \cite[Lemma 2.6]{Ko} and
was generalized to positive characteristic by Martin \cite[Lemma 2.17]{Martin}.
We omit the proof which is similar to the one for Enriques surfaces.

\begin{proposition}\label{Jacobian2}
Let $S$ be a Coble surface, $X$ the covering $K3$ surface and $\sigma$ 
the covering involution.  Let $p:S\to {\bf P}^1$ be a special genus one fibration 
with a $(-2)$-curve $N$ as a bi-section. 
Let $\tilde{p}:X\to {\bf P}^1$ be the induced genus one fibration and $J(p):J(S)\to {\bf P}^1$ the Jacobian fibration of $p$. 
Let $N^+, N^-$ be the inverse image of $N$ on $X$ which are disjoint $(-2)$-curves.  Consider $N^+$ as the zero section of
$\tilde{p}$ and denote by $t$ the translation by $N^-$.  Then $j(\sigma)= t\circ \sigma$ is an involution and the quotient $X/\langle j(\sigma)\rangle$ is isomorphic to $J(S)$.
\end{proposition}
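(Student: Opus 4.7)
The plan is to verify the two assertions separately: $(1)$ $j(\sigma)=t\circ\sigma$ is an involution of $X$, and $(2)$ the quotient $X/\langle j(\sigma)\rangle$ is isomorphic to $J(S)$. This parallels the proofs in the Enriques case (\cite[Lemma 2.6]{Ko} over ${\bf C}$, \cite[Lemma 2.17]{Martin} in positive characteristic), the only modification being that $\pi$ is here the $K3$ cover of a Coble surface rather than of an Enriques surface.

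For $(1)$, I would work on a general smooth fiber $F$ of $\tilde{p}$. Since $\sigma$ acts non-trivially on the base of $\tilde{p}$, $F$ is sent by $\sigma$ to a distinct fiber $F'=\sigma(F)$, and $\sigma$ interchanges the disjoint $(-2)$-sections $N^+$ and $N^-$. Hence $\sigma|_F : F\to F'$ sends $N^+|_F\mapsto N^-|_{F'}$ and $N^-|_F\mapsto N^+|_{F'}$, and relative to the group structures on $F$ and $F'$ with origins $N^+|_F$ and $N^+|_{F'}$, it has the form $\sigma|_F(x)=\phi(x)+N^-|_{F'}$ for a group isomorphism $\phi : F\to F'$. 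Symmetrically $\sigma|_{F'}(y)=\phi^{-1}(y)+N^-|_F$, and the relation $\sigma^2={\rm id}$ forces $\phi(N^-|_F)=-N^-|_{F'}$. A direct calculation then yields
\[
(t\circ\sigma)|_F(x)=\phi(x)+2\,N^-|_{F'}, \qquad (t\circ\sigma)^2|_F(x)=\phi^{-1}\!\bigl(\phi(x)+2\,N^-|_{F'}\bigr)+2\,N^-|_F=x,
\]
so $j(\sigma)^2$ is the identity on every smooth fiber and therefore on all of $X$.

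For $(2)$, observe that $t$ is fiber-preserving over the base of $\tilde{p}$, so $j(\sigma)$ descends to this base as the involution $\bar{\sigma}$, and the quotient surface inherits a genus one fibration over the quotient base, namely the base ${\bf P}^1$ of $p$. A short computation gives $j(\sigma)(N^-)=t(\sigma(N^-))=t(N^+)=N^-$ as reduced divisors (using that translation by $N^-$ sends the zero section $N^+$ to the section $N^-$), so the image of $N^-$ in the quotient is a section; the quotient fibration is therefore a Jacobian fibration over ${\bf P}^1$. Pulling it back along the degree $2$ cover of bases recovers $\tilde{p}$, so by uniqueness of the Jacobian fibration attached to $p$ the quotient (after minimal resolution if necessary) is identified with $J(p) : J(S)\to{\bf P}^1$.

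The main obstacle is part $(1)$: one must carefully manage two distinct elliptic-curve group structures on the swapped fibers and verify that the translation contributions in the two applications of $t$ cancel exactly. Part $(2)$ is then essentially formal, once one knows the quotient admits a section and recovers $\tilde{p}$ after base change.
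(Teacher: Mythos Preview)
The paper omits this proof entirely, simply referring to the Enriques case (\cite[Lemma 2.6]{Ko}, \cite[Lemma 2.17]{Martin}); your argument is precisely the one carried out there, transported to the Coble setting, and it is correct. One cosmetic remark on part~(2): since $j(\sigma)$ is non-symplectic its fixed locus on the $K3$ surface $X$ is a disjoint union of smooth curves, so the quotient is already smooth and no resolution is needed---what may be required instead is passing to the relatively minimal model of the resulting genus one fibration.
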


\noindent
Note that if a bi-section $N$ is a $(-1)$-root, then $p$ is obtained from
a Jacobian fibration by blowing up.

In case of positive characteristics, the covering $K3$ might be supersingular, that is,
its Picard number is 22.  The following is well-known.

\begin{proposition}\label{SSK3Pic}{\rm (Artin \cite{Artin})} Let $X$ be a supersingular $K3$ surface in characteristic $p>0$.  Then ${\rm Pic}(X)^*/{\rm Pic}(X)\cong ({\bf Z}/p{\bf Z})^{2\sigma}$ 
$(1\leq \sigma\leq 10)$.  The number $\sigma$ is called Artin invariant.
\end{proposition}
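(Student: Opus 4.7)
The plan is to follow Artin's crystalline cohomology argument \cite{Artin}. First, I would invoke the Tate conjecture for supersingular $K3$ surfaces (proved by Artin in this case): the Picard rank is exactly $22$, and the crystalline cycle class map ${\rm Pic}(X) \otimes_{\bf Z} {\bf Z}_p \hookrightarrow H^2_{\rm cris}(X/W)$ has image of full rank inside the free $W$-module of rank $22$, where $W = W(k)$ is the ring of Witt vectors. The cup product makes $H^2_{\rm cris}(X/W)$ unimodular, and its restriction to the Picard image agrees with the intersection pairing.

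Next, I would show that ${\rm Pic}(X)^*/{\rm Pic}(X)$ is $p$-elementary. For each prime $\ell \neq p$, the $\ell$-adic cycle class map ${\rm Pic}(X) \otimes {\bf Z}_\ell \hookrightarrow H^2_{\rm et}(X, {\bf Z}_\ell(1))$ embeds into an $\ell$-adically unimodular target, killing the $\ell$-part of the discriminant group. For the $p$-part, the supersingularity hypothesis asserts that every Frobenius slope on $H^2_{\rm cris}(X/W)$ equals $1$, so $F := \phi/p$ is a bijective semilinear endomorphism of $H^2_{\rm cris}(X/W)$. One then identifies the Picard image with the $W$-span of $H^2_{\rm cris}(X/W)^{F=1}$, and a direct computation shows the cokernel is annihilated by $p$. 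Hence ${\rm Pic}(X)^*/{\rm Pic}(X)$ is a finite-dimensional ${\bf F}_p$-vector space; the induced non-degenerate discriminant pairing, together with Frobenius compatibility, forces its dimension to be even, say $2\sigma$, giving ${\rm Pic}(X)^*/{\rm Pic}(X) \cong ({\bf Z}/p{\bf Z})^{2\sigma}$.

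Finally, for the bounds: the lower inequality $\sigma \geq 1$ holds because no even unimodular lattice of signature $(1,21)$ exists (the signature $(p,q)$ of any even unimodular lattice must satisfy $p-q \equiv 0 \pmod 8$, whereas $1-21 = -20$ is not). The upper bound $\sigma \leq 10$ follows from the $p$-elementary lattice theory of Rudakov--Shafarevich (or Nikulin): an even lattice of signature $(1,21)$ whose discriminant group is $p$-elementary of rank $r$ can exist only for $r \leq 20$. The main obstacle is the even-parity argument in the second step, which requires the Frobenius-compatibility of the Picard embedding inside crystalline cohomology; the remainder reduces to formal crystalline and lattice-theoretic bookkeeping.
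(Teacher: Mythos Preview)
The paper does not prove this proposition; it is quoted as Artin's theorem \cite{Artin} and used as a black box. So there is no ``paper's own proof'' to compare against, and your sketch is essentially a summary of the Artin--Ogus crystalline argument that underlies the cited result.

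Two remarks on the sketch itself. First, in this paper \emph{supersingular} is taken to mean Picard number $22$ (see the sentence immediately preceding the proposition), so invoking the Tate conjecture to obtain rank $22$ is unnecessary; conversely, the implication ``rank $22$ $\Rightarrow$ all crystalline slopes equal $1$'' is the easy direction and follows because algebraic cycle classes lie in the slope-$1$ part. Second, the phrase ``a direct computation shows the cokernel is annihilated by $p$'' hides the genuine content of Artin's theorem: one must use the $K3$-crystal structure (the Hodge filtration and the relation $\phi(F^i)\subset p^iH$) to see that the $W$-span of the Tate module satisfies $pH^2_{\rm cris}\subset {\rm Pic}(X)\cdot W$. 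The evenness of the exponent then follows cleanly from the index-squared formula $\text{disc}({\rm Pic}(X)\otimes W)=[H^2_{\rm cris}:{\rm Pic}(X)\cdot W]^2$ inside the unimodular crystalline lattice, and your bounds $1\le\sigma\le 10$ via the non-existence of an even unimodular lattice of signature $(1,21)$ are correct.
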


Finally we define $R$-invariants for Coble surfaces.  
Let $S$ be a Coble surface with $n$ boundary components $B_1,\ldots, B_n$. 
Let $\pi:X\to S$ be the double covering branched along $B_1, \ldots, B_n$ and $\sigma$ the covering involution.
For any curve $C$ on $S$ we denote by $\tilde{C}$ the inverse image of $C$.
Define $L^+ = \pi^*({\rm CM}(S))$.
Denote by $L^-$ the orthogonal complement of $L^+$ in ${\rm Pic}(X)$.
Note that $(-2)$-curves $\tilde{B}_1,\ldots , \tilde{B}_n$ are contained in $L^-$.
Define
$$h^{\pm} = \left\{ \delta^{\pm}\in L^{\pm} \ : \ \exists\ \delta^{\mp}\in L^{\mp}, \ (\delta^{\pm})^2 = -4, 
\ {\delta^++\delta^-\over 2} \in {\rm Pic}(X) \right\}.$$
Let $\la h^-\ra$ be the sublattice of $L^-$ generated by $h^-$.  
Then $\la h^-\ra=K(2)$ where $K$ is a root lattice.  

For an even lattice $L$, we define a quadratic form $q_{L/2L}$ and a symmetric bilinear form $f_{L/2L}$ on $L/2L$ by
$$q_{L/2L}(x + 2L)= {1\over 2} \langle x, x\rangle \ {\rm mod}\ 2{\bf Z},\quad f_{L/2L}(x + 2L, y+2L)= \langle x, y\rangle \ {\rm mod}\ 2{\bf Z}.$$
The kernel of $f_{L/2L}$ is the subspace of all $x$ such that $f_{L/2L}(x,y)=0$ for all $y\in L/2L$ and the
kernel of $q_{L/2L}$ is the subspace of all $x$ in the kernel of $f_{L/2L}$ for which also $q_{L/2L}(x)=0$.
We define the nullity and the rank of $q_{L/2L}$ to be the dimension and the codimension of 
its kernel, and call $q_{L/2L}$ non-singular when its nullity is zero.  
A subspace is called isotropic if $q_{L/2L}(x)$ vanishes for all $x$ in the subspace (see \cite[page xi]{ATLAS}).
Then we have a homomorphism
$$\gamma : K/2K\cong K(2)/2K(2) \to L^+/2L^+ \cong {\rm CM}(S)/2{\rm CM}(S)$$
defined by
$$\gamma (\delta^-\ {\rm mod}\ 2) = \delta^+\ {\rm mod}\ 2.$$
We define a subgroup $H$ of $K/2K$ by the kernel of $\gamma$ which is isotropic with respect to $q_{K/2K}$.
In this case, we allow that the overlattice $K_H=\{ x\in K\otimes {\bf Q}\ : \ 2x \in H\}$
contains $(-1)$-vectors contrary to the case of Enriques surfaces.
We call the pair $(K, H)$ the $R$-invariant of $S$.
  
Recall that there are two types of effective irreducible roots.  
One is a $(-2)$-root (= a $(-2)$-curve) and the other is a $(-1)$-root $2E + {1\over 2}B_i + {1\over 2}B_j$, where $E$ is a $(-1)$-curve that intersects $B_i$ and $B_j$.  If $C$ is a $(-2)$-curve, then $\pi^*(C)=\tilde{C}^++\tilde{C}^-$ 
where $\tilde{C}^+, \tilde{C}^-$ are disjoint $(-2)$-curves on $X$ with 
$\sigma(\tilde{C}^+)=\tilde{C}^-$. If $E$ is a $(-1)$-curve meeting two boundary components, then $\pi^*(E)=\tilde{E}$ is a $(-2)$-curve.  
If $C=B_i$, then $\pi^*(B_i) =2\tilde{B}_i$.
Therefore $\pi^*(2E + {1\over 2}B_i + {1\over 2}B_j)=2\tilde{E}+\tilde{B}_i+\tilde{B}_j$.
 We associate $\delta^+ = \tilde{C}^++\tilde{C}^-$ and $\delta^-=\tilde{C}^+-\tilde{C}^-$ to the 
 $(-2)$-root $C$, and associate $\delta^+= 2E + \tilde{B}_i + \tilde{B}_j$ and 
 $\delta^-=\tilde{B}_i \pm \tilde{B}_j$
 to the $(-1)$-root $2E + {1\over 2}B_i + {1\over 2}B_j$.  
Note that $K_H$ contains
$(-1)$-vectors corresponding to $\tilde{B}_1,\ldots , \tilde{B}_n$ when $n\geq 2$.
For more details and examples, we refer the reader to Dolgachev and Kond\=o \cite[Chapter 9]{DK}.

\section{Examples}\label{sec4}

In this section we present examples of Coble surfaces with finite automorphism group.
All of them are given in Dolgachev and Kond\=o \cite[\S 9.8]{DK}.  
Here we also give another description
of Coble surfaces of type I, II, V and of VI with $n=5$.  

Let $(u_0:u_1, v_0:v_1)$ be bi-homogeneous coordinates of ${\bf P}^1\times {\bf P}^1$ and let
$\phi$ be the involution of ${\bf P}^1\times {\bf P}^1$ defined by 
$\phi (u_0:u_1, v_0:v_1)= (u_0:-u_1, v_0:-v_1)$.  

\begin{example}\label{TypeI2} \ {\bf A Coble surface of type I with 2 boundary components.}

In Kond\=o \cite[Example I]{Ko}, the author gave a complex Enriques surface $Y$ of type I whose covering $K3$ surface $X$ is the minimal resolution of the double covering of ${\bf P}^1\times {\bf P}^1$ branched along the $\phi$-invariant divisor $C+ \sum_{i=1}^4L_i$ of bi-degree $(4,4)$, where $C$ and $L_i$ are defined by
$$C:(2u_0^2 - u_1^2)(v_0^2 - v_1^2)=(2\alpha v_0^2+(1-2\alpha)v_1^2)(u_0^2-u_1^2), \ \alpha \in k, \alpha \ne 1, 1/2, 3/2 ;$$
$$L_1: u_0/u_1=1; \ L_2: u_0/u_1=-1; \ L_3: v_0/v_1=1;\ L_4: v_0/v_1=-1.$$
Let $\sigma$ be the lift of $\phi$ to $X$ which is a fixed point free involution of $X$ because the fixed points of $\phi$ are not contained in the branch divisor.  Then
$Y\cong X/\la \sigma \ra$.

This example exists in positive characteristic $p\ne 2$.
In case of $\alpha = 1$, the curve $C$ splits into the two quadrics 
$$Q_1: u_1v_0-u_0v_1=0, \quad Q_2: u_1v_0+u_0v_1=0$$
and the involution $\phi$ fixes two intersection points $q_1=(1:0, 1:0), q_2=(0:1,0:1)$ of $Q_1$ and $Q_2$.  
The involution $\sigma$ of $X$ has two fixed curves which are exceptional curves over 
$q_1, q_2$.  Thus we obtain the Coble surface $S= X/\la \sigma\ra$ with two boundary
components $B_1, B_2$.  In case of Enriques surfaces of type I, 
the images of $Q_1$, $Q_2$ are $(-2)$-curves meeting
together at two points transversely.  On the other hand, on the Coble surface $S$, 
the images of two curves $Q_1$, $Q_2$ are $(-1)$-curves $E_1, E_2$, and hence 
two $(-1)$-roots $2E_i+{1\over 2}B_1 + {1\over 2}B_2$ \ $(i=1,2)$ appear. 
The surface $S$ contains twelve effective irreducible roots forming the same dual graph 
$\Gamma_{\rm I}$ of an Enriques surface of type I as in the following Figure \ref{typeI}.  
In Figure \ref{typeI} a circle denotes a $(-1)$-root
and the remaining ten vertices are $(-2)$-roots.

\begin{figure}[htbp]
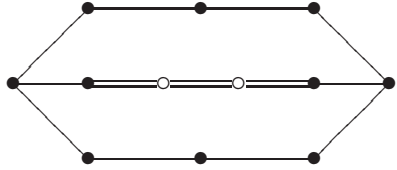

\begin{center}
 \xy
(-30,0)*{};
(20,0)*{\circ},(30,0)*{\circ}
@={(00,0),(10,0),(40,0),(50,0),(25,10),(10,10),(40,10),(10,-10),(25,-10),(40,-10)}@@{*{\bullet}};
(00,0)*{};(10,0)*{}**\dir{-};
(10,0)*{};(19,0)*{}**\dir{=};
(21,0)*{};(29,0)*{}**\dir{=};
(31,0)*{};(40,0)*{}**\dir{=};
(40,0)*{};(50,0)*{}**\dir{-};
(00,0)*{};(10,10)*{}**\dir{-};
(10,10)*{};(25,10)*{}**\dir{-};
(25,10)*{};(40,10)*{}**\dir{-};
(40,10)*{};(50,0)*{}**\dir{-};
(00,0)*{};(10,-10)*{}**\dir{-};
(10,-10)*{};(25,-10)*{}**\dir{-};
(25,-10)*{};(40,-10)*{}**\dir{-};
(40,-10)*{};(50,0)*{}**\dir{-};
\endxy 
 \caption{The dual graph $\Gamma_{\rm I}$ of Type I}
 \label{typeI}
\end{center}
\end{figure}

\noindent
The dual graph satisfies the condition in Theorem \ref{Vinberg} and hence
${\rm Aut}(S)$ is finite (Proposition \ref{FiniteIndex}).  Moreover it follows from Proposition \ref{Vinberg2} that
$S$ has exactly twelve effective irreducible roots.
By considering Jacobian fibrations (see \cite[Theorem 3.1.1]{Ko}), we can prove
${\rm Aut}(S)$ is isomorphic to the dihedral group ${\rm D}_8$ of order 8.

\begin{proposition}\label{typeI2prop}{\rm (Dolgachev and Kond\=o \cite[Example 9.8.4]{DK})}
Let $S$ be the Coble surface of type ${\rm I}$ with two boundary components.
Then ${\rm Aut}(S)\cong {\rm D}_8$ and 
the $R$-invariant of $S$ is $(E_8\oplus A_1^{\oplus 2}, {\bf Z}/2{\bf Z})$.
\end{proposition}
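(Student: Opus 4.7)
The proof proceeds in two stages: first establishing $\Aut(S) \cong {\rm D}_8$, then identifying the $R$-invariant.

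\emph{Stage 1 (automorphism group).} First I would verify that $\Gamma_{\rm I}$ satisfies the hypothesis of Theorem \ref{Vinberg}: every connected parabolic subdiagram of $\Gamma_{\rm I}$ extends to a parabolic subdiagram of the maximal rank $9$. The relevant maximal parabolic subdiagrams arise from singular fibers of extremal Jacobian fibrations encoded in the graph (for example a $\tilde E_8$ together with an $\tilde A_1$, coming from a special elliptic fibration of type $({\rm II}^*, {\rm I}_1, {\rm I}_1)$). Combined with Remark \ref{Vinbergremark}, this gives that $W(\Gamma_{\rm I})$ has finite index in $\O({\rm CM}(S))$, so $\Aut(S)$ is finite by Proposition \ref{FiniteIndex}; Proposition \ref{Vinberg2} then ensures that the twelve vertices are all of the effective irreducible roots on $S$. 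Following the strategy of \cite[Theorem 3.1.1]{Ko} indicated in the paragraph preceding the proposition, I would use the two special elliptic fibrations on $S$ of type $({\rm II}^*, {\rm I}_1, {\rm I}_1)$, coming from the two $\tilde E_8$-subdiagrams of $\Gamma_{\rm I}$, together with Proposition \ref{Jacobian2} and the theory of Mordell--Weil lattices, to exhibit explicit involutions generating a ${\rm D}_8$-action on $S$. The matching upper bound comes from analyzing the natural homomorphism $\Aut(S)\to \Aut(\Gamma_{\rm I})$ and the kernel of $\rho:\Aut(S)\to \O({\rm CM}(S))$ using the explicit double-cover description in Example \ref{TypeI2}.

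\emph{Stage 2 ($R$-invariant).} Following the recipe of Section \ref{sec3}, each $(-2)$-curve $C_i$ $(i=1,\ldots,10)$ of $S$ lifts to a pair of disjoint $(-2)$-curves $\tilde C_i^\pm$ on $X$, giving $\delta_i^- = \tilde C_i^+ - \tilde C_i^- \in h^-$ with $(\delta_i^-)^2 = -4$; each $(-1)$-root $\alpha_j = 2E_j + \tfrac12 B_1 + \tfrac12 B_2$ $(j=1,2)$ contributes $\delta_{10+j}^- = \tilde B_1 + (-1)^j \tilde B_2$, again of square $-4$. I would compute the Gram matrix of these twelve vectors in $L^-$ using $(\tilde C_i^+ - \tilde C_i^-)\cdot(\tilde C_j^+ - \tilde C_j^-) = 2\,\tilde C_i^+\cdot \tilde C_j^+ - 2\,\tilde C_i^+\cdot \tilde C_j^-$ together with $\sigma^*(\tilde C_i^+) = \tilde C_i^-$, and then identify the sublattice $\langle h^-\rangle$ with $(E_8 \oplus A_1^{\oplus 2})(2)$: the $E_8$-summand comes from the $\tilde E_8$-shaped subgraph formed by the $(-2)$-curves (after removing the two circles from $\Gamma_{\rm I}$), and the $A_1^{\oplus 2}$-summand is spanned by $\tilde B_1 + \tilde B_2$ and $\tilde B_1 - \tilde B_2$. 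The subgroup $H = \ker\gamma$ is then determined by finding which $\bbZ/2$-linear combinations of the $\delta_i^-$ send to zero in $L^+/2L^+$; the nontrivial element of $H \cong \bbZ/2\bbZ$ corresponds to the relation reflecting that $\alpha_1 + \alpha_2 - (B_1+B_2)/2 \cdot (\textup{suitable class})$ lifts to twice an integral class on $X$.

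\emph{Main obstacle.} The principal difficulty is in the Gram matrix computation: the intersection numbers $\tilde C_i^\pm \cdot \tilde C_j^\pm$ on $X$ are not determined by $C_i\cdot C_j$ on $S$ alone, since one must separate those intersection points of $C_i$ and $C_j$ that lie on the branch locus $B_1\cup B_2$ from those that do not. Resolving this requires tracking each $(-2)$-curve's incidence with the boundary in the explicit model of Example \ref{TypeI2}; once these splittings are pinned down, both the identification of the root sublattice of $L^-$ and the computation of $H$ reduce to routine lattice-theoretic bookkeeping.
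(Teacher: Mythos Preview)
Your overall strategy matches the paper's: the text preceding the proposition already sketches Stage~1 exactly as you describe (Vinberg's criterion for finiteness, then the Jacobian-fibration argument of \cite[Theorem 3.1.1]{Ko}), and the proposition itself is stated with a citation to \cite[Example 9.8.4]{DK} rather than a self-contained proof of the $R$-invariant. So your plan is the right one.

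There is, however, a factual slip in your ``main obstacle''. The ten $(-2)$-curves $C_i$ on $S$ are automatically \emph{disjoint} from the branch locus: since $-2K_S = B_1+B_2$ and $C_i\cdot K_S=0$, we get $C_i\cdot B_1 = C_i\cdot B_2 = 0$. Hence no intersection point of $C_i$ and $C_j$ ever lies on $B_1\cup B_2$, and the issue you raise does not arise. The genuine ambiguity is the \emph{labeling} of the two sheets: for each $i$ one must choose which component of $\pi^{-1}(C_i)$ to call $\tilde C_i^+$, and when $C_i\cdot C_j=1$ the value of $\tilde C_i^+\cdot\tilde C_j^+$ (equal to $0$ or $1$) depends on whether the labels are chosen compatibly along the intersection. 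This is resolved not by tracking the branch locus but by fixing a coherent system of labels using the explicit double-cover model of Example~\ref{TypeI2}; once done, the sublattice generated by the $\delta_i^-$ is independent of those choices and one reads off the $E_8$ summand. The $A_1^{\oplus 2}$ summand is exactly as you say, generated by $\tilde B_1\pm\tilde B_2$, and the nontrivial element of $H$ comes from the fact that both $(-1)$-roots share the \emph{same} pair of boundary components, so their $\delta^-$-classes coincide while their $\delta^+$-classes differ by an element of $2L^+$. Your final displayed expression for this element is garbled and should be rewritten once you have the explicit classes in hand.
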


\end{example}

\begin{example}\label{TypeI1} {\bf A Coble surface of type I with one boundary component.}

We continue the same notation as in Example \ref{TypeI2}.
In case of $\alpha = 1/2$ or $3/2$, the curve $C$ has a node at $q=(0:1, 1:0)$ which is a fixed point of $\phi$.  The involution $\sigma$ has a fixed curve which is the exceptional curve over $q$.
Thus we obtain the Coble surface $S= X/\la \sigma\ra$ with a boundary component.  
In this case, the surface $S$ has twelve $(-2)$-curves forming the dual graph $\Gamma_{\rm I}$ given in Figure \ref{typeI} as in the case of Enriques surfaces of type I. 

\begin{proposition}\label{typeI1prop}{\rm (Dolgachev and Kond\=o \cite[Example 9.8.5]{DK})}
Let $S$ be the Coble surface of type ${\rm I}$ with one boundary component.
The surface $S$ has the automorphism group ${\rm Aut}(S)$ isomorphic to the dihedral group ${\rm D}_8$ of order $8$.  The $R$-invariant of $S$ is $(E_8\oplus A_1, \{ 0 \})$.
\end{proposition}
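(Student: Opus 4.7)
The plan is to mirror the proof of the preceding Proposition \ref{typeI2prop}, adapted to the single-boundary-component situation. First, I would confirm that the twelve $(-2)$-curves displayed in Figure \ref{typeI} satisfy the hypothesis of Theorem \ref{Vinberg}: the graph $\Gamma_{\rm I}$ has no triple lines, and every connected parabolic subdiagram (of type $\tilde{E}_8$, $\tilde{E}_7 \oplus \tilde{A}_1$, $\tilde{D}_8 \oplus \tilde{A}_1$, etc.) extends to a parabolic subdiagram of maximal rank $8 = 9 - 1$. Combined with Remark \ref{Vinbergremark}, this shows $W(\Gamma_{\rm I})$ is of finite index in $\text{O}(\text{CM}(S))$, hence $\text{Aut}(S)$ is finite by Proposition \ref{FiniteIndex}, and Proposition \ref{Vinberg2} then ensures that these twelve $(-2)$-curves exhaust the effective irreducible roots on $S$.

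Next, for the determination $\text{Aut}(S) \cong \text{D}_8$, I would study the natural homomorphism $\rho: \text{Aut}(S) \to \text{O}(\text{CM}(S))$. Any automorphism permutes the twelve effective irreducible roots, giving a homomorphism $\text{Aut}(S) \to \text{Aut}(\Gamma_{\rm I})$, and a direct inspection of Figure \ref{typeI} yields $\text{Aut}(\Gamma_{\rm I}) \cong \text{D}_8$ (generated by the evident horizontal reflection exchanging the two $(-4)$-cycles with the outer path, and a flip reversing the chain). The kernel of $\rho$ is trivial because the twelve roots span $\text{CM}(S) \otimes \mathbb{Q}$ and $\rho$ itself has finite kernel by Proposition \ref{FiniteIndex}; applying Proposition \ref{Jacobian2} to a special extremal fibration on $S$ shows an element in the kernel lifts to an automorphism of the Jacobian fibration preserving each section, hence is trivial. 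Surjectivity of the composition $\text{Aut}(S) \to \text{D}_8$ is obtained from the explicit $\mathbb{P}^1 \times \mathbb{P}^1$-model of Example \ref{TypeI2}: the involutions $u_0/u_1 \mapsto -u_0/u_1$, $v_0/v_1 \mapsto -v_0/v_1$, and, at the distinguished value $\alpha = 1/2$ or $3/2$, the coordinate swap that preserves the branch divisor, all descend to automorphisms of $S$ and generate $\text{D}_8$.

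Finally, for the $R$-invariant $(E_8 \oplus A_1, \{0\})$, I would unpack the construction in Section \ref{sec3}. Since $n = 1$, there are no $(-1)$-roots on $S$, so the contributions to $h^-$ come only from the twelve $(-2)$-curves: each $(-2)$-curve $C$ pulls back as $\tilde{C}^+ + \tilde{C}^-$ on $X$, and we read off $\delta^- = \tilde{C}^+ - \tilde{C}^-$. A direct intersection computation from the dual graph $\Gamma_{\rm I}$ shows $\langle h^- \rangle \cong (E_8 \oplus A_1)(2)$, so $K \cong E_8 \oplus A_1$. For the kernel $H$ of $\gamma: K/2K \to L^+/2L^+$, I would argue that any class $\delta^- \in K$ with $\delta^+ \in 2L^+$ forces $\tilde{C}^+ + \tilde{C}^-$ to be $\sigma$-invariant and divisible by $2$ on $X$, which (since the unique fixed curve $\tilde{B}_1$ is orthogonal to $L^+$) leaves no nontrivial possibilities; hence $H = \{0\}$.

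The main obstacle will be verifying $H = \{0\}$ cleanly, since this requires tracking the precise image of each $\delta^+$ in $L^+/2L^+$ and excluding unexpected coincidences; the computation is local around each reducible fiber of an extremal special fibration on $S$, but the bookkeeping for all twelve roots simultaneously is the delicate point. Everything else follows by close analogy with the known Enriques case of type I and the argument of Proposition \ref{typeI2prop}.
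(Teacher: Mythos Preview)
Your argument for the finiteness of ${\rm Aut}(S)$ and the exhaustiveness of the twelve roots is fine and matches the paper. The computation of the $R$-invariant is also essentially in the right spirit.

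However, your determination of ${\rm Aut}(S)\cong {\rm D}_8$ contains two genuine errors. First, ${\rm Aut}(\Gamma_{\rm I})$ is not ${\rm D}_8$: a direct check of Figure~\ref{typeI} shows that the only symmetries are the left--right reflection (swapping the two endpoints of the central chain) and the top--bottom reflection (swapping the two outer $4$-paths), and these commute, so ${\rm Aut}(\Gamma_{\rm I})\cong ({\bf Z}/2{\bf Z})^2$. Second, the kernel of $\rho:{\rm Aut}(S)\to {\rm O}({\rm CM}(S))$ is \emph{not} trivial in type~I: there is a numerically trivial involution (this is already the case for the Enriques surface of type~I, as is well known from \cite{Ko}). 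So the correct picture is an extension of $({\bf Z}/2{\bf Z})^2$ by ${\bf Z}/2{\bf Z}$, and one must identify this extension as ${\rm D}_8$ rather than $({\bf Z}/2{\bf Z})^3$. Your proposed explicit generators from the ${\bf P}^1\times{\bf P}^1$ model do not work as stated either: the two sign involutions $u_1\mapsto -u_1$ and $v_1\mapsto -v_1$ compose to $\phi$, whose lift $\sigma$ acts trivially on $S$, so they give at most one nontrivial class on $S$; and the coordinate swap $(u,v)\leftrightarrow (v,u)$ does not preserve the branch curve $C$ at $\alpha=1/2$ or $3/2$.

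The paper (and \cite[Theorem 3.1.1]{Ko}) instead constructs the needed automorphisms by considering the Jacobian of a special extremal genus one fibration on $S$: the torsion sections of the Mordell--Weil group act by translation and descend to automorphisms of $S$, and the resulting group together with the covering-theoretic involutions is identified as ${\rm D}_8$. You should replace your explicit-generator approach with this Jacobian-fibration argument.
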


\end{example}

\begin{example}\label{TypeII} {\bf A Coble surface of type II with one boundary component.}

We use the same notation as in Example \ref{TypeI2}.
In Kond\=o \cite[Example II]{Ko}, the author gave a complex Enriques surface $Y$ of type II whose covering $K3$ surface $X$ is the minimal resolution of the double covering of ${\bf P}^1\times {\bf P}^1$ branched along the divisor $C + \sum_{i=1}^2L_i^{\pm}$ of bi-degree $(4,4)$, where $C$ and $L_i^{\pm}$ are defined by
$$C: (v_0^2 - v_1^2)u_0^2 = (v_0^2 + \alpha v_1^2)u_1^2, \ \alpha \in k, \alpha \ne 0, -1;$$
$$L_1^{\pm}: u_0/u_1=\pm 1; \ L_2^{\pm}: v_0/v_1=\pm 1.$$

This example also exists in positive characteristic $p\ne 2$.
In case of $\alpha = 0$, the curve $C$ has a node at $q=(0:1, 0:1)$ which is a fixed point of $\phi$.  The involution $\sigma$ has a fixed curve which is the exceptional curve over $q$.
Thus we obtain a Coble surface $S= X/\la \sigma\ra$ with a boundary component.  
The surface $S$ has twelve $(-2)$-curves forming the dual graph $\Gamma_{\rm II}$ given in the following Figure \ref{typeII}:

\begin{figure}[htbp]
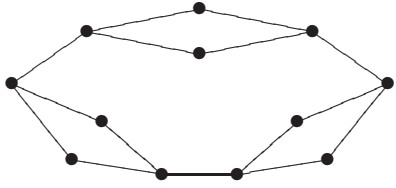

\begin{center}
 \xy
(-30,0)*{};
@={(00,0),(50,0),(20,-12),(30,-12),(8,-10),(42,-10),(12,-5),(38,-5),(10,7),(25,10),(40,7),(25,4)}@@{*{\bullet}};
(42,-10)*{};(50,0)*{}**\dir{-};
(38,-5)*{};(50,0)*{}**\dir{-};
(10,7)*{};(25,10)*{}**\dir{-};
(10,7)*{};(25,4)*{}**\dir{-};
(25,10)*{};(40,7)*{}**\dir{-};
(25,4)*{};(40,7)*{}**\dir{-};
(00,0)*{};(8,-10)*{}**\dir{-};
(00,0)*{};(12,-5)*{}**\dir{-};
(8,-10)*{};(20,-12)*{}**\dir{-};
(12,-5)*{};(20,-12)*{}**\dir{-};
(00,0)*{};(10,7)*{}**\dir{-};
(50,0)*{};(40,7)*{}**\dir{-};
(30,-12)*{};(38,-5)*{}**\dir{-};
(20,-12)*{};(30,-12)*{}**\dir{-};
(30,-12)*{};(42,-10)*{}**\dir{-};
\endxy 
 \caption{The dual graph $\Gamma_{\rm II}$ of Type II}
 \label{typeII}
\end{center}
\end{figure}

\noindent
The dual graph satisfies the condition in Theorem \ref{Vinberg} and hence
${\rm Aut}(S)$ is finite (Proposition \ref{FiniteIndex}).  Moreover it follows from Proposition \ref{Vinberg2} that
$S$ has exactly twelve effective irreducible roots.
By considering Jacobian fibrations (see \cite[Theorem 3.2.1]{Ko}), we can prove
${\rm Aut}(S)$ is isomorphic to the symmetric group $S_4$ of degree 4.

In case of $\alpha = -1$, the divisor $C + \sum_{i=1}^2L_i^{\pm}$ is not reduced.

\begin{proposition}\label{typeIIprop}{\rm (Dolgachev and Kond\=o \cite[Example 9.8.6]{DK})}
Let $S$ be the Coble surface of type ${\rm II}$ with one boundary component.
Then ${\rm Aut}(S)\cong S_4$ and the $R$-invariant of $S$ is $(D_9, \{ 0 \})$.
\end{proposition}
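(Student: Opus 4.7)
The plan is to parallel the proofs of Propositions \ref{typeI2prop} and \ref{typeI1prop}, so I would first tighten the enumeration of effective irreducible roots already begun in Example \ref{TypeII}. The twelve vertices of $\Gamma_{\rm II}$ are all $(-2)$-curves because $S$ has only one boundary component, so the $(-1)$-root construction $2E+\tfrac12 B_i+\tfrac12 B_j$ cannot occur. I would check that every connected parabolic subdiagram of $\Gamma_{\rm II}$ is a component of a parabolic subdiagram of rank $8$ (there are visible $\tilde{E}_7\oplus\tilde{A}_1$, $\tilde{D}_8$, and $\tilde{A}_7\oplus\tilde{A}_1$ configurations, corresponding to the reducible fibers of the three classes of special elliptic fibrations read off from the graph), so Theorem \ref{Vinberg} gives $[{\rm O}({\rm CM}(S)):W(\Gamma_{\rm II})]<\infty$. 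Then Proposition \ref{FiniteIndex} yields $|{\rm Aut}(S)|<\infty$ and Proposition \ref{Vinberg2} guarantees that the twelve vertices are \emph{all} effective irreducible roots on $S$.

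To identify ${\rm Aut}(S)\cong S_4$, I would first show that the natural map $\rho:{\rm Aut}(S)\to{\rm O}({\rm CM}(S))$ is injective: any element in $\ker\rho$ lifts to an automorphism of the covering $K3$ surface $X$ acting trivially on $\pi^*({\rm CM}(S))$ and fixing the branch curve $\tilde B$ pointwise, and a standard argument (e.g.\ the lift acts trivially on a transcendental or characteristic-$p$ analogue of $H^0(X,\Omega^2_X)$) forces it to be trivial. Consequently ${\rm Aut}(S)$ embeds into ${\rm Aut}(\Gamma_{\rm II})$, which one computes directly to be $S_4$ of order $24$. Surjectivity onto the graph automorphism group then follows by constructing enough automorphisms via Jacobian fibrations as in \cite[Theorem 3.2.1]{Ko}: for each of the special elliptic fibrations $p:S\to{\bf P}^1$ with a $(-2)$-curve as bi-section, Proposition \ref{Jacobian2} realizes the Jacobian fibration $J(p)$ as a known extremal rational elliptic surface (from Table \ref{extremal}), and the Mordell--Weil group of $J(p)$ produces automorphisms of $S$ that act on $\Gamma_{\rm II}$ by prescribed graph symmetries. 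Combining generators from two or three different special fibrations generates all of $S_4$.

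For the $R$-invariant, I would compute directly from the double cover $\pi:X\to S$ branched along $B$. Each $(-2)$-curve $C_i$ on $S$ splits as $\pi^*(C_i)=\tilde C_i^{+}+\tilde C_i^{-}$, giving vectors $\delta_i^{-}=\tilde C_i^{+}-\tilde C_i^{-}\in L^{-}$ of square $-4$, while the single boundary contributes $\tilde B\in L^{-}$ of square $-2$ (coming from $\pi^*(B)=2\tilde B$) which is \emph{not} in $h^{-}$ since it has the wrong square. Reading the intersection pattern $\delta_i^{-}\cdot\delta_j^{-}=2\,(\tilde C_i^{+}\cdot \tilde C_j^{+}-\tilde C_i^{+}\cdot\tilde C_j^{-})$ off of $\Gamma_{\rm II}$ gives a root sublattice of $L^{-}(1/2)$ of type $D_9$, exactly as in Nikulin's Enriques type II; thus $K\cong D_9$. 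The subgroup $H=\ker\bigl(\gamma:K/2K\to{\rm CM}(S)/2{\rm CM}(S)\bigr)$ is then shown to be trivial by noting that a non-zero element would produce a non-trivial relation among the $\delta_i^{+}$ modulo $2L^{+}$, and inspection of $\Gamma_{\rm II}$ rules this out since $n=1$ leaves no boundary relations to exploit.

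The main obstacle is step two, the surjectivity onto $S_4$: one must verify that the Jacobian fibrations of the relevant special genus one fibrations on $S$ agree (as specified in Table \ref{extremal}) with those of the Enriques type II surface, so that the \cite[Theorem 3.2.1]{Ko} construction transfers faithfully to the Coble setting. The lattice computation for the $R$-invariant is straightforward once the root sublattice $D_9$ is identified, but isolating the precise $D_9$ inside $L^{-}$ in the presence of the additional vector $\tilde B$ requires some care.
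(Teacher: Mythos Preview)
Your overall strategy matches the paper's: verify Vinberg's criterion on $\Gamma_{\rm II}$, invoke Propositions \ref{FiniteIndex} and \ref{Vinberg2}, and then obtain ${\rm Aut}(S)\cong S_4$ by the Jacobian-fibration argument of \cite[Theorem 3.2.1]{Ko}, with the $R$-invariant read off from the cover. The paper itself gives no more detail than this, citing \cite{DK} and \cite{Ko}.

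There is, however, a concrete factual error in your list of parabolic subdiagrams: $\Gamma_{\rm II}$ contains \emph{no} double edges (inspect Figure \ref{typeII}), so $\tilde{A}_1$ cannot occur as a subdiagram at all, and in particular neither $\tilde{E}_7\oplus\tilde{A}_1$ nor $\tilde{A}_7\oplus\tilde{A}_1$ is present. You have imported the list from type I. The maximal parabolic subdiagrams of $\Gamma_{\rm II}$ that the paper actually uses (see Section \ref{sec6}, case (2)) are $\tilde{D}_8$ and $\tilde{D}_5\oplus\tilde{A}_3$, corresponding to the extremal fibrations $({\rm I}_4^*,{\rm I}_1,{\rm I}_1)$ and $({\rm I}_1^*,{\rm I}_4,{\rm I}_1)$ from Table \ref{extremal}. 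This does not break your argument---Vinberg's criterion still holds once the correct subdiagrams are enumerated---but the specific fibrations you would feed into Proposition \ref{Jacobian2} to generate $S_4$ must be the type II ones, not the type I ones, so this confusion would propagate into the surjectivity step if left uncorrected.
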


\end{example}

\begin{example}\label{TypeV} {\bf A Coble surface of type V in characteristic 3 with 2 boundary components.}

We use the same notation as in Example \ref{TypeI2}.
In Kond\=o \cite[Example V]{Ko}, the author gave a complex Enriques surface $Y$ of type {\rm V} whose covering $K3$ surface $X$ is the minimal resolution of the double covering of ${\bf P}^1\times {\bf P}^1$ branched along the divisor $C_+ + C_- + \sum_{i=1}^2L_i^{\pm}$ of bi-degree $(4,4)$, where $C_{\pm}, L_i^{\pm}$ are defined by
$$C_{\pm}: (v_0 \pm v_1)(u_0 \pm u_1) = -2(u_0 \mp u_1)v_0;\quad  
L_1^{\pm}: u_0/u_1=\pm 1; \ L_2^{\pm}: v_0/v_1=\pm 1.$$
Also we used four curves
$$F_{1,\pm}: u_1/u_0=\pm \sqrt{-3}; \quad F_{2,\pm}: u_1/u_0=\pm \sqrt{-3}$$
to obtain $(-2)$-curves on the Enriques surface.

Now we assume that the ground field $k$ is an algebraically closed field in characteristic 3.
Then the curves $C_+$ and $C_-$ are given by the equations $u_0v_1 - u_1v_0 + u_1v_1=0$ and $u_0v_1-u_1v_0 - u_1v_1=0$, respectively, meeting at $q=(1:0, 1:0)$ with multiplicity 2.  Note that $q$ is a fixed point of $\phi$.  
The double covering of ${\bf P}^1\times {\bf P}^1$ has a rational double point of type $A_3$ over $q$ 
and hence
$X$ contains three $(-2)$-curves $E_0, E_1, E_2$ with $E_0\cdot E_1=E_0\cdot E_2=1$.
The involution $\sigma$ fixes $E_1, E_2$ point-wisely.
Thus we obtain a Coble surface $S= X/\la \sigma\ra$ with two boundary components $B_1, B_2$.  

Instead of $F_{1,\pm}, F_{2,\pm}$, we consider the curves
$$F_1: u_1=0;\quad F_2: v_1=0;\quad M: u_0v_1 + u_1v_0=0.$$
In fact $M$ did not appear in \cite[Fig. 5.1]{Ko}, but it corresponds to the $(-2)$-curve $E_{20}$ in
\cite[Fig. 5.5]{Ko}.
These three curves pass through the point $q$ and the images on $S$ are $(-1)$-curves meeting $B_1$ 
and $B_2$.  Thus we have four $(-1)$-roots with the dual graph in the following 
Figure \ref{typeV}, (C), which are associated with the images of $E_0$, $F_1, F_2, M$.
Moreover $S$ contains 16 $(-2)$-curves forming the dual graph 
in the Figure \ref{typeV}, (A), (B).  These 16 $(-2)$-curves are the same as in the case of the Enriques surface of type V.  Thus $S$ has the same dual graph $\Gamma_{\rm V}$ of 20 effective irreducible roots.
For the incidence relation between curves in (A), (B) and (C), we refer the reader to
Kond\=o \cite[Fig. 5.5]{Ko}.

\begin{figure}[htbp]
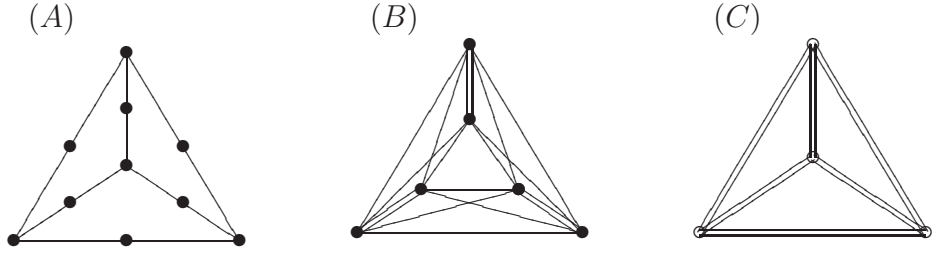

\begin{center}
$
\begin{array}{ccc}
 \hspace{-2cm} (A) &  \hspace{-1cm} (B) &  \hspace{-1cm} (C)\\

\xy
@={(00,0),(15,0),(30,0),(15,25),(15,10),(15,17.5),(7.5,5),(22.5,5),(7.5,12.5),(22.5,12.5)}@@{*{\bullet}};
(00,0)*{};(15,0)*{}**\dir{-};
(15,0)*{};(30,0)*{}**\dir{-};
(00,0)*{};(7.5,5)*{}**\dir{-};
(00,0)*{};(7.5,12.5)*{}**\dir{-};
(7.5,5)*{};(15,10)*{}**\dir{-};
(7.5,12.5)*{};(15,25)*{}**\dir{-};
(15,10)*{};(15,25)*{}**\dir{-};
(30,0)*{};(15,25)*{}**\dir{-};
(30,0)*{};(15,10)*{}**\dir{-};
\endxy 

& 
 \hspace{1cm}

\xy
@={(00,0),(30,0),(15,25),(15,15),(8.5,5.7),(21.5,5.7),}@@{*{\bullet}};
(00,0)*{};(30,0)*{}**\dir{-};
(00,0)*{};(8.5,5.7)*{}**\dir{=};
(00,0)*{};(15,25)*{}**\dir{-};
(00,0)*{};(15,15)*{}**\dir{-};
(00,0)*{};(21.5,5.7)*{}**\dir{-};
(15,25)*{};(15,15)*{}**\dir{=};
(15,25)*{};(8.5,5.7)*{}**\dir{-};
(15,25)*{};(21.5,5.7)*{}**\dir{-};
(30,0)*{};(15,25)*{}**\dir{-};
(30,0)*{};(15,15)*{}**\dir{-};
(30,0)*{};(21.5,5.7)*{}**\dir{=};
(30,0)*{};(8.5,5.7)*{}**\dir{-};
(8.5,5.7)*{};(21.5,5.7)*{}**\dir{-};
(15,15)*{};(21.5,5.7)*{}**\dir{-};
(15,15)*{};(8.5,5.7)*{}**\dir{-};
\endxy

 & 
 \hspace{1cm}
 
\xy
@={(00,0),(30,0),(15,25),(15,10)}@@{*{\circ}};
(00,0)*{};(30,0)*{}**\dir{=};
(00,0)*{};(15,10)*{}**\dir{=};
(00,0)*{};(15,25)*{}**\dir{=};
(30,0)*{};(15,10)*{}**\dir{=};
(30,0)*{};(15,25)*{}**\dir{=};
(15,10)*{};(15,25)*{}**\dir{=};
\endxy 
\end{array}
$
 \caption{The dual graph $\Gamma_{\rm V}$ of Type V}
 \label{typeV}
\end{center}
\end{figure}

\noindent
The dual graph satisfies the condition in Theorem \ref{Vinberg} and hence
${\rm Aut}(S)$ is finite (Proposition \ref{FiniteIndex}).  Moreover it follows from Proposition \ref{Vinberg2} that
$S$ has exactly twenty effective irreducible roots.
By considering Jacobian fibrations, we can prove
${\rm Aut}(S)$ is isomorphic to $S_4\times {\bf Z}/2{\bf Z}$.

\begin{proposition}\label{typeVprop}{\rm (Dolgachev and Kond\=o \cite[Theorem 9.8.8]{DK})}
Let $S$ be the Coble surface of type {\rm V} with two boundary components in characteristic $3$.
Then ${\rm Aut}(S)\cong S_4\times {\bf Z}/2{\bf Z}$ and  
the $R$-invariant of $S$ is $(E_7\oplus A_2\oplus A_1^{\oplus 2}, ({\bf Z}/2{\bf Z})^2)$.
\end{proposition}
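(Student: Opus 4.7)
The plan proceeds in three steps, matching the two assertions of the statement.

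First, I would verify that $W(\Gamma_{\mathrm V})$ has finite index in $\mathrm{O}(\mathrm{CM}(S))$, where $\Gamma_{\mathrm V}$ is the graph of the $20$ roots in Figure~\ref{typeV}. Non-degeneracy of $\Gamma_{\mathrm V}$ is automatic by Remark~\ref{Vinbergremark}, since parts (A) and (B) contain the components of reducible fibers of a special genus one fibration together with a special bi-section. To apply Vinberg's criterion (Theorem~\ref{Vinberg}), I would enumerate the connected parabolic subdiagrams of $\Gamma_{\mathrm V}$ and match each with a maximal parabolic subdiagram of rank $8$ coming from the reducible fibers of an extremal genus one fibration on $S$, using Table~\ref{extremal} and Proposition~\ref{quasi-ell} to list the admissible configurations in characteristic~$3$. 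Because parts~(A)--(B) reproduce exactly the dual graph of $(-2)$-curves on the type~V Enriques surface, where this matching is already carried out in \cite{Ko,Martin}, the same case check transfers. Hence $\mathrm{Aut}(S)$ is finite by Proposition~\ref{FiniteIndex}, and the $20$ roots in Figure~\ref{typeV} exhaust all effective irreducible roots by Proposition~\ref{Vinberg2}.

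Second, for the structure of $\mathrm{Aut}(S)$, consider the natural representation $\rho\colon \mathrm{Aut}(S)\to\mathrm{O}(\mathrm{CM}(S))$. Its image preserves the set of $20$ effective irreducible roots (hence $\Gamma_{\mathrm V}$) together with the unordered pair $\{B_1,B_2\}$. Direct inspection of the graph-automorphism group of $\Gamma_{\mathrm V}$ respecting this data isolates a subgroup isomorphic to $S_4\times {\bf Z}/2{\bf Z}$: the $S_4$ factor arises from the tetrahedral symmetry of part~(C) (equivalently of (B)) and the ${\bf Z}/2{\bf Z}$ exchanges $B_1\leftrightarrow B_2$. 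To realize each generator as an actual automorphism, I would combine the linear symmetries of the branch divisor $C_++C_-+\sum L_i^\pm$ of the cover $X\to{\bf P}^1\times{\bf P}^1$ (coordinate sign changes and the swap of the two rulings) with translations by torsion sections of the Jacobian fibration $J(p)$ of a chosen special genus one fibration $p$; the latter are accessed through Proposition~\ref{Jacobian2} and the Mordell-Weil content of the characteristic~$3$ entries of Table~\ref{extremal}, following Martin's strategy. Injectivity of $\rho$ is standard: an automorphism acting trivially on $\mathrm{CM}(S)$ fixes each effective irreducible root and each $B_i$, hence acts trivially on the Picard group of the rational surface $S$, and must be the identity.

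Third, for the $R$-invariant, I would analyze $\langle h^-\rangle\subset L^-$ explicitly. Each of the $16$ $(-2)$-curves $C$ in parts~(A)--(B) is disjoint from every $B_i$, so $\pi^*(C)$ splits as $\tilde{C}^++\tilde{C}^-$ on $X$ and contributes $\delta^-_C=\tilde{C}^+-\tilde{C}^-$ of square $-4$. The intersection matrix of these vectors (divided by $2$) reproduces the Enriques type~V root lattice $E_7\oplus A_2$, since the incidence in parts~(A)--(B) is identical to the Enriques case. Each of the four $(-1)$-roots $2E+\tfrac12 B_1+\tfrac12 B_2$ contributes $\delta^-\in\{\tilde{B}_1+\tilde{B}_2,\,\tilde{B}_1-\tilde{B}_2\}$, giving two orthogonal $(-4)$-vectors that rescale to an additional $A_1^{\oplus 2}$ summand. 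Thus $K\cong E_7\oplus A_2\oplus A_1^{\oplus 2}$ of rank~$11$. To identify $H=\ker\gamma\subset K/2K$, I would track the classes $\tfrac12(\delta^++\delta^-)\in\mathrm{Pic}(X)$: the Enriques type~V relation gives one factor of ${\bf Z}/2{\bf Z}$, and a second factor arises from the $(-1)$-roots, whose $\delta^+=2E+\tilde{B}_1+\tilde{B}_2$ lies in $2\,\mathrm{Pic}(X)+L^-$, producing $H\cong({\bf Z}/2{\bf Z})^2$.

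The main obstacle is the second step: realizing the full group $S_4\times {\bf Z}/2{\bf Z}$ by genuine automorphisms of $S$ and excluding extra ones. Since $X$ is supersingular in characteristic~$3$, the complex Torelli theorem is unavailable, so the argument must combine the explicit geometry of the branch divisor with a careful Mordell-Weil analysis of several extremal Jacobian fibrations attached to parabolic subdiagrams of $\Gamma_{\mathrm V}$.
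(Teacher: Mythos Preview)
Your plan matches the paper's approach, which itself only sketches the argument and defers to \cite{DK}: verify Vinberg's criterion on $\Gamma_{\rm V}$, realize the automorphism group via Jacobian fibrations, and read off the $R$-invariant from the explicit roots. Two points deserve tightening.

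First, your identification of the ${\bf Z}/2{\bf Z}$ factor as the swap $B_1\leftrightarrow B_2$ is not justified. On an Enriques surface of type V, where there are no boundary components at all, ${\rm Aut}(Y)$ is already $S_4\times {\bf Z}/2{\bf Z}$; the ${\bf Z}/2{\bf Z}$ is an intrinsic symmetry of $\Gamma_{\rm V}$, not something produced by boundaries. Moreover every $(-1)$-root here has the form $2E+\tfrac12 B_1+\tfrac12 B_2$, so swapping $B_1$ and $B_2$ fixes each of them as classes in ${\rm CM}(S)$. You should instead identify ${\rm Aut}(\Gamma_{\rm V})$ directly (it is $S_4\times {\bf Z}/2{\bf Z}$, as in \cite{Ko}) and then realize generators via the explicit symmetries of the branch configuration and Mordell--Weil translations, exactly as the paper indicates.

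Second, in the $R$-invariant computation you establish $K\supseteq E_7\oplus A_2\oplus A_1^{\oplus 2}$ but not equality. The paper's method (visible in the MI and MII examples) is to bound $K$ from above using that $X$ is supersingular in characteristic $3$, so $\det(K)$ must be divisible by $3^2$ (Proposition~\ref{SSK3Pic} and Lemma~\ref{overlattice}), and then rule out strict overlattices by checking $q_{K/2K}$ against the constraint that $\gamma$ maps onto a rank-$10$ quotient. Similarly, $H\cong({\bf Z}/2{\bf Z})^2$ is pinned down not by the two ad hoc relations you name but by comparing the nullity of $q_{K/2K}$ with the fact that ${\rm CM}(S)\cong E_{10}$ (Proposition~\ref{CME10}), forcing $\dim H = {\rm rank}\,K - 10 + \dim({\rm something})$ as in the MI computation. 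Without these upper-bound arguments your determination of $(K,H)$ is incomplete.
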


\end{example}

\begin{example}\label{TypeVI3} {\bf A Coble surface of type VI in characteristic 3 with 5 boundary components.}

The Enriques surface of type VI in characteristic $p\ne 3, 5$ can be obtained as the quotient of the Hessian quartic surface associated 
with the Clebsch diagonal cubic surface $S$ in ${\bf P}^4$ defined by 
$$\sum_{i=1}^5 x_i=\sum_{i=1}^5 x_i^3 =0,$$
where $(x_1:\cdots :x_5)$ is homogeneous coordinates of ${\bf P}^4$.  
Its Hessian surface $H$ is given by
\begin{equation}\label{Hessian}
\sum_{i=1}^5 x_i=\sum_{i=1}^5 {1\over x_i} =0
\end{equation}
which is invariant under the Cremona transformation $c : (x_i)\to (1/x_i)$.
The Cremona transformation induces a fixed point free involution $\sigma$ of the minimal resolution $X$ of the Hessian quartic surface $H$ and 
the quotient surface $Y=X/\la\sigma\ra$ is the Enriques surface of type VI.
The surface $H$ has 10 lines $\ell_{ij}$ defined by $x_i=x_j=0$ and 10 nodes $p_{ijk}$ defined by
$x_i=x_j=x_k=0$ which are interchanged by $c$ as $c(p_{ijk}) = \ell_{mn}$ where
$\{i,j,k,l,m\}=\{1,2,3,4,5\}$.  Thus we have 10 $(-2)$-curves on $Y$ which are the images of ten exceptional curves (or ten lines).  The dual graph of 10 $(-2)$-curves is the Petersen graph (see Figure \ref{typeVI}, (A)).
The speciality of the Clebsch diagonal cubic is that it has ten Eckardt
points.  The hyperplane section $x_i+x_j=0$ of $H$ is $2\ell_{ij}$ plus 2 additional lines interchanged
by $c$.  Thus we have additionally ten $(-2)$-curves on $Y$ forming the dual graph
given in Figure \ref{typeVI}, (B). The dual graph of the obtained 20 $(-2)$-curves on $Y$ is of type $\Gamma_{\rm VI}$.

\begin{figure}[htbp]
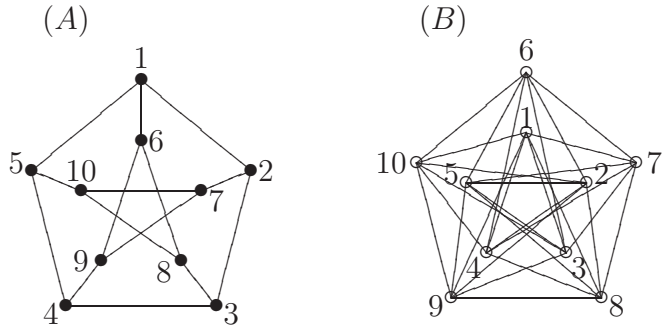

\begin{center}
$
\begin{array}{ccc}
 \hspace{-2cm} (A) &  \hspace{-1cm} (B) \\
\xy
@={(-10,-2),(10,-2),(-14.6,16),(14.6,16),(00,28),(-5.3,4),(5.3,4),(-8,13.3),(8,13.3),(00,20)}@@{*{\bullet}};
(-10,-2)*{};(10,-2)*{}**\dir{-};
(-10,-2)*{};(-14.6,16)*{}**\dir{-};
(-14.6,16)*{};(00,28)*{}**\dir{-};
(00,28)*{};(14.6,16)*{}**\dir{-};
(14.6,16)*{};(10,-2)*{}**\dir{-};
(-5.3,4)*{};(-10,-2)*{}**\dir{-};
(-5.3,4)*{};(8,13.3)*{}**\dir{-};
(-5.3,4)*{};(00,20)*{}**\dir{-};
(5.3,4)*{};(10,-2)*{}**\dir{-};
(5.3,4)*{};(-8,13.3)*{}**\dir{-};
(5.3,4)*{};(00,20)*{}**\dir{-};
(-14.6,16)*{};(-8,13.3)*{}**\dir{-};
(8,13.3)*{};(-8,13.3)*{}**\dir{-};
(8,13.3)*{};(14.6,16)*{}**\dir{-};
(00,20)*{};(00,28)*{}**\dir{-};
(00,31)*{1};(02,20)*{6};(16.6,16)*{2};(10,12)*{7};(3,2.6)*{8};(12,-3)*{3};(-12,-3)*{4};
(-8,4)*{9};(-16.6,17)*{5};(-8,16)*{10};
\endxy 

& 
 \hspace{1cm}

\xy
@={(-10,-2),(10,-2),(-14.6,16),(14.6,16),(00,28),(-5.3,4),(5.3,4),(-8,13.3),(8,13.3),(00,20)}@@{*{\circ}};
(-5.3,4)*{};(8,13.3)*{}**\dir{-};
(-5.3,4)*{};(00,20)*{}**\dir{-};
(5.3,4)*{};(-8,13.3)*{}**\dir{-};
(5.3,4)*{};(00,20)*{}**\dir{-};
(8,13.3)*{};(-8,13.3)*{}**\dir{-};
(-10,-2)*{};(10,-2)*{}**\dir{-};
(-10,-2)*{};(-14.6,16)*{}**\dir{-};
(-14.6,16)*{};(00,28)*{}**\dir{-};
(00,28)*{};(14.6,16)*{}**\dir{-};
(14.6,16)*{};(10,-2)*{}**\dir{-};
(-10,-2)*{};(-8,13.3)*{}**\dir{-};
(-10,-2)*{};(8,13.3)*{}**\dir{-};
(-10,-2)*{};(5.3,4)*{}**\dir{-};
(-10,-2)*{};(00,20)*{}**\dir{-};
(-14.6,16)*{};(-5.3,4)*{}**\dir{-};
(-14.6,16)*{};(8,13.3)*{}**\dir{-};
(-14.6,16)*{};(5.3,4)*{}**\dir{-};
(-14.6,16)*{};(00,20)*{}**\dir{-};
(10,-2)*{};(-8,13.3)*{}**\dir{-};
(10,-2)*{};(8,13.3)*{}**\dir{-};
(10,-2)*{};(-5.3,4)*{}**\dir{-};
(10,-2)*{};(00,20)*{}**\dir{-};
(14.6,16)*{};(-5.3,4)*{}**\dir{-};
(14.6,16)*{};(-8,13.3)*{}**\dir{-};
(14.6,16)*{};(5.3,4)*{}**\dir{-};
(14.6,16)*{};(00,20)*{}**\dir{-};
(00,28)*{};(-5.3,4)*{}**\dir{-};
(00,28)*{};(-8,13.3)*{}**\dir{-};
(00,28)*{};(5.3,4)*{}**\dir{-};
(00,28)*{};(8,13.3)*{}**\dir{-};
(00,31)*{6};(00,22)*{1};(17,16)*{7};(10,14.5)*{2};(7,2)*{3};(12,-3)*{8};(-12,-3)*{9};
(-7,2)*{4};(-18,16)*{10};(-10,14.5)*{5};
\endxy 
\end{array}
$
 \caption{The dual graph $\Gamma_{\rm VI}$ of Type VI}
 \label{typeVI}
\end{center}
\end{figure}

In characteristic 3, the surface $H$ defined by the equation (\ref{Hessian}) still exists, but the Cremona transformation $c$ has five fixed points  
$(-1:1:1:1:1), (1:-1:1:1:1), (1:1:-1:1:1), (1:1:1:-1:1), (1:1:1:1:-1)$ on $H$.
Also the hyperplane section $x_i+x_j=0$ of $H$ is $2\ell_{ij}$ plus a double line which is invariant under $c$.  Thus the induced involution $\sigma$ of $X$ has five fixed curves and ten invariant $(-2)$-curves.  The quotient surface $S=X/\la \sigma\ra$ is a Coble surface with five boundary components $B_1,\ldots, B_5$ and contains 10 $(-1)$-curves.
The surface $S$ has 10 $(-2)$-curves (the images of 10 nodes) whose dual graph is the one in Figure
\ref{typeVI}, (A).  The dual graph of 
ten $(-1)$-roots ${1\over 2}B_i + {1\over 2}B_j +2E$ is given in Figure \ref{typeVI}, (B).
We denote by $e_i$ (resp. $f_i$) the root corresponding to the vertex $i$ in the Figure (A)
(resp. Figure (B)).  Then $e_i \cdot f_j = 2\delta_{ij}$ where $\delta_{ij}$ is the Kronecker delta.
The dual graph satisfies the condition in Theorem \ref{Vinberg} and hence
${\rm Aut}(S)$ is finite (Proposition \ref{FiniteIndex}).  
Moreover it follows from Proposition \ref{Vinberg2} that
$S$ has exactly twenty effective irreducible roots.
Obviously the symmetric group $S_5$ of degree 5 acts on $H$ commuting with 
$c$ and hence on $S$ as automorphisms.

\begin{remark}\label{VIremark}
The surface $S$ is obtained from the rational elliptic surface with singular fibers
of type ${\rm I}_6, {\rm I}_3, {\rm III}$ with a section.  
For example, consider an elliptic fibration with singular fibers of type 
$\tilde{A}_5+\tilde{A}_2+2\tilde{A}_1$ defined by 
$$|e_1+e_2+e_3+e_4+e_9+e_6| =|f_{5}+f_{7}+f_{8}| = |2(e_{10}+f_{10})|.$$
Contracting three $(-1)$-curves in $\tilde{A}_2$ and one $(-1)$-curve in $\tilde{A}_1$
and then contracting the image of the $(-2)$-curve $e_{10}$, we get a minimal elliptic fibration
$\pi$ with singular fibers of type ${\rm I}_6, {\rm I}_3, {\rm III}$.  
We remark that such a Jacobian rational elliptic fibration in characteristic 3 is unique
(Ito \cite{Ito}).  
\end{remark}

\begin{proposition}\label{typeV3prop}{\rm (Dolgachev and Kond\=o \cite[Theorem 9.8.10]{DK})}
Let $S$ be the Coble surface of type {\rm VI} with five boundary components in characteristic $3$.
The surface $S$ has the automorphism group ${\rm Aut}(S)$ isomorphic to $S_5$.  
The $R$-invariant of $S$ is $(E_6\oplus D_5, {\bf Z}/2{\bf Z})$.
\end{proposition}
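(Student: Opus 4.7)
The plan is to prove the two claims separately. The construction in Example~\ref{TypeVI3} already supplies an embedding $S_5 \hookrightarrow \Aut(S)$ coming from coordinate permutations on $\mathbb{P}^4$, which preserve the Hessian $H$, commute with the Cremona involution $c$, and therefore descend to $S$. Since finiteness of $\Aut(S)$ has been established there via Propositions~\ref{FiniteIndex} and~\ref{Vinberg2}, what remains is to prove $|\Aut(S)|\leq |S_5|$ and to compute the $R$-invariant.

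For the upper bound, I would study the action of $\Aut(S)$ on the $20$ effective irreducible roots, which are the only such roots by Proposition~\ref{Vinberg2}. The dual graph $\Gamma_{\rm VI}$ consists of two copies of the Petersen graph coupled by the pairing $e_i\cdot f_j=2\delta_{ij}$, so any graph automorphism is determined by an automorphism of the underlying Petersen graph, and $\Aut(\text{Petersen})\cong S_5$. The natural map $\rho:\Aut(S)\to \O({\rm CM}(S))$ therefore factors through a homomorphism $\Aut(S)\to \Aut(\Gamma_{\rm VI})\cong S_5$ whose image contains the constructed $S_5$, hence equals $S_5$. Triviality of $\ker(\rho)$ follows by the standard argument via a special elliptic fibration (for instance the one in Remark~\ref{VIremark}): any $g\in\ker(\rho)$ preserves the fiber class and the bi-section, so it acts on smooth fibers as translation by a section of the Jacobian, and by Proposition~\ref{Jacobian2} the extremal Jacobian has trivial Mordell--Weil group in characteristic~$3$, forcing $g=\mathrm{id}$.

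For the $R$-invariant, I would compute $\langle h^-\rangle$ directly from the list of $20$ effective irreducible roots. Each of the ten $(-2)$-curves contributes a $(-4)$-vector $\tilde C^+-\tilde C^-\in L^-$, and the Petersen incidences determine all pairwise products; a direct check identifies the sublattice they generate as $E_6(2)$. Each $(-1)$-root $2E+\tfrac12 B_i+\tfrac12 B_j$ contributes $\tilde B_i\pm \tilde B_j$ in the orthogonal rank-$5$ sublattice $\langle\tilde B_1,\ldots,\tilde B_5\rangle\cong A_1^{\oplus 5}$, with signs dictated by the geometry on the supersingular $K3$ cover, and the resulting sublattice is $D_5(2)$. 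The $E_6(2)$ and $D_5(2)$ summands are orthogonal because no $(-2)$-curve meets a boundary component, so $K=E_6\oplus D_5$. The subgroup $H$ is then the kernel of $\gamma:K/2K\to {\rm CM}(S)/2\,{\rm CM}(S)$, which a direct tabulation on generators shows to be $\mathbb{Z}/2\mathbb{Z}$.

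The principal obstacle will be the precise identification $\langle h^-\rangle=(E_6\oplus D_5)(2)$, ruling out larger root lattices of the same rank that could \emph{a priori} be generated by the same set of $(-4)$-vectors. I would handle this by choosing an explicit ordered basis among the $\delta^-$ vectors and computing the Gram matrix, exploiting the rigidity of the Petersen combinatorics and comparing with the Enriques type-VI invariant $(E_6\oplus A_4,\{0\})$, whose $A_4$ factor is enlarged here because the covering $K3$ is supersingular and contributes the five additional classes $\tilde B_1,\ldots,\tilde B_5$.
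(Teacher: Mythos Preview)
Your strategy is more detailed than the paper's own treatment, which simply cites \cite{DK} after observing in Example~\ref{TypeVI3} that $S_5$ acts on $S$ via coordinate permutations on $\mathbf{P}^4$. Bounding $\Aut(S)$ through its action on the twenty roots and then computing $(K,H)$ from the explicit $\delta^-$ is the right plan.

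There is, however, a factual error in your kernel argument. The Jacobian fibration with singular fibers $\mathrm{I}_6,\mathrm{I}_3,\mathrm{III}$ does \emph{not} have trivial Mordell--Weil group: the trivial lattice $U\oplus A_5\oplus A_2\oplus A_1$ has discriminant $36$, so the Mordell--Weil group has order~$6$ (in fact it is $\mathbf{Z}/6\mathbf{Z}$). Proposition~\ref{Jacobian2}, which you cite, concerns the construction of the Jacobian from the $K3$ cover, not the size of its Mordell--Weil group. A correct argument is more direct: any $g\in\ker\rho$ fixes each of the twenty distinct root classes in ${\rm CM}(S)$, hence fixes each of the ten $(-2)$-curves and each of the ten $(-1)$-roots as divisors. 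Fixing the six components of an $\tilde A_5$-configuration and their six pairwise intersection points forces $g$ to be the identity on that cycle of rational curves; combined with the individually fixed boundary components and $(-1)$-curves, one obtains enough pointwise-fixed curves to conclude $g=\mathrm{id}$. Alternatively, descend $g$ along the blow-down of Remark~\ref{VIremark} to an automorphism of the rational elliptic surface fixing every section and every fiber component.

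One simplification for the upper bound: you do not need to argue abstractly that $\Aut(\Gamma_{\rm VI})\cong S_5$. In this Coble surface the vertices of subgraph~(A) are $(-2)$-curves while those of~(B) are $(-1)$-roots, so $\Aut(S)$ automatically preserves this partition and therefore maps to $\Aut(\text{Petersen})=S_5$. Your $R$-invariant sketch is on the right track; the identification of the second summand as $D_5$ rather than the $A_4$ of the Enriques case is exactly the statement that the classes $\tilde B_1,\dots,\tilde B_5$ enlarge $L^-$ and that the $\delta^-$ attached to the $(-1)$-roots generate the index-$2$ sublattice $D_5(2)\subset A_1^{\oplus 5}$.
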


\end{example}

\begin{example}\label{TypeVI5} {\bf A Coble surface of type VI in characteristic 5 with one boundary component.}

We use the same notation as in Example \ref{TypeVI3}.  In characteristic 5, the Cremona transformation $c$ has a fixed point $q=(1:1:1:1:1)$.  The quotient surface $X/\la\sigma\ra$ is a Coble surface $S$ 
 with one boundary component.  
Since the lines $\ell_{ij}$ and the hyperplane sections $x_i+x_j=0$ 
do not pass through $q$, $S$ contains 20 $(-2)$-curves forming the dual graph $\Gamma_{\rm VI}$.

\begin{proposition}\label{typeVI5prop}{\rm (Dolgachev and Kond\=o \cite[Theorem 9.8.13]{DK})}
Let $S$ be the Coble surface of type {\rm VI} with one boundary component in characteristic $5$.
The surface $S$ has the automorphism group ${\rm Aut}(S)$ isomorphic to $S_5$.  
The $R$-invariant of $S$ is $(E_6\oplus A_4, \{ 0 \})$.
\end{proposition}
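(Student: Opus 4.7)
The plan is to assemble three ingredients that are nearly in hand from Example \ref{TypeVI5}: the explicit geometric model of $S$ as a quotient of the resolved Hessian, the complete description of its dual graph of effective irreducible roots, and an $S_5$-action coming from the ambient $\mathbb{P}^4$. Example \ref{TypeVI5} already shows that $S$ contains $20$ $(-2)$-curves forming the dual graph $\Gamma_{\mathrm{VI}}$, that these are the only effective irreducible roots by Vinberg's criterion (Theorem \ref{Vinberg}) together with Propositions \ref{FiniteIndex} and \ref{Vinberg2}, and that $\mathrm{Aut}(S)$ is therefore finite. What remains is to identify this group with $S_5$ and to compute the $R$-invariant.

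To realize $S_5 \hookrightarrow \mathrm{Aut}(S)$, I would note that the symmetric group on five letters acts on $\mathbb{P}^4$ by permuting coordinates, preserves the Hessian surface $H$ defined by (\ref{Hessian}), and commutes with the Cremona involution $c$. This action lifts to the minimal resolution $X$ of $H$, commutes with the covering involution $\sigma$, and descends faithfully to $S = X/\langle \sigma \rangle$; faithfulness is already visible on the ten $(-2)$-curves arising from the lines $\ell_{ij}$, on which $S_5$ acts as the full permutation group of unordered pairs. For the reverse inclusion, any automorphism of $S$ permutes the twenty effective irreducible roots and so induces an automorphism of $\Gamma_{\mathrm{VI}}$. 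The graph $\Gamma_{\mathrm{VI}}$ consists of the Petersen graph together with the companion graph linked by the pairing $e_i \cdot f_j = 2\delta_{ij}$, so every graph automorphism is determined by its action on the Petersen component, giving $\mathrm{Aut}(\Gamma_{\mathrm{VI}}) \cong S_5$. Thus the natural representation $\rho: \mathrm{Aut}(S) \to \mathrm{O}(\mathrm{CM}(S))$ has image contained in $S_5$ and containing the geometric $S_5$ produced above, so the image equals $S_5$. The kernel of $\rho$ is trivial by standard arguments: any element of the kernel fixes $\mathrm{Pic}(S)$ pointwise and, after lifting to $X$, fixes a sublattice of $\mathrm{Pic}(X)$ of signature $(1,9)$, hence is the identity.

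For the $R$-invariant, every $(-2)$-curve $C$ on $S$ is disjoint from the unique boundary $B$ by Lemma \ref{roots2}(2), so the pullback splits on the $K3$ cover as $\pi^{-1}(C) = \tilde{C}^+ \sqcup \tilde{C}^-$, two disjoint $(-2)$-curves interchanged by $\sigma$. For each such $C$ one obtains $\delta_C^- = \tilde{C}^+ - \tilde{C}^- \in L^-$ with $(\delta_C^-)^2 = -4$, and after a consistent choice of sign the intersection matrix of the twenty vectors $\delta_C^-$ is read off from $\Gamma_{\mathrm{VI}}$. A direct inspection matching the Petersen and Eckardt pieces of $\Gamma_{\mathrm{VI}}$ identifies the sublattice $\langle h^-\rangle$ with $(E_6 \oplus A_4)(2)$, giving $K \cong E_6 \oplus A_4$. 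Since there is a single boundary component, no $(-1)$-vectors from $\tilde{B}_i$ need be adjoined, and the only possible source of a nontrivial $H = \ker \gamma$ would be a relation $\sum_C \epsilon_C \tilde{C}^+ \in 2\mathrm{Pic}(X)$; checking that no such nontrivial relation holds yields $H = \{0\}$.

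The principal obstacle is precisely this last verification that $H = \{0\}$. The combinatorics match the type VI Enriques surface in characteristic $\neq 3,5$, which has the same invariant $(E_6 \oplus A_4, \{0\})$, but in our setting the covering $K3$ surface $X$ is supersingular, so by Proposition \ref{SSK3Pic} the discriminant group $\mathrm{Pic}(X)^*/\mathrm{Pic}(X)$ is $5$-elementary. One must argue that this $5$-torsion does not introduce extra $2$-torsion glue between $L^+$ and $L^-$, and in particular that the image of the twenty classes $\tilde{C}^+$ in $\mathrm{Pic}(X)/2\mathrm{Pic}(X)$ generates a subspace of the expected codimension.
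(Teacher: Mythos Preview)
The paper does not actually prove this proposition: after constructing $S$ in Example~\ref{TypeVI5} and noting that the twenty $(-2)$-curves form $\Gamma_{\rm VI}$, it simply cites \cite[Theorem 9.8.13]{DK}. So there is no in-paper argument to compare against, and your proposal goes considerably further than what the paper itself supplies.

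Your route to ${\rm Aut}(S)\cong S_5$ is essentially correct. The identification ${\rm Aut}(\Gamma_{\rm VI})\cong S_5$ holds because the Petersen part (A) is $3$-regular while (B) is $6$-regular, so no automorphism can exchange them, and the pairing $e_i\cdot f_j=2\delta_{ij}$ makes the action on (B) slave to that on (A). The gap is your triviality argument for $\ker\rho$: since $X$ is supersingular, ${\rm Pic}(X)$ has rank $22$, and fixing only a signature $(1,9)$ sublattice does not force an automorphism of $X$ to be the identity. A correct argument stays on the rational surface $S$: an element of the kernel fixes each of the twenty $(-2)$-curves as a curve (each being the unique irreducible effective representative of its class), hence fixes all their mutual intersection points, which is enough to rigidify $S$ after contracting to ${\bf P}^2$; alternatively lift to an automorphism of the quartic $H\subset{\bf P}^3$ fixing all ten nodes $p_{ijk}$, which visibly forces the identity in ${\rm PGL}_4$.

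For the $R$-invariant your outline has a more substantial gap. The vectors $\delta_C^-$ attached to the twenty $(-2)$-curves span a sublattice of $\langle h^-\rangle$, but in the supersingular situation $L^-$ has rank $12$, so you must still show that $\langle h^-\rangle$ contains nothing beyond what these twenty vectors generate; only then do you get $K=E_6\oplus A_4$ rather than merely $K\supseteq E_6\oplus A_4$. By contrast, your stated worry about $H$ is somewhat misplaced: once $K$ is pinned down, $\gamma$ is a map of ${\bf F}_2$-vector spaces into ${\rm CM}(S)/2{\rm CM}(S)\cong E_{10}/2E_{10}$ determined by the classes of the $(-2)$-curves in $E_{10}$, and this mod-$2$ picture is identical to the type~VI Enriques case; the $5$-elementary discriminant of ${\rm Pic}(X)$ is invisible to a $2$-torsion computation. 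So the real missing step is the upper bound on $K$, not the injectivity of $\gamma$.
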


\begin{remark}
We use the same notation as in Example \ref{TypeI2}.  In Kond\=o \cite[Example VI]{Ko}, the author
gave an Enriques surface $Y$ of type VI whose 
covering $K3$ surface $X$ is the minimal resolution of the double covering of ${\bf P}^1\times {\bf P}^1$ branched along the divisor $E + C_+ + C_-$ of bi-degree $(4,4)$, 
where $E, C_+, C_-$ are defined by
$$E: u_0^2v_0^2 + u_1^2v_1^2 - u_0^2v_1^2 + {5\over 3}u_1^2v_0^2 + {8\over 3}u_0u_1v_0v_1=0;$$
$$C_+: 4u_1v_0 = (u_0+u_1)(v_0-v_1);\quad C_-: -4u_1v_0 = (u_0-u_1)(v_0+v_1).$$

In characteristic 5, we replace $E$ by the curve defined by
$$u_0^2v_0^2 + u_1^2v_1^2 - u_0^2v_1^2 + u_0u_1v_0v_1=0$$
which has a node at $(0:1,1:0)$ fixed by the involution $\phi$.  
The obtained surface is a Coble surface of type VI with one boundary.
\end{remark}

\end{example}

\begin{example}\label{TypeVII} {\bf A Coble surface of type VII in characteristic 5 with one boundary component.}

The dual graph $\Gamma_{\rm VII}$ of the Enriques surface of type VII is given by the following
Figure \ref{VII}.  We remark that this diagram is the same as the one in Kond\=o \cite[Fig.7.7]{Ko}.

\begin{figure}[!htb]
 \begin{center}
  \includegraphics[width=50mm]{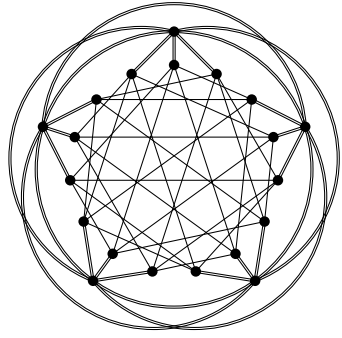}
 \end{center}
 \caption{The dual graph $\Gamma_{\rm VII}$ of Type VII}
 \label{VII}
\end{figure}

Ohashi \cite{MO1} found the following construction of the Enriques surface of type VII in 
characteristic 0 as an analogue of the Hessian quartic surface.
Consider the surface $X'$ in ${\bf P}^4$ defined by
$$\sum_{i<j} x_ix_j=\sum_{i<j} {1\over x_ix_j} =0$$
which has 5 nodes at $(1:0:0:0:0),\ldots, (0:0:0:0:1)$ and contains five curves of arithmetic genus 4 and with 4 nodes defined by $x_i=0$.  The Cremona transformation $c: (x_i)\to (1/x_i)$ changes
5 nodes and 5 curves.  Moreover $X'$ contains the two lines $\ell_{\pm}$
passing through the two points $(0:0:0:0:1)$ and $(1:-1:\pm \sqrt{-1}:\mp\sqrt{-1}:0)$.  
The Cremona transformation $c$ interchanges $\ell_+$ and $\ell_-$.  
By changing the coordinates $(x_1:\cdots : x_5)$ 
by the action of fifteen involutions $(ij)(kl)$ in $S_5$, we obtain
30 lines on $X'$ as the images of $\ell_{\pm}$.  
The pre-images of these 40 curves on the minimal resolution $X$ of $X'$ are 
40 $(-2)$-curves.  The induced involution $\sigma$ of $X$ is fixed point free and 
the quotient surface is the
Enriques surface of type VII containing 20 $(-2)$-curves forming the dual graph 
$\Gamma_{\rm VII}$.  This construction works well in any characteristic $p\ne 2,5$.

Now assume that the ground field $k$ is an algebraically closed field in characteristic 5.
Then $X'$ has additionally a node $(1:1:1:1:1)$ which is fixed by $c$.
Thus the induced involution $\sigma$ has a fixed curve on the minimal resolution $X$ of $X'$.
The quotient surface $X/\la\sigma\ra$ is a Coble surface $S$ with one boundary component. The above 20 $(-2)$-curves exist in this case, too.  
The dual graph satisfies the condition in Theorem \ref{Vinberg} and hence
${\rm Aut}(S)$ is finite (Proposition \ref{FiniteIndex}).  Moreover it follows from Proposition \ref{Vinberg2} that $S$ has exactly twenty effective irreducible roots.

\begin{proposition}\label{typeVIIprop}{\rm (Dolgachev and Kond\=o \cite[Theorem 9.8.15]{DK})}
Let $S$ be the Coble surface of type {\rm VII} with one boundary component in characteristic $5$.
The surface $S$ has the automorphism group ${\rm Aut}(S)$ isomorphic to $S_5$.  
The $R$-invariant of $S$ is $(A_9\oplus A_1, {\bf Z}/2{\bf Z})$.
\end{proposition}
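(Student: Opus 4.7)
The plan follows the template used for the preceding examples: produce an explicit embedding of $S_5$ into ${\rm Aut}(S)$, bound ${\rm Aut}(S)$ from above via its action on the dual graph of the twenty irreducible effective roots, and then extract the $R$-invariant from the double cover $\pi:X\to S$.

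For the lower bound, I would use the given construction. Coordinate permutations give a faithful $S_5$-action on ${\bf P}^4$ which preserves the defining equations of $X'$ and commutes with the Cremona transformation $c:(x_i)\mapsto (1/x_i)$; this action therefore lifts to $X$ commuting with $\sigma$ and descends to $S$. Faithfulness on $S$ follows because a nontrivial element of $S_5$ permutes the thirty lines on $X'$ (the $S_5$-orbit of $\ell_\pm$) nontrivially, and hence moves the corresponding $(-2)$-curves among the twenty on $S$.

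For the upper bound, Proposition \ref{Vinberg2} tells us that the twenty effective irreducible roots are the only such roots on $S$. Hence the natural homomorphism $\rho:{\rm Aut}(S)\to {\rm O}({\rm CM}(S))$ lands in the stabilizer of the graph $\Gamma_{\rm VII}$, whose group of combinatorial automorphisms is $S_5$ (by the same computation carried out for the Enriques surface of type VII). It remains to argue that $\ker \rho$ is trivial: any element of the kernel lifts to an automorphism of $X$ commuting with $\sigma$ and acting trivially on the image of ${\rm CM}(S)$ in ${\rm Pic}(X)$; combined with the fixed boundary component and the fact that $X$ is a supersingular $K3$ surface whose Picard lattice is generated by classes of curves visible in the construction, this forces the lift to be the identity or $\sigma$, and $\sigma$ descends to the identity on $S$.

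For the $R$-invariant, I would compute $L^- = \pi^*({\rm CM}(S))^\perp \subset {\rm Pic}(X)$ from the explicit geometry. Since $n=1$ and $S$ has no $(-1)$-roots, each of the twenty $(-2)$-curves $C$ on $S$ splits as $\tilde C^++\tilde C^-$ on $X$, contributing $\delta^+ = \tilde C^+ + \tilde C^-$ to $L^+$ and $\delta^- = \tilde C^+ - \tilde C^-$ to $L^-$; together with $\tilde B_1$ these span $L^\pm$. I would then inspect which $\delta^-$ belong to $h^-$ (i.e., which pairs $(\delta^+,\delta^-)$ have integral half-sum in ${\rm Pic}(X)$) using the combinatorics of $\Gamma_{\rm VII}$, show that $\la h^-\ra \cong (A_9\oplus A_1)(2)$, and compute the kernel of $\gamma: K/2K\to L^+/2L^+$ by reading off the linear relations among the $\delta^+$ modulo $2$. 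The hardest step will be this last one: the lattice-theoretic identification of $\la h^-\ra$ and of the isotropic subspace $H$ requires careful bookkeeping of which subsets of the twenty lifted curves yield half-integral classes on $X$, that is, effectively decoding the discriminant-form data of ${\rm CM}(S)$ from $\Gamma_{\rm VII}$. The automorphism calculation, by contrast, follows the pattern already laid out in Examples \ref{TypeI2}--\ref{TypeVI5}.
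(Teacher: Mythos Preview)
The paper does not supply its own proof of this proposition: after constructing $S$ and verifying finiteness of ${\rm Aut}(S)$ via Vinberg's criterion, it simply cites Dolgachev--Kond\=o \cite[Theorem 9.8.15]{DK}. So there is no detailed argument to match against; your outline is a reasonable reconstruction in the spirit of the other worked examples in Section~4.

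One correction on the $R$-invariant side. You write that the $\delta^-$ together with $\tilde B_1$ span $L^\pm$, but for $n=1$ the class $\tilde B_1$ lies in $L^-$ with $(\tilde B_1)^2=-2$, not $-4$, so it is not an element of $h^-$ and does not contribute the $A_1$ summand. The $A_1$ in $K=A_9\oplus A_1$ must come from the $(-2)$-curve configuration itself, exactly as in the Enriques case of type~VII (compare the remark in Section~6, case~(6), where the paper reads off $A_9\subset K$ directly from a chain of $(-2)$-curves). Note also that for every $(-2)$-curve $C$ the class $\delta^-=\tilde C^+-\tilde C^-$ is automatically in $h^-$, since $(\delta^++\delta^-)/2=\tilde C^+\in{\rm Pic}(X)$; the work is not in checking membership but in identifying the lattice these twenty vectors generate. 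Where the paper does compute $R$-invariants in detail (types MI and MII), it proceeds more indirectly than you propose: it exhibits an obvious root sublattice of $K$, uses the constraint that $\det K$ must be divisible by $p^{2}$ (Proposition~\ref{SSK3Pic} and Lemma~\ref{overlattice}) to exclude enlargements, and reads $H$ off from the nullity of $q_{K/2K}$ together with the rank of ${\rm CM}(S)$. That route avoids the bookkeeping you flag as the hardest step.
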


\end{example}

\begin{example}\label{TypeMI} {\bf A Coble surface of type MI in characteristic 3 with 2 boundary components.}

This surface was suggested by Mukai and Ohashi \cite{MO2}.  
First we give a definition of the dual graph of type $\Gamma_{\rm MI}$.
We denote by $(ij)$ the transposition of $i$ and $j$ 
($1 \leq i < j \leq 6$) which is classically called Sylvester's duad.
Six letters $1,2,3,4,5,6$ can be arranged in three pairs of duads,
for example, $(12, 34, 56)$, called Sylvester's syntheme
(it is understood that $(12, 34, 56)$ is the same as $(12, 56, 34)$ or $(34, 12, 56)$).
Duads and synthemes are in $(3,3)$ correspondence, that is, each syntheme consists
of three duads and each duad belongs to three synthemes (see \cite[page 4]{ATLAS}). 
A triad $(ijk)$ is a subset of $\{1,2,3,4,5,6\}$ of cardinality 3, up to complementary set.  There are ten triads.

In the following we use the notation  
$\{i,j,k,l,m,n\}=\{a,b,c,d,e,f\} = \{1,2,\ldots, 6\}$.
Let $\Gamma_{\rm MI}$ be a graph with 40 vertices indexed by duads $(ij)$, synthemes 
$(ij,kl,mn)$ and triads $(ijk)$.  We denote by $v_{ij}, \ v_{ij,kl},\ v_{ijk}$ the vertex indexed by 
$(ij)$, $(ij,kl,mn)$, $(ijk)$ respectively.
Two vertices $v_{ij}$ and $v_{kl}$ are joined by a single edge if and only if $|\{ i, j\}\cap \{ k,l\}|=1$.  Two vertices $v_{ij,kl}$ and $v_{ab, cd}$ are joined by a single edge 
if and only if the two synthemes $(ij,kl,mn)$ and $(ab,cd,ef)$ have no common duads. 
The graph $\Gamma_{\rm MI}^d$ of 15 vertices $v_{ij}$ and the one 
$\Gamma_{\rm MI}^s$ of 15 vertices $v_{ij,kl}$
are isomorphic and are given by the dual of the following graph with 6 vertices and 15 edges in Figure \ref{typeMI}.

\begin{figure}[htbp]
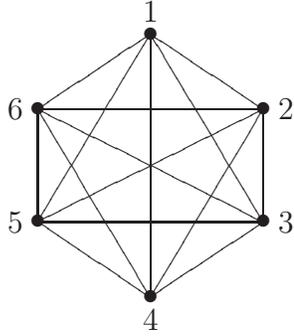

\begin{center}
\xy
(-50,0)*{};
@={(00,0),(00,35),(15,10),(-15,10),(15,25),(-15,25)}@@{*{\bullet}};
(00,0)*{};(00,35)*{}**\dir{-};
(00,0)*{};(15,10)*{}**\dir{-};
(00,0)*{};(-15,10)*{}**\dir{-};
(00,0)*{};(15,25)*{}**\dir{-};
(00,0)*{};(-15,25)*{}**\dir{-};
(00,35)*{};(15,10)*{}**\dir{-};
(00,35)*{};(-15,10)*{}**\dir{-};
(00,35)*{};(15,25)*{}**\dir{-};
(00,35)*{};(-15,25)*{}**\dir{-};
(15,10)*{};(-15,10)*{}**\dir{-};
(15,10)*{};(15,25)*{}**\dir{-};
(15,10)*{};(-15,25)*{}**\dir{-};
(-15,10)*{};(15,25)*{}**\dir{-};
(-15,10)*{};(-15,25)*{}**\dir{-};
(-15,25)*{};(15,25)*{}**\dir{-};
(0,-3)*{4};(0,38)*{1};(18,25)*{2};(18,10)*{3};(-18,10)*{5};(-18,25)*{6};
\endxy 
 \caption{The dual of $\Gamma_{\rm MI}^d$ and $\Gamma_{\rm MI}^s$}
 \label{typeMI}
\end{center}
\end{figure}

\noindent
Two vertices $v_{ij}$ and $v_{ab,cd}$ are joined by a double edge if and only if
the duad $(ij)$ is contained in the syntheme $(ab,cd,ef)$.  
We define a complete graph $\Gamma_{\rm MI}^t$ with the ten vertices $v_{ijk}$ indexed by triads and with double edges.
A vertex $v_{ij}$ is joined with four vertices $v_{ijk}, v_{ijl}, v_{ijm}, v_{ijn}$ by a double edge.  
A vertex $v_{ij,kl}$ is joined with four vertices $v_{ikm}, v_{ikn}, v_{ilm}, v_{iln}$ by a double edge.
Thus a vertex $v_{ijk}$ is joined with 6 vertices $v_{ij}$, $v_{ik}$, $v_{jk}$, $v_{lm}$,
$v_{ln}$, $v_{mn}$ by a double edge and with 6 vertices $v_{il,jm}$, $v_{il,jn}$, 
$v_{im,jl}$, $v_{im,jn}$, $v_{in,jl}$, $v_{in,jm}$ by a double edge.  
The obtained graph combining $\Gamma_{\rm MI}^d, \Gamma_{\rm MI}^s, \Gamma_{\rm MI}^t$ is denoted by
$\Gamma_{\rm MI}$.  Obviously the symmetric group $S_6$ of degree $6$ acts on $\Gamma_{\rm MI}$.  The automorphism group of $\Gamma_{\rm MI}$ is isomorphic to ${\rm Aut}(S_6)\cong
S_6\cdot {\bf Z}/2{\bf Z}$ where ${\bf Z}/2{\bf Z}$ is generated by an outer automorphism
of $S_6$ changing duads and synthemes.

Now we give a Coble surface in characteristic 3 with two boundary components and with 40 
irreducible effective roots forming the dual graph $\Gamma_{\rm MI}$. 
The following was given in Katsura, Kondo \cite[\S 4]{KatsuraKondo}. 
Consider the smooth quadric $Q={\bf P}^1({\bf F}_9)\times {\bf P}^1({\bf F}_9)$
defined over the finite field ${\bf F}_9$.  
Let $(u_0:u_1,v_0:v_1)$ be bi-homogeneous coordinates of $Q$ and let $B, B'$ be 
the non-singular rational curves in $Q$ defined by 
$$B: u_0v_0^3=u_1v_1^3, \quad B': u_0^3v_0= u_1^3v_1.$$
Let $\zeta=1-\sqrt{-1}$ be a primitive 8th root of unity.  
The curves $B$ and $B'$ meet at ten points
$$p_1=(1:\zeta, 1:-\zeta),\ p_2=(1:\zeta^2, 1:\zeta^{2}),\ p_3=(1:\zeta^3, 1:-\zeta^3),$$  
$$p_4=(1:-1,1:-1),\ p_5=(1:-\zeta,1:\zeta),\ p_6=(1:-\zeta^2, 1:-\zeta^2),$$
$$p_7=(1:-\zeta^3,1:\zeta^3),\ p_8=(1:1,1:1),\ p_9=(1:0,0:1), \ p_{10}=(0:1,1:0).$$

\noindent
There are 30 curves of bidegree $(1,1)$ on $Q$ which are identified with the duads and synthemes in the following way.  In the bracket we indicate the 
four points among the ten points $p_1,\ldots, p_{10}$ through which the curve passes.

\medskip

$(12): u_0v_0-u_1v_1=0\quad (p_4, p_8, p_9, p_{10}),$

$(13): u_0v_0-\zeta^3(u_0v_1+u_1v_0)+\zeta^2u_1v_1=0\quad (p_2,p_3,p_4,p_7),$

$(14): u_0v_0+\zeta(u_0v_1+u_1v_0)-\zeta^2u_1v_1=0\quad (p_1,p_2,p_5,p_8),$

$(15): u_0v_0+\zeta^2u_1v_0-u_1v_1=0\quad (p_3, p_5, p_6, p_9),$

$(16): u_0v_0+\zeta^2u_0v_1-u_1v_1=0\quad (p_1, p_6, p_7, p_{10}),$

$(23): u_0v_0-\zeta(u_0v_1+u_1v_0)-\zeta^2u_1v_1=0\quad (p_1,p_4,p_5,p_6),$

$(24): u_0v_0+\zeta^3(u_0v_1+u_1v_0)+\zeta^2u_1v_1=0\quad (p_3,p_6,p_7,p_8),$

$(25): u_0v_0-\zeta^2u_1v_0-u_1v_1=0\quad (p_1, p_2, p_7, p_9),$

$(26): u_0v_0-\zeta^2u_0v_1-u_1v_1=0\quad (p_2, p_3, p_5, p_{10}),$

$(34): u_0v_0+u_1v_1=0\quad (p_2, p_6, p_9, p_{10}),$

$(35): u_0v_0+u_0v_1+u_1v_1=0\quad (p_1, p_3, p_8, p_{10}),$

$(36): u_0v_0+u_1v_0+u_1v_1=0\quad (p_5, p_7, p_8, p_9),$

$(45): u_0v_0-u_0v_1+u_1v_1=0\quad (p_4, p_5, p_7, p_{10}),$

$(46): u_0v_0-u_1v_0+u_1v_1=0\quad (p_1, p_3, p_4, p_9),$

$(56): u_0v_1-u_1v_0=0\quad (p_2, p_4, p_6, p_8),$

$(12,34,56): u_0v_1+u_1v_0=0\quad (p_1, p_3, p_5, p_7),$

$(12,35,46): u_0v_0+u_0v_1-u_1v_0+u_1v_1=0\quad (p_2,p_5,p_6,p_7),$

$(12,36,45): u_0v_0-u_0v_1+u_1v_0+u_1v_1=0\quad (p_1,p_2,p_3,p_6),$

$(13,24,56): u_0v_0-\zeta^2u_1v_1=0\quad (p_1, p_5, p_9, p_{10}),$

$(13,25,46): u_0v_0-\zeta^3u_0v_1 + \zeta^2u_1v_1=0\quad (p_5, p_6, p_8, p_{10}),$

$(13,26,45): u_0v_0-\zeta^3u_1v_0 + \zeta^2u_1v_1=0\quad (p_1, p_6, p_8, p_9),$

$(14,23,56): u_0v_0+\zeta^2u_1v_1=0\quad (p_3, p_7, p_9, p_{10}),$

$(14,25,36): u_0v_0+\zeta u_0v_1 - \zeta^2u_1v_1=0\quad (p_3, p_4, p_6, p_{10}),$

$(14,26,35): u_0v_0+\zeta u_1v_0 - \zeta^2u_1v_1=0\quad (p_4, p_6, p_7, p_9),$

$(15,23,46): u_0v_0-\zeta u_0v_1 - \zeta^2u_1v_1=0\quad (p_2, p_7, p_8, p_{10}),$

$(15,24,36): u_0v_0+\zeta^3u_0v_1 + \zeta^2u_1v_1=0\quad (p_1, p_2, p_4, p_{10}),$

$(15,26,34): u_0v_0-\zeta^2(u_0v_1-u_1v_0)-u_1v_1=0\quad (p_1, p_4, p_7, p_8),$

$(16,23,45): u_0v_0-\zeta u_1v_0 - \zeta^2u_1v_1=0\quad (p_2, p_3, p_8, p_9),$

$(16,24,35): u_0v_0+\zeta^3u_1v_0 + \zeta^2u_1v_1=0\quad (p_2, p_4, p_5, p_9),$

$(16,25,34): u_0v_0+\zeta^2(u_0v_1-u_1v_0)-u_1v_1=0\quad (p_3, p_4, p_5, p_8).$

\medskip

Let $S$ be the surface obtained by blowing up ten points $p_1,\ldots, p_{10}$ of $Q$ and let $E_i'$ be the exceptional curve over the point $p_i$. 
We denote by the same symbols the proper transforms of the above curves.
Then $B^2=B'^2=-4$ and the 30 curves above are $(-2)$-curves.
The surface $S$ is a Coble surface with two boundary components $B, B'$.

Now we define ten $(-1)$-roots by
$$E_i= {1\over 2}B +{1\over 2}B' + 2E_i' \quad (1\leq i \leq 10).$$
The dual graph of $E_1,..., E_{10}$ is the complete graph with double edges.
We can identify 10 $(-1)$-roots $E_i$ with ten triads $(ijk)$ as follows:
$$(123): E_4,\ \ (124): E_8,\ \ (125): E_9,\ \ (126): E_{10}, \ \ 
(134): E_2,$$
$$(135): E_3,\ \ (136): E_7,\ \ (145): E_5, \ \ (146): E_{1},
\ \ (156): E_6.$$

Denote by $v_{ij}$, $v_{ij,kl}$, $v_{ijk}$ the classes of
thirty $(-2)$-curves and ten $(-1)$-roots $E_i$ in ${\rm CM}(S)$
by using the above identification.
It is now easy to see that the dual graph of 40 irreducible effective roots 
$\{ v_{ij}, v_{ij,kl}, v_{ijk}\}$ is $\Gamma_{\rm MI}$.

\begin{lemma}\label{parabolicMI}
Every connected parabolic subdiagram of $\Gamma_{\rm MI}$ is a component of the following maximal parabolic subdiagrams of $\Gamma_{\rm MI}:$
$$(1)\ \tilde{A}_5+2\tilde{A}_2+2\tilde{A}_1,\quad (2)\ \tilde{A}_4+\tilde{A}_4,\quad (3)\ \tilde{A}_3+\tilde{A}_3+2\tilde{A}_1 +\tilde{A}_1,\quad (4)\ \tilde{A}_2+\tilde{A}_2+\tilde{A}_2 + \tilde{A}_2.$$
Every maximal parabolic diagram defines a special genus one fibration. 
\end{lemma}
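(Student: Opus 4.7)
The plan is to combine the rich $S_6$-symmetry of $\Gamma_{\rm MI}$ with the explicit model in Example \ref{TypeMI} and the classification of extremal rational elliptic surfaces in characteristic $3$. The action of ${\rm Aut}(S_6)\cong S_6\cdot {\bf Z}/2{\bf Z}$ on $\Gamma_{\rm MI}$ has two vertex orbits (the $30$ duads-and-synthemes, merged by the outer involution, and the $10$ triads), and the induced subgraphs $\Gamma_{\rm MI}^d$ and $\Gamma_{\rm MI}^s$ are both isomorphic to the triangular graph on $6$ letters (equivalently, the line graph of $K_6$). This symmetry drastically cuts down the enumeration of connected parabolic subdiagrams.

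For the enumeration, I first observe that any single-edge cycle of $\Gamma_{\rm MI}$ lies entirely in $\Gamma_{\rm MI}^d$ or in $\Gamma_{\rm MI}^s$, because every edge between a duad and a syntheme is double. A direct combinatorial check in the triangular graph (where, up to the $S_6$-action, there is essentially only one family of cycles of each length) shows that the admissible $\tilde{A}_m$ cycles have $m\in\{2,3,4,5\}$, and that no $\tilde{D}_n$ or $\tilde{E}_k$ diagrams arise, since any vertex of the triangular graph has closed neighbourhood incompatible with the branching required for $\tilde{D}$ or $\tilde{E}$. Double edges, and the resulting components of type $\tilde{A}_1$ or multiple-fiber markers $2\tilde{A}_m$, come from duad-syntheme pairs, from incidences between triads and duads or synthemes, and from pairs of triads.

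Having enumerated the connected parabolic subdiagrams, I extend each to a maximal parabolic subdiagram. Since ${\rm CM}(S)\cong E_{10}$ by Proposition \ref{CME10}, the lattice has signature $(1,9)$ and the maximal rank is $n-1=8$. I claim the only four rank-$8$ extensions, up to ${\rm Aut}(S_6)$-symmetry, are the ones in the statement. The first three correspond, via Proposition \ref{Halphen} and Lemma \ref{fibration}, to the extremal Jacobian rational elliptic surfaces in characteristic $3$ of types $(I_6,I_3,III)$, $(I_5,I_5,I_1,I_1)$, and $(I_4,I_4,I_2,I_2)$ from Table \ref{extremal}, with the multiple components arising from the fiber blown up in producing $S$ from its Halphen model. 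The fourth type $\tilde{A}_2+\tilde{A}_2+\tilde{A}_2+\tilde{A}_2$ is special to the Coble setting: although no Jacobian rational elliptic fibration of type $(I_3,I_3,I_3,I_3)$ exists in characteristic $3$, one obtains a genus one fibration on $S$ whose reducible fibers trace out this diagram, because the Halphen reduction involves a multiple fiber that distorts the naive Jacobian picture. Its existence is witnessed directly by the concrete model in Example \ref{TypeMI}.

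For the second assertion, a rank-$8$ parabolic subdiagram in ${\rm CM}(S)\otimes{\bf R}$ has a one-dimensional radical, spanned by a primitive isotropic class $f\in D(S)\cap{\rm CM}(S)$ by Proposition \ref{NefCone}. Lemma \ref{fibration-isotropic} then produces a genus one fibration $p:S\to{\bf P}^1$ whose reducible fibers realize the connected components of the subdiagram, with multiplicities as indicated. To confirm specialness, I exhibit, for each of the four shapes, an effective irreducible root outside the subdiagram whose inner product with $f$ equals $2$; a triad vertex, or an appropriately chosen duad or syntheme, supplies such a bi-section in each case, which can be read off directly from the incidence data in Example \ref{TypeMI}. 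The main obstacle is the combinatorial bookkeeping in ruling out spurious maximal extensions in the enumeration step; the ${\rm Aut}(S_6)$-symmetry and the characteristic-$3$ extremality constraint from Table \ref{extremal} together should make this tractable.
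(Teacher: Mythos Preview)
Your approach is essentially the paper's---a direct combinatorial enumeration followed by exhibiting an explicit bi-section for each type---but you supply more organizational structure. The paper merely writes ``the assertion follows from a direct calculation'' and then lists one representative of each of the four types together with a bi-section vertex. Your observations that all single edges of $\Gamma_{\rm MI}$ lie within $\Gamma_{\rm MI}^d$ or $\Gamma_{\rm MI}^s$ (both copies of the Johnson graph $J(6,2)$), and that no $D_4$ subdiagram can appear because the independence number of every vertex-neighborhood in $J(6,2)$ is $2$, are correct and give a clean reason why only $\tilde A_m$ components occur.

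Two cautions. First, be careful with the appeal to Table~\ref{extremal}: Lemma~\ref{parabolicMI} is precisely the input to Theorem~\ref{Vinberg} and Proposition~\ref{FiniteIndex} that establishes ${\rm Aut}(S)$ is finite, so at this point you may not invoke Proposition~\ref{Jacobian} (extremality of the Jacobian) as a constraint on the possible maximal parabolic subdiagrams---that would be circular. The correspondence with extremal types should be a remark, not a proof device; the actual extension of each connected parabolic subdiagram to a maximal one must be done purely in the graph, as the paper does by example. Second, your explanation for type $(4)$ is not right: in characteristic $3$ the relevant fibration with four $\tilde A_2$ fibers is the quasi-elliptic one of type $({\rm IV},{\rm IV},{\rm IV},{\rm IV})$ from Proposition~\ref{quasi-ell}, not an elliptic Halphen fibration ``distorted by a multiple fiber''. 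This does not affect the combinatorial content of the lemma, but the geometric picture you sketch is inaccurate. Also note that the prefix $2$ in $2\tilde A_2$ and $2\tilde A_1$ records that the corresponding fiber is a half-fiber (multiple), which is geometric information about the divisor class, not a statement about double edges in the graph; your sentence linking ``multiple-fiber markers $2\tilde A_m$'' to double edges conflates the two.
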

\begin{proof} 
The assertion follows from a direct calculation.  We give an example of each type.  
The last vertex is a bi-section of the genus one fibration defined by this parabolic subdiagram.

\noindent
$(1) \ \la v_{ij}, v_{jk}, v_{kl}, v_{lm}, v_{mn}, v_{ni}\ra, \ 
\la v_{im,jl}, v_{il, jn}, v_{ik,jm}\ra, \ \la v_{il,jm}, v_{ikm}\ra, \ v_{ij,km};$

\noindent
$(2) \  \la v_{ij}, v_{jk}, v_{kl}, v_{lm}, v_{mi}\ra,\ \la v_{ik,jm}, v_{il, jn}, v_{ik,jl}, v_{il,jm}, v_{in,jl}\ra,\ v_{ln};$

\noindent
$(3) \  \la v_{km}, v_{kn}, v_{ln}, v_{lm}\ra,\ \la v_{ik,jl}, v_{in, jm}, v_{il,jk}, v_{im,jn}\ra,\ 
 \la v_{ij}, v_{ij,kl}\ra,\ \la v_{ikl}, v_{imn}\ra, \ v_{ik};$

\noindent
$(4) \  \la v_{ij}, v_{jk}, v_{ki}\ra,\ \la v_{lm}, v_{mn}, v_{ln}\ra,\ 
\la v_{im,jn}, v_{in, jl}, v_{il,jm}\ra,\ \la v_{in,jm}, v_{il,jn}, v_{im,jl}\ra, \ v_{ij,kl}.$
\end{proof}

Since the 40 $(-2)$-classes are effective, by Lemma \ref{parabolicMI}, 
Theorem \ref{Vinberg} and Proposition \ref{FiniteIndex}, 
${\rm Aut}(S)$ is finite. Moreover it follows from Proposition \ref{Vinberg2} that $S$ has exactly forty effective irreducible roots. 
The group ${\rm Aut}(S)$ is isomorphic to the subgroup of ${\rm Aut}(Q)$ preserving $B+B'$.
This group is generated by an involution
$(u_0:u_1, v_0:v_1) \to (v_0:v_1,u_0:u_1)$ and
${\rm PGL}_2({\bf F}_9)$ acting on $Q={\bf P}^1\times {\bf P}^1$ by
$$
\left(
  \begin{array}{cc}
      a & b \\
      c & d     
    \end{array}
  \right)
: (u_0:u_1,v_0:v_1) \to (au_0+bu_1:cu_0+du_1, d^3v_0+c^3v_1:b^3v_0+a^3v_1).$$
Thus ${\rm Aut}(S)$ acts on $\Gamma_{\rm MI}$ faithfully and hence
${\rm Aut}(S) \subset {\rm Aut}(\Gamma_{\rm MI})$.
On the other hand, ${\rm Aut}(\Gamma_{\rm MI})$ is isomorphic to ${\rm Aut}(S_6)$.
Both groups have the same order 1440 and hence ${\rm Aut}(S)\cong {\rm Aut}(S_6)$.

Let
$L_i: u_1=\zeta^iu_0,\ L_i': v_1= \zeta^iv_0 \ (1\leq i\leq 8),\ L_9: u_1=0,\ L_{10}: u_0=0,\
L'_9: v_1=0,\ L'_{10}: v_0=0.$
Then $L_i$ (resp. $L_i'$) meets $B$ (resp. $B'$) at one of the above ten points with multiplicity 3.
By taking the double cover of $S$ branched along $B+B'$ we obtain a non-singular surface
which is nothing but the Fermat quartic surface $X$.  It is well known that the Fermat quartic surface
is a supersingular $K3$ surface with Artin invariant 1 (e.g. Shioda \cite{Shi}).
The pre-images of $B, B'$ are two skew lines and
those of $E_i'$ are ten lines meeting the two skew lines.
The pre-image of $L_i$ (resp. $L_i'$) splits into two $(-2)$-curves and each of the above 30 $(-2)$-curves splits into disjoint two $(-2)$-curves.  Thus we have $112\ (=2 + 10+40 + 60)$ $(-2)$-curves on $X$ which are lines on $X$.  

Now we calculate the $R$-invariant $(K,H)$.
There exists a Dynkin diagram of type $A_5$ in $\Gamma^d_{\rm MI}$, for example, the vertices indexed by $12, 23, 34, 45, 56$.  
Similarly there exists a Dynkin diagram of type $A_5$ in $\Gamma^s_{\rm MI}$.
Thus $K$ contains $A_5^{\oplus 2}\oplus A_1^{\oplus 2}$ where $A_1^{\oplus 2}$ 
is generated by $\tilde{B}+\tilde{B}', \tilde{B}- \tilde{B}'$.  
If $K\ne A_5^{\oplus 2}\oplus A_1^{\oplus 2}$, then
$K=E_6^{\oplus 2}$ because ${\rm det}(K)$ is divisible by $3^2$ (Proposition \ref{SSK3Pic}, 
Lemma \ref{overlattice}). However
$q_{E_6/2E_6}$ is non-singular and hence $H=\{0\}$. 
This contradicts the fact that ${\rm CM}(S)$ has rank 10. Therefore 
$K=A_5^{\oplus 2}\oplus A_1^{\oplus 2}$. 
Since ${\rm CM}(S)\cong E_{10}$ (Proposition
\ref{CME10}), ${\rm dim}\ |H|\geq 3$.  Since the nullity of $q_{K/2K}$ is 3,
we have $H\cong ({\bf Z}/2{\bf Z})^3$.  
We now have the following.

\begin{proposition}\label{CobleMI} {\rm (Dolgachev and Kond\=o \cite[Theorem 10.5.18]{DK})}
The surface $S$ is a Coble surface of type {\rm MI} in characteristic $3$ with two boundary components
whose covering $K3$ surface is the Fermat quartic surface.  It has forty effective irreducible roots forming the dual graph $\Gamma_{\rm MI}$. 
The automorphism group ${\rm Aut}(S)$ is isomorphic to 
${\rm Aut}(S_6)$ and the $R$-invariant $(K,H)$ is $(A_5^{\oplus 2}\oplus A_1^{\oplus 2}, ({\bf Z}/2{\bf Z})^3)$.
\end{proposition}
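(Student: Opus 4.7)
The plan is to collect the assertions made throughout the example into a single argument, using the explicit model on $Q = \mathbb{P}^1 \times \mathbb{P}^1$ in characteristic $3$ constructed above. First I would confirm the Coble surface structure: blowing up the ten intersection points $p_1, \ldots, p_{10}$ of $B$ and $B'$ makes the proper transforms disjoint with self-intersection $-4$, and a standard check shows that $B+B'$ is the unique anti-bicanonical divisor and that $|-K_S| = \emptyset$. The two boundary components are then $B, B'$, and by Lemma \ref{roots} the ten classes $E_i = \frac{1}{2}B + \frac{1}{2}B' + 2E_i'$ are $(-1)$-roots, since each exceptional $E_i'$ meets both boundary components transversally in one point.

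Second, I would verify the combinatorial content: the thirty proper transforms of the bidegree $(1,1)$ curves listed above are $(-2)$-curves (each passes through exactly four of the ten points), and a direct case analysis of the explicit equations gives that their incidence pattern matches the rules defining $\Gamma_{\rm MI}^d$ and $\Gamma_{\rm MI}^s$, while the computation $E_i \cdot E_j = 2$ for $i \neq j$ and the intersections $E_i \cdot v_{jk}$, $E_i \cdot v_{jk,lm}$ produce the claimed double edges of $\Gamma_{\rm MI}^t$ and the edges between $\Gamma_{\rm MI}^t$ and $\Gamma_{\rm MI}^d \cup \Gamma_{\rm MI}^s$ (using the identification of triads with $E_i$ given in the text). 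With these 40 effective roots in hand, Lemma \ref{parabolicMI} ensures that every connected parabolic subdiagram extends to a maximal one of rank $9$, so Theorem \ref{Vinberg} (see also Remark \ref{Vinbergremark}) gives that $W(\Gamma_{\rm MI})$ is of finite index in $\mathrm{O}(\mathrm{CM}(S))$. Proposition \ref{FiniteIndex} then yields that $\mathrm{Aut}(S)$ is finite, and Proposition \ref{Vinberg2} shows the 40 roots are exactly all effective irreducible roots.

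Third, I would compute $\mathrm{Aut}(S)$. Since the 40 roots generate $\mathrm{CM}(S) \otimes \mathbb{Q}$, the natural map $\rho : \mathrm{Aut}(S) \to \mathrm{Aut}(\Gamma_{\rm MI})$ has finite kernel and injects because $\mathrm{Aut}(S)$ permutes these roots. On the other hand, $\mathrm{Aut}(S)$ is identified with the subgroup of $\mathrm{Aut}(Q)$ preserving $B+B'$; the coordinate swap and the twisted $\mathrm{PGL}_2(\mathbb{F}_9)$-action displayed above give $1440$ explicit automorphisms, and since $|\mathrm{Aut}(\Gamma_{\rm MI})| = |\mathrm{Aut}(S_6)| = 1440$, both inclusions are equalities, hence $\mathrm{Aut}(S) \cong \mathrm{Aut}(S_6)$. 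For the covering K3, I would pull back the auxiliary divisor $\sum L_i + \sum L_i'$ to see that $X$ is the minimal resolution of a double cover of $Q$ with a suitable branch, and identify it with the Fermat quartic surface using the stated appearance of $112$ lines, which matches Shioda's characterization of the Fermat quartic as the supersingular K3 with Artin invariant $1$.

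Finally, for the $R$-invariant, I would exhibit an $A_5$-chain inside $\Gamma_{\rm MI}^d$ (e.g. $v_{12}, v_{23}, v_{34}, v_{45}, v_{56}$) and one inside $\Gamma_{\rm MI}^s$, and together with the classes $\tilde{B} \pm \tilde{B}'$ producing $A_1^{\oplus 2}$, this gives $K \supseteq A_5^{\oplus 2} \oplus A_1^{\oplus 2}$. By Proposition \ref{SSK3Pic} applied to the supersingular covering K3, $3^2 \mid \det K$, so the only alternative would be $K = E_6^{\oplus 2}$; but then $q_{E_6/2E_6}$ is non-singular forces $H = \{0\}$, contradicting $\mathrm{rank}\,\mathrm{CM}(S) = 10$ via Lemma \ref{overlattice}. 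Hence $K = A_5^{\oplus 2} \oplus A_1^{\oplus 2}$, and computing the nullity of $q_{K/2K}$ (which equals $3$) together with the rank constraint forces $H \cong (\mathbb{Z}/2\mathbb{Z})^3$. The main obstacle is the bookkeeping in step two, namely confirming that the thirty explicitly written conics and the ten $(-1)$-roots really obey the incidence rules dictated by the duad/syntheme/triad combinatorics; once this is done, the remaining assertions follow from general lattice-theoretic and reflection-group machinery already developed in the preceding sections.
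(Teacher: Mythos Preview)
Your proposal is correct and follows essentially the same route as the paper: the construction on $Q=\mathbb{P}^1\times\mathbb{P}^1$, the verification of the 40 roots and their dual graph, the finiteness via Lemma~\ref{parabolicMI} and Vinberg's criterion, the identification of $\mathrm{Aut}(S)$ through the order-$1440$ subgroup of $\mathrm{Aut}(Q)$ preserving $B+B'$, and the $R$-invariant computation all match the paper's argument. Two small points of precision: the paper identifies the covering $K3$ as the Fermat quartic directly (with the $112$ lines noted as a consequence rather than the method of identification), and for $\dim H\geq 3$ the paper invokes $\mathrm{CM}(S)\cong E_{10}$ from Proposition~\ref{CME10} rather than a vaguer ``rank constraint''; you should make that citation explicit.
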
 

\end{example}

\begin{example}\label{TypeMII} {\bf A Coble surface of type MII in characteristic 3 with 8 boundary components.}

This surface was also suggested by Mukai and Ohashi \cite{MO2}.  First we explain the dual graph 
$\Gamma_{\rm MII}$ with 40 vertices.
The graph $\Gamma_{\rm MII}$ is divided into three subgraphs $\Gamma_{0}, \Gamma_{1}, \Gamma_{2}$
such that $\Gamma_{0}$ (resp.  $\Gamma_{1}$) has sixteen vertices (resp. twelve vertices) and 
$\Gamma_{2}$ is a copy of $\Gamma_{1}$.  
We write sixteen vertices of $\Gamma_0$ by $v_{ij}$ $(1\leq i,j\leq 4)$ and join two distinct vertices
$v_{ij}$ and $v_{kl}$ by a single edge if and only if $i=k$ or $j=l$.
The graph $\Gamma_0$ is the dual of the following graph of eight vertices and sixteen edges given in Figure \ref{typeMII}:

\begin{figure}[htbp]
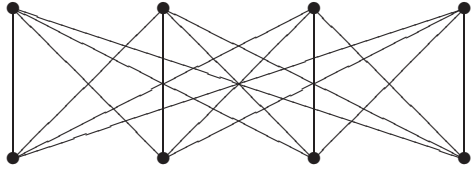

\xy
(-30,25)*{};
@={(00,20),(20,20),(40,20),(60,20),(00,0),(20,0),(40,0),(60,0)}@@{*{\bullet}};
(00,0)*{};(00,20)*{}**\dir{-};
(00,0)*{};(20,20)*{}**\dir{-};
(00,0)*{};(40,20)*{}**\dir{-};
(00,0)*{};(60,20)*{}**\dir{-};
(20,0)*{};(00,20)*{}**\dir{-};
(20,0)*{};(20,20)*{}**\dir{-};
(20,0)*{};(40,20)*{}**\dir{-};
(20,0)*{};(60,20)*{}**\dir{-};
(40,0)*{};(00,20)*{}**\dir{-};
(40,0)*{};(20,20)*{}**\dir{-};
(40,0)*{};(40,20)*{}**\dir{-};
(40,0)*{};(60,20)*{}**\dir{-};
(60,0)*{};(00,20)*{}**\dir{-};
(60,0)*{};(20,20)*{}**\dir{-};
(60,0)*{};(40,20)*{}**\dir{-};
(60,0)*{};(60,20)*{}**\dir{-};
\endxy 
 \caption{the dual of $\Gamma_0$}
 \label{typeMII}
\end{figure}

Let $A_4$, $S_4$ be the alternating group and the symmetric group of degree 4.
The vertices of $\Gamma_1$ and $\Gamma_2$ are indexed by elements of 
$A_4$ and $S_4\setminus
A_4$, respectively,  as follows.  
The graph $\Gamma_1=\{v_{\tau} \ : \ \tau\in A_4\}$
(resp. $\Gamma_2 =\{v_{\tau} \ : \ \tau\in S_4\setminus A_4\}$) is divided into three complete graphs $\Gamma_{11}, \Gamma_{12}, \Gamma_{13}$ with four vertices 
(resp. $\Gamma_{21}, \Gamma_{22}, \Gamma_{23}$).  
Let $K_1$ be the 2-elementary subgroup of $A_4$ of order 4 and
let $K_1, K_2, K_3, K'_1, K'_2, K'_3$ be the complete cosets of $K_1$ in $A_4$ such that
$A_4=K_1\cup K_2\cup K_3$:

$K_{1}=\{ id,\ (12)(34),\ (13)(24),\ (14)(23)\}, 
K_{2}=\{(123),\ (134),\ (142),\ (243) \},$

$K_{3}=\{(124),\ (132),\ (143),\ (234) \},
K'_{1}=\{(12),\ (34),\ (1324),\ (1423) \},$

$K'_{2}=\{(13),\ (24),\ (1234),\ (1432) \}, 
K'_{3}=\{(14),\ (23),\ (1243),\ (1342) \}.$

\noindent
Then
$\Gamma_{1i}= \{v_{\tau} \ : \ \tau\in K_i\}$, $\Gamma_{2j}= \{v_{\tau} \ : \ \tau\in K'_j\}$ $(i, j=1,2,3)$.  
For $\tau \in S_4$, denote by $G(\tau)$ the graph of the map 
$\tau$ on the set $\{1,2,3,4\}$. We join $v_\tau$ and $v_{\tau'}$ by 
$(2-\#(G(\tau)\cap G(\tau')))$-tuple edge.  Thus, for example, each $\Gamma_{ij}$ is a complete graph
with double edge.  A vertex $v_{ij}$ of $\Gamma_0$ and a vertex $v_\tau$ of $\Gamma_1$ or $\Gamma_2$ are joined by a double edge if and only if $(i,j)$ is contained in the graph $G(\tau)$.

Now we give a Coble surface in characteristic 3 with 40 effective irreducible roots 
whose dual graph is isomorphic to $\Gamma_{\rm MII}$.
Let $E$ be a curve defined by $y^2=x^3-x$ which is a supersingular elliptic curve in 
characteristic 3.  Let ${\rm Kum} (E\times E)$ be the Kummer surface associated with the product of 
$E$.  It is known that ${\rm Kum}(E\times E)$ is the supersingular $K3$ surface with 
Artin invariant 1 (e.g. Shioda \cite{Shi}).
The involution $1_E\times (-1_E)$ of $E\times E$ descends to an involution $\sigma$ of 
${\rm Kum}(E\times E)$.
Let $S$ be the quotient surface $X/\la \sigma\ra$.
The quotient of $E\times E$ by the group 
$\la 1_E\times (-1_E), \ (-1_E)\times 1_E\ra (\cong ({\bf Z}/2{\bf Z})^2)$ is isomorphic to 
${\bf P}^1\times {\bf P}^1$ and there is a morphism from $S$ 
to ${\bf P}^1\times {\bf P}^1$ which contracts sixteen $(-1)$-curves on $S$.
Let $p_1=(1:0),\ p_2=(0:1),\ p_3=(1:1),\ p_4=(1:-1)$ be ${\bf F}_3$-rational points of ${\bf P}^1$.
Let $\pi : S \to {\bf P}^1\times {\bf P}^1$ be the blow-up of 
the sixteen ${\bf F}_3$-rational points of 
${\bf P}^1\times {\bf P}^1$, $E_{ij}$ the exceptional curve over $(p_i, p_j)$, and 
$B_i, B'_i$ the proper transforms of $\{p_i\}\times {\bf P}^1, {\bf P}^1\times \{p_i\}$
$(1=1,...,4)$, respectively.  The configuration of $E_{ij}$ and $B_i, B'_i$ is 
as in the following Figure \ref{typeMIIKummer}:

\begin{center}
\begin{figure}[htbp]
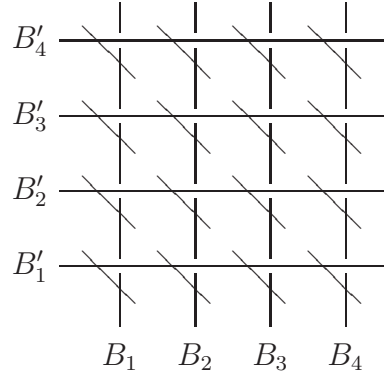

\xy
(-50,0)*{};
(-5,2)*{};(2,-5)*{}**\dir{-};
(-5,12)*{};(2,5)*{}**\dir{-};
(-5,22)*{};(2,15)*{}**\dir{-};
(-5,32)*{};(2,25)*{}**\dir{-};
(5,2)*{};(12,-5)*{}**\dir{-};
(5,12)*{};(12,5)*{}**\dir{-};
(5,22)*{};(12,15)*{}**\dir{-};
(5,32)*{};(12,25)*{}**\dir{-};
(15,2)*{};(22,-5)*{}**\dir{-};
(15,12)*{};(22,5)*{}**\dir{-};
(15,22)*{};(22,15)*{}**\dir{-};
(15,32)*{};(22,25)*{}**\dir{-};
(25,2)*{};(32,-5)*{}**\dir{-};
(25,12)*{};(32,5)*{}**\dir{-};
(25,22)*{};(32,15)*{}**\dir{-};
(25,32)*{};(32,25)*{}**\dir{-};
(00,-8)*{};(00,-1)*{}**\dir{-};
(00,1)*{};(00,9)*{}**\dir{-};
(00,11)*{};(00,19)*{}**\dir{-};
(00,21)*{};(00,29)*{}**\dir{-};
(00,31)*{};(00,35)*{}**\dir{-};
(10,-8)*{};(10,-1)*{}**\dir{-};
(10,1)*{};(10,9)*{}**\dir{-};
(10,11)*{};(10,19)*{}**\dir{-};
(10,21)*{};(10,29)*{}**\dir{-};
(10,31)*{};(10,35)*{}**\dir{-};
(20,-8)*{};(20,-1)*{}**\dir{-};
(20,1)*{};(20,9)*{}**\dir{-};
(20,11)*{};(20,19)*{}**\dir{-};
(20,21)*{};(20,29)*{}**\dir{-};
(20,31)*{};(20,35)*{}**\dir{-};
(30,-8)*{};(30,-1)*{}**\dir{-};
(30,1)*{};(30,9)*{}**\dir{-};
(30,11)*{};(30,19)*{}**\dir{-};
(30,21)*{};(30,29)*{}**\dir{-};
(30,31)*{};(30,35)*{}**\dir{-};
(-8,0)*{};(35,0)*{}**\dir{-};
(-8,10)*{};(35,10)*{}**\dir{-};
(-8,20)*{};(35,20)*{}**\dir{-};
(-8,30)*{};(35,30)*{}**\dir{-};
(0,-12)*{B_1};(10,-12)*{B_2};(20,-12)*{B_3};(30,-12)*{B_4};
(-12,0)*{B'_1};(-12,10)*{B'_2};(-12,20)*{B'_3};(-12,30)*{B'_4};
\endxy 
 \caption{the configuration of $E_{ij}$ and $B_i, B'_i$}
 \label{typeMIIKummer}
\end{figure}
\end{center}

\noindent
Then
$|-2K_{S}|=\left\{\sum_{i=1}^4 (B_i + B'_i)\right\}.$
Thus $S$ is a Coble surface with eight boundary components $\sum_{i=1}^4 (B_i + B'_i)$ and 
isomorphic to ${\rm Kum}(E\times E)/\la \sigma\ra$.  We have sixteen $(-1)$-roots
$$e_{ij}={1\over 2} B_i + {1\over 2} B'_j + 2E_{ij} \quad (1\leq i, j \leq 4).$$
Since $e_{ij}\cdot e_{kl} =1$ if $i=k$ or $j=l$ and $e_{ij}\cdot e_{kl} =0$ for the other 
$\{i,j\}\not= \{k,l\}$, the dual graph of $\{e_{ij}\}$ coincides with $\Gamma_0$.
Thus we identify $\{e_{ij}\}$ with $\{v_{ij}\}$. 
On the other hand, the proper transforms of the following 24 non-singular curves of bidegree $(1,1)$ in 
${\bf P}^1\times {\bf P}^1$ are $(-2)$-curves on $S$.  
We can identify these curves with the 24 vertices
in $\Gamma_1$ and $\Gamma_2$ as follows:  

\medskip
$v_{id}:\ u_0v_1-u_1v_0=0,\quad v_{(12)(34)}:\ u_0v_0+u_1v_1=0,$

$v_{(13)(24)}:\ u_0v_0-u_0v_1-u_1v_0-u_1v_1=0,$

$v_{(14)(23)}:\ u_0v_0+u_0v_1+u_1v_0-u_1v_1=0,$ 

$v_{(142)}:\ u_0v_0+u_0v_1+u_1v_1=0,\quad v_{(123)}:\ u_0v_0-u_1v_0+u_1v_1=0,$

$v_{(134)}:\ u_0v_0-u_0v_1+u_1v_0=0,\quad v_{(243)}:\ u_0v_1-u_1v_0-u_1v_1=0,$

$v_{(132)}:\ u_0v_0-u_0v_1+u_1v_1=0,\quad v_{(143)}:\ u_0v_0+u_0v_1-u_1v_0=0,$

$v_{(124)}:\ u_0v_0+u_1v_0+u_1v_1=0,\quad v_{(234)}:\ u_0v_1-u_1v_0+u_1v_1=0,$

$v_{(12)}:\ u_0v_0-u_1v_1=0,\quad v_{(34)} :\ u_0v_1+u_1v_0=0,$

$v_{(1423)}:\ u_0v_0+u_0v_1-u_1v_0+u_1v_1=0,$

$v_{(1324)}:\ u_0v_0-u_0v_1+u_1v_0+u_1v_1=0,$

$v_{(13)}:\ u_0v_0-u_0v_1-u_1v_0=0,\quad v_{(24)}:\ u_0v_1+u_1v_0+u_1v_1=0,$

$v_{(1432)}:\ u_0v_0+u_0v_1-u_1v_1=0,\quad v_{(1234)}:\ u_0v_0+u_1v_0-u_1v_1=0,$

$v_{(14)}: \ u_0v_0+u_0v_1+u_1v_0=0,\quad v_{(23)}:\ u_0v_1+u_1v_0-u_1v_1=0, $

$v_{(1342)}:\ u_0v_0-u_0v_1-u_1v_1=0,\quad v_{(1243)}:\ u_0v_0-u_1v_0-u_1v_1=0.$

\medskip
\noindent
For example, the vertex $v_{23}$ corresponds to $E_{23}$ the exceptional curve 
over the point $(p_2,p_3)$ and
$v_{(12)}$ corresponds to the conic passing through the four points $(p_1,p_2)$, $(p_2,p_1)$, $(p_3,p_3)$,
$(p_4,p_4)$ as in the following Figure \ref{typeMII2}:

\begin{figure}[htbp]
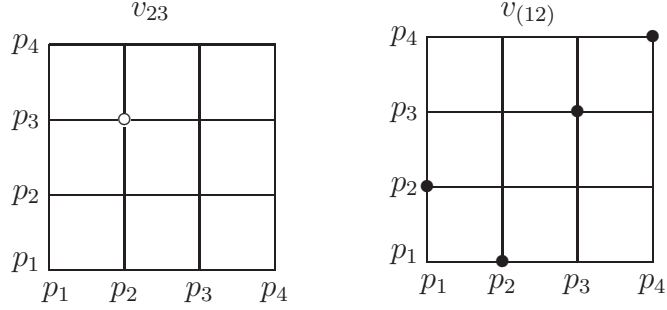

\begin{center}
$
\begin{array}{ccc}
 \hspace{0cm} v_{23} &  \hspace{1cm} v_{(1 2)} \\

\xy
@={(10,20)}@@{*{\circ}};
(00,0)*{};(30,0)*{}**\dir{-};
(00,0)*{};(00,30)*{}**\dir{-};
(10,0)*{};(10,19)*{}**\dir{-};
(10,21)*{};(10,30)*{}**\dir{-};
(20,0)*{};(20,30)*{}**\dir{-};
(30,0)*{};(30,30)*{}**\dir{-};
(00,10)*{};(30,10)*{}**\dir{-};
(00,20)*{};(9,20)*{}**\dir{-};
(11,20)*{};(30,20)*{}**\dir{-};
(00,30)*{};(30,30)*{}**\dir{-};
(1,-3)*{p_1};(10,-3)*{p_2};(20,-3)*{p_3};(30,-3)*{p_4};
(-3,1)*{p_1};(-3,10)*{p_2};(-3,20)*{p_3};(-3,30)*{p_4};
\endxy 

& 
 \hspace{1cm}

\xy
@={(00,10),(10,0),(20,20),(30,30)}@@{*{\bullet}};
(00,0)*{};(30,0)*{}**\dir{-};
(00,0)*{};(00,30)*{}**\dir{-};
(10,0)*{};(10,30)*{}**\dir{-};
(20,0)*{};(20,30)*{}**\dir{-};
(30,0)*{};(30,30)*{}**\dir{-};
(00,10)*{};(30,10)*{}**\dir{-};
(00,20)*{};(30,20)*{}**\dir{-};
(00,30)*{};(30,30)*{}**\dir{-};
(1,-3)*{p_1};(10,-3)*{p_2};(20,-3)*{p_3};(30,-3)*{p_4};
(-3,1)*{p_1};(-3,10)*{p_2};(-3,20)*{p_3};(-3,30)*{p_4};
\endxy

\end{array}
$
 \caption{$v_{23}$ and $v_{(12)}$}
 \label{typeMII2}
\end{center}
\end{figure}

\noindent
Thus we have the decompositions $\Gamma_1=\Gamma_{11}\cup \Gamma_{12}\cup \Gamma_{13}$ and
$\Gamma_2=\Gamma_{21}\cup \Gamma_{22}\cup \Gamma_{23}$:

$\Gamma_{11}=\{v_{id},\ v_{(12)(34)},\ v_{(13)(24)},\ v_{(14)(23)}\}, 
\Gamma_{12}=\{v_{(123)},\ v_{(134)},\ v_{(142)},\ v_{(243)} \},$

$\Gamma_{13}=\{v_{(124)},\ v_{(132)},\ v_{(143)},\ v_{(234)} \},
\Gamma_{21}=\{v_{(12)},\ v_{(34)},\ v_{(1324)},\ v_{(1423)} \},$

$\Gamma_{22}=\{v_{(13)},\ v_{(24)},\ v_{(1234)},\ v_{(1432)} \}, 
\Gamma_{23}=\{v_{(14)},\ v_{(23)},\ v_{(1243)},\ v_{(1342)} \}.$

\noindent

\begin{lemma}\label{parabolicMII}
Every connected parabolic subdiagram of $\Gamma_{\rm MII}$ is a component of the following maximal parabolic subdiagram of $\Gamma_{\rm MII}:$
$$(1)\ \tilde{A}_7+2\tilde{A}_1,\ (2)\ \tilde{A}_5+\tilde{A}_2 + 2\tilde{A}_1,\ (3)\ \tilde{A}_3+\tilde{A}_3+\tilde{A}_1 +\tilde{A}_1,\ (4)\ 2\tilde{A}_2+2\tilde{A}_2+\tilde{A}_2 + \tilde{A}_2.$$ 
Every maximal parabolic diagram defines a special genus one fibration. 
\end{lemma}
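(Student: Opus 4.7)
The argument follows the same pattern as in Lemma~\ref{parabolicMI}. My plan proceeds in three stages.

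First, I would verify the existence of each of the four listed maximal parabolic subdiagrams by exhibiting an explicit realization in $\Gamma_{\rm MII}$, analogous to the display at the end of the proof of Lemma~\ref{parabolicMI}. For (1), a natural $\tilde{A}_7$ arises as an $8$-cycle in $\Gamma_0$ obtained by alternating row- and column-moves through the $4\times 4$ grid (for example, $v_{11},v_{12},v_{22},v_{23},v_{33},v_{34},v_{44},v_{41}$); the remaining $2\tilde{A}_1$ component comes from a pair $v_{ij},v_\tau$ with $(i,j)\in G(\tau)$, chosen disjoint from the $8$-cycle. For (2), an $\tilde{A}_5$ appears as a $6$-cycle in $\Gamma_0$, an $\tilde{A}_2$ appears inside one of the subsets $\Gamma_{1i}$ or $\Gamma_{2j}$ as three mutually doubly-joined vertices, and the remaining $2\tilde{A}_1$ is chosen disjoint from these. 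Types (3) and (4) are constructed in the same fashion, using the combinatorial structure of $\Gamma_{\rm MII}$ recalled in the setup of the example.

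Second, since every edge in $\Gamma_{\rm MII}$ has multiplicity at most two, the only connected extended Dynkin diagrams that can occur as connected parabolic subdiagrams are of type $\tilde{A}_n$ $(n\geq 1)$, with $\tilde{A}_1$ corresponding to a doubly-joined pair. I would then classify such subdiagrams by enumerating cycles and doubly-joined pairs using the three edge rules (single edges inside $\Gamma_0$, multiplicity $2-\#(G(\tau)\cap G(\tau'))$ inside $\Gamma_1\cup\Gamma_2$, and a double edge between $\Gamma_0$ and $\Gamma_1\cup\Gamma_2$ when $(i,j)\in G(\tau)$). Each connected $\tilde{A}_n$ so obtained has to extend to one of (1)--(4); in particular, the enumeration must rule out $\tilde{A}_6$, $\tilde{A}_8$ and other components, since otherwise the associated extremal genus one fibration would violate the list in Table~\ref{extremal} together with Proposition~\ref{quasi-ell} in characteristic~$3$. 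This consistency with the classification of extremal rational genus one fibrations is a useful guide for identifying the correct list of four types.

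Third, for the \emph{special} genus one fibration assertion, in each of the four cases I would exhibit an effective irreducible root $r$ not in the parabolic subdiagram which has intersection number $1$ with exactly one or two suitably chosen components. A short calculation then shows that $r\cdot f = 2$ for the primitive isotropic vector $f$ associated to the parabolic diagram via Lemma~\ref{fibration-isotropic}, so $r$ is a special bi-section of the induced fibration.

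The main obstacle is the enumeration step itself: the graph $\Gamma_{\rm MII}$ has many short cycles, especially in the dense grid $\Gamma_0$ and inside the twelve-vertex pieces $\Gamma_1,\Gamma_2$, so a brute-force check would be painful. The cleanest approach is to exploit the large symmetry group of $\Gamma_{\rm MII}$ (the group $(S_4\times S_4)\cdot {\bf Z}/2{\bf Z}$ that will appear as ${\rm Aut}(S)$ in Theorem~\ref{mainth}) and to enumerate only orbit representatives, which reduces the case analysis to a handful of diagrams.
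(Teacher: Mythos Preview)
Your overall plan---direct enumeration, explicit examples, then a bi-section for each---is exactly what the paper does; its proof is literally ``direct calculation'' followed by one explicit instance of each of (1)--(4) together with a bi-section vertex. The suggestion to use the $(S_4\times S_4)\cdot{\bf Z}/2{\bf Z}$ symmetry to cut the enumeration to orbit representatives is sensible and implicit in the paper's treatment.

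However, two of your explicit constructions are wrong and would not survive the computation.

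\medskip
\noindent\textbf{The $\tilde{A}_1$ in type (1).} In any $\tilde{A}_7$ realized as an $8$-cycle in $\Gamma_0$, no three vertices can share a row or a column (that would force a chord), so the cycle uses exactly two vertices from each row and each column. Consequently every one of the eight remaining $\Gamma_0$ vertices shares a row or column with some cycle vertex and is joined to it. Your proposed $\tilde{A}_1=\langle v_{ij},v_\tau\rangle$ therefore cannot be disjoint from the $8$-cycle. The correct $\tilde{A}_1$ lies entirely inside $\Gamma_1\cup\Gamma_2$: one takes $v_\tau,v_{\tau'}$ with $G(\tau)\cap G(\tau')=\emptyset$ and with both $G(\tau),G(\tau')$ avoiding the eight grid positions of the cycle. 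For the paper's cycle $\langle v_{11},v_{41},v_{42},v_{32},v_{33},v_{23},v_{24},v_{14}\rangle$ the pair $\langle v_{(12)(34)},v_{(13)}\rangle$ works.

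\medskip
\noindent\textbf{The $\tilde{A}_2$ in types (2) and (4).} Three vertices of a single $\Gamma_{1i}$ (or $\Gamma_{2j}$) are pairwise joined by \emph{double} edges. Their Gram matrix has $(\alpha+\beta+\gamma)^2=3(-2)+6\cdot 2=+6>0$, so this triangle is hyperbolic, not parabolic; it is not an $\tilde{A}_2$. Genuine $\tilde{A}_2$'s in $\Gamma_1$ (resp.\ $\Gamma_2$) are single-edge triangles obtained by choosing one vertex from each of the three cosets $K_1,K_2,K_3$ (resp.\ $K_1',K_2',K_3'$) so that the three permutations pairwise agree in exactly one position; e.g.\ $\langle v_{(12)(34)},v_{(124)},v_{(142)}\rangle$. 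Likewise, $\tilde{A}_2$'s in $\Gamma_0$ are rows or columns of three vertices, as in $\langle v_{11},v_{21},v_{31}\rangle$.

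\medskip
\noindent A smaller point: your sentence ``since every edge has multiplicity $\leq 2$, only $\tilde{A}_n$ can occur'' is not a valid deduction, since $\tilde{D}_n$ and $\tilde{E}_k$ use only single edges, which are present in $\Gamma_{\rm MII}$. Their non-occurrence is part of what the direct check has to establish.
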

\begin{proof} The assertion follows from a direct calculation.
We give an example of each type.  The last vertex is a bi-section of the genus one fibration defined by
this parabolic subdiagram.

\noindent
$(1) \  \la v_{11}, v_{41}, v_{42}, v_{32}, v_{33}, v_{23}, v_{24}, v_{14}\ra, \ 
 \la v_{(12)(34)}, v_{(13)}\ra,$\ $v_{(132)}$;

\noindent
$(2) \  \la v_{11}, v_{31}, v_{32}, v_{22}, v_{23}, v_{13}\ra,\ \la v_{(12)(34)}, v_{(124)}, v_{(142)}\ra,\ \la v_{44}, v_{(12)}\ra,$ \ $v_{14}$;

\noindent
$(3) \  \la v_{11}, v_{21}, v_{22}, v_{12}\ra,\ \la v_{33}, v_{43}, v_{44}, v_{34}\ra,\ 
\la v_{(13)(24)}, v_{(14)(23)}\ra, \la v_{(1324)}, v_{(1423)}\ra,$ \ $v_{14}$;

\noindent
$(4) \  \la v_{11}, v_{21}, v_{31}\ra,\ \la v_{42}, v_{43}, v_{44}\ra,\ 
\la v_{(14)(23)}, v_{(124)}, v_{(134)}\ra,\ \la v_{(14)}, v_{(1324)}, v_{(1234)}\ra,$ \ $v_{14}$.
\end{proof}

Since the 40 $(-2)$-classes are effective, by Lemma \ref{parabolicMII}, 
Theorem \ref{Vinberg} and Proposition \ref{FiniteIndex}, 
${\rm Aut}(S)$ is finite.  Moreover it follows from Proposition \ref{Vinberg2} that $S$ has exactly forty effective irreducible roots.  The symmetry group of $\Gamma_{\rm MII}$ is isomorphic to
$(S_4\times S_4)\cdot {\bf Z}/2{\bf Z}$.
This group is realized by automorphisms
$({\rm PGL}_2({\bf F}_3)\times {\rm PGL}_2({\bf F}_3))\cdot \iota$ of $Q={\bf P}^1\times {\bf P}^1$
where $\iota$ is the switch of the first and the second factor of $Q$.  These automorphisms can be lifted automorphisms of $S$. Obviously ${\rm Aut}(S)$ acts on $\Gamma_{\rm MII}$ 
effectively and
hence  ${\rm Aut}(S)\cong (S_4\times S_4)\cdot {\bf Z}/2{\bf Z}$.

For the $R$-invariant $(K,H)$ of $S$, we obtain $D_8$ from 8 boundary components and 
$A_2^{\oplus 2}$ from $\Gamma_1, \Gamma_2$, 
and hence $K$ contains $D_8\oplus A_2^{\oplus 2}$.  Since ${\rm det}\ K$
is divisible by $3^2$  (Proposition \ref{SSK3Pic}, Lemma \ref{overlattice}), 
$K$ is isomorphic to $E_8\oplus A_2^{\oplus 2}$ or $D_8\oplus A_2^{\oplus 2}$.
In the first case $q_{K/2K}$ is non-singular and hence we have a contradiction.
Hence $K=D_8\oplus A_2^{\oplus 2}$.  
We can easily see that the nullity of $K/2K$ is 2 and hence 
$H\cong ({\bf Z}/2{\bf Z})^2$.  

\begin{proposition}\label{MII}{\rm (Dolgachev and Kond\=o \cite[Theorem 10.6.9]{DK})}
The surface $S$ is a Coble surface of type {\rm MII} in characteristic $3$ with eight boundary components whose double cover is the supersingular $K3$ surface with Artin invariant $1$.  
The surface $S$ has forty effective irreducible roots. The automorphism group 
${\rm Aut}(S)$ is isomorphic to $(S_4\times S_4)\cdot {\bf Z}/2{\bf Z}$ and 
the $R$-invariant $(K,H)$ is $(D_8\oplus A_2^{\oplus 2}, ({\bf Z}/2{\bf Z})^2)$.
\end{proposition}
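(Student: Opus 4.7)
The plan is to leverage the explicit construction of $S$ as the quotient ${\rm Kum}(E\times E)/\langle\sigma\rangle$, where $E$ is the supersingular elliptic curve $y^2=x^3-x$ in characteristic $3$ and $\sigma$ is induced by $1_E\times(-1_E)$. First I would verify the Coble surface structure: the map $\pi:S\to {\bf P}^1\times {\bf P}^1$ factors through the quotient of $E\times E$ by $\langle 1_E\times(-1_E),(-1_E)\times 1_E\rangle\cong({\bf Z}/2{\bf Z})^2$, so computing pullbacks of canonical divisors via the blow-up shows directly that $|-2K_S|=\{\sum_{i=1}^4(B_i+B'_i)\}$, confirming that $S$ is a Coble surface with eight boundary components. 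Since ${\rm Kum}(E\times E)$ is the supersingular $K3$ surface with Artin invariant $1$ (Shioda), this identifies the covering $K3$.

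Next I would prove that the $16$ classes $e_{ij}={1\over 2}B_i+{1\over 2}B'_j+2E_{ij}$ and the proper transforms of the $24$ curves of bidegree $(1,1)$ listed above are effective irreducible roots, and that their dual graph coincides with $\Gamma_{\rm MII}$. The intersection numbers of the $e_{ij}$ follow from Lemma \ref{roots2}, and the remaining intersections reduce to direct checks that each bidegree $(1,1)$ curve passes through the four blown-up points encoded by its corresponding permutation, as illustrated in Figure \ref{typeMII2}. Given the dual graph, Lemma \ref{parabolicMII} ensures every connected parabolic subdiagram extends to a maximal parabolic subdiagram of rank $9$, so Theorem \ref{Vinberg} gives that $W(\Gamma_{\rm MII})$ has finite index in ${\rm O}({\rm CM}(S))$ (using Remark \ref{Vinbergremark} for non-degeneracy). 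Proposition \ref{FiniteIndex} then forces ${\rm Aut}(S)$ to be finite, and Proposition \ref{Vinberg2} gives that these forty roots are all the effective irreducible roots.

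For the automorphism group I would note that ${\rm Aut}(S)$ acts faithfully on the set of forty effective irreducible roots and hence embeds into ${\rm Aut}(\Gamma_{\rm MII})\cong(S_4\times S_4)\cdot{\bf Z}/2{\bf Z}$. For the reverse inclusion, I would lift the automorphisms $({\rm PGL}_2({\bf F}_3)\times{\rm PGL}_2({\bf F}_3))\cdot\iota$ of $Q={\bf P}^1\times{\bf P}^1$ (where $\iota$ swaps the two factors); these preserve the sixteen ${\bf F}_3$-rational blown-up points and hence lift to automorphisms of $S$. A comparison of orders yields the isomorphism ${\rm Aut}(S)\cong(S_4\times S_4)\cdot{\bf Z}/2{\bf Z}$.

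Finally, to determine the $R$-invariant $(K,H)$, I would identify a $D_8$-sublattice of $K$ coming from the eight $(-2)$-curves $\tilde B_i,\tilde B'_j$ on $X$, and an $A_2^{\oplus 2}$-summand coming from two disjoint triples among $\Gamma_{11},\Gamma_{12},\Gamma_{13}$ and $\Gamma_{21},\Gamma_{22},\Gamma_{23}$, so $K\supseteq D_8\oplus A_2^{\oplus 2}$. Because $X$ is supersingular with Artin invariant $1$, Proposition \ref{SSK3Pic} and Lemma \ref{overlattice} force $\det(K)$ to be divisible by $3^2$, restricting $K$ to $D_8\oplus A_2^{\oplus 2}$ or $E_8\oplus A_2^{\oplus 2}$. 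The second possibility is excluded because $q_{E_8/2E_8}$ is non-singular, which would force $H=\{0\}$ and contradict ${\rm rank}\,{\rm CM}(S)=10$ via the overlattice computation. Hence $K=D_8\oplus A_2^{\oplus 2}$, and a direct check that the nullity of $q_{K/2K}$ equals $2$ together with the constraint ${\rm CM}(S)\cong E_{10}$ (Proposition \ref{CME10} or again Lemma \ref{overlattice}) pins down $H\cong({\bf Z}/2{\bf Z})^2$. The main obstacle is the bookkeeping in the $R$-invariant step, specifically the lattice-theoretic exclusion of $E_8\oplus A_2^{\oplus 2}$; the rest of the argument is essentially a verification that the construction fits the framework developed in earlier sections.
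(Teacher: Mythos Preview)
Your proposal is correct and follows essentially the same approach as the paper's own argument. One minor caveat: Proposition~\ref{CME10} does not cover the case $n=8$ in positive characteristic, but the determination of $H\cong({\bf Z}/2{\bf Z})^2$ only needs ${\rm rank}\,{\rm CM}(S)=10$ (true by definition) together with ${\rm rank}\,K=12$ and the nullity of $q_{K/2K}$ being $2$, which is exactly how the paper concludes.
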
 

\end{example}

\section{Possible dual graphs}\label{sec5}

In this section we prove the following theorem.

\begin{theorem}\label{dualgraph}
Assume ${\rm char}(k)\ne 2$.
Let $S$ be an Enriques or a Coble surface with finite automorphism group.
Then the dual graph $\Gamma$ of all effective irreducible roots on $S$ is of type
${\rm I}$, ${\rm II},\ldots , {\rm VII}$, ${\rm MI}$ or ${\rm MII}$.
\end{theorem}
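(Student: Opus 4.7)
The plan is to run the iterative procedure developed by the author and Martin, adapted to Coble surfaces. Let $p_0\colon S\to {\bf P}^1$ be any genus one fibration on $S$ (existence on an Enriques or Coble surface being standard). By Proposition \ref{Jacobian}, the Jacobian $j(p_0)$ is extremal, so by Table \ref{extremal} or Proposition \ref{quasi-ell} it has at least one reducible singular fiber; Proposition \ref{Halphen} together with Lemma \ref{fibration} translates this reducible fiber into effective irreducible roots on $S$. Applying Lemma \ref{specialfibration} (or Proposition \ref{Cossec} in the Enriques case) then produces a special genus one fibration $p\colon S\to {\bf P}^1$ whose bi-section is an effective irreducible root $r$.

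Next, for each of the finitely many possibilities for $j(p)$ in Table \ref{extremal} (respectively Proposition \ref{quasi-ell}), I would read off, via Proposition \ref{Halphen} and Lemma \ref{fibration}, the extended-Dynkin subdiagram of effective irreducible roots lying in the fibers of $p$ (using both $(-2)$-roots and $(-1)$-roots in the sense of Lemma \ref{roots} and Remark \ref{notationFib}); together with $r$ this yields a finite initial subgraph $\Gamma_0$ of the dual graph of roots on $S$. Every primitive isotropic class in $D(S)\cap\mathrm{CM}(S)$ that can be built from vertices of $\Gamma_0$ defines, by Lemma \ref{fibration-isotropic}, a new special genus one fibration $p'$, which by Proposition \ref{Jacobian} is again extremal. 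To detect new effective irreducible roots in the fibers or bi-sections of $p'$, I would invoke Proposition \ref{Jacobian2} to identify $J(p')$ with an extremal rational elliptic surface whose Mordell--Weil lattice is tabulated; the sections of $J(p')$ then pull back to $(-2)$-curves on $S$ or its covering $K3$ surface, replacing the complex-analytic input used in the author's original treatment \cite{Ko}.

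I would iterate this branching construction in a bounded breadth-first search. Each branch terminates either when no new primitive isotropic class is produced, in which case Vinberg's criterion (Theorem \ref{Vinberg}), verified via Remark \ref{Vinbergremark}, confirms that the reflection group of the accumulated graph $\Gamma$ has finite index in $\O(\mathrm{CM}(S))$, so that Proposition \ref{Vinberg2} upgrades $\Gamma$ to the full set of effective irreducible roots; or in a contradiction, typically a forced Jacobian fibration with singular-fiber list absent from Table \ref{extremal} or Proposition \ref{quasi-ell}. The surviving leaves are then matched against the nine graphs $\Gamma_{\rm I},\ldots,\Gamma_{\rm VII},\Gamma_{\rm MI},\Gamma_{\rm MII}$ constructed in Section \ref{sec4}. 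Martin's exclusion \cite[Lemma 10.12]{Martin} of the special elliptic fibrations with singular fibers $\mathrm{I}_6$, $2\mathrm{I}_3$, $2\mathrm{I}_2$ and $(2\mathrm{I}_3,2\mathrm{I}_3,\mathrm{I}_3,\mathrm{I}_3)$ is valid only on Enriques surfaces; on Coble surfaces these configurations do survive, and following them through produces precisely the two additional graphs $\Gamma_{\rm MI}$ and $\Gamma_{\rm MII}$, which is the essential point distinguishing the Coble classification from the Enriques one.

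The hard part will be the exhaustive combinatorial enumeration: for every starting row of Table \ref{extremal} (and of Proposition \ref{quasi-ell}), every intersection pattern of a bi-section with the reducible fibers, and every resulting child fibration, one must push the graph forward until either Vinberg's parabolic-maximality condition closes it off or an obstruction appears. The most delicate and genuinely new subcases are the branches descending from $(\mathrm{I}_9,\mathrm{II})$, $(\mathrm{I}_6,\mathrm{I}_3,\mathrm{III})$ and $(\mathrm{I}_3,\mathrm{I}_3,\mathrm{I}_3,\mathrm{I}_3)$ in characteristic $3$; here the Mordell--Weil calculation in Proposition \ref{Jacobian2} is indispensable for identifying the additional $(-2)$-curves on the covering $K3$ surface that complete $\Gamma_{\rm MI}$ and $\Gamma_{\rm MII}$.
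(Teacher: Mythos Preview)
Your proposal is correct and follows essentially the same strategy as the paper: start from a special extremal genus one fibration, iteratively enlarge the root configuration by passing to new special fibrations read off from the growing diagram, and close each branch either by Vinberg's criterion or by contradiction with Table~\ref{extremal}. One correction of detail: the genuinely new branches are not those descending from $(\mathrm{I}_9,\mathrm{II})$, but rather the five cases that in \cite{Ko} were settled by complex-analytic arguments, namely (4.18) $2\tilde{A}_2+2\tilde{A}_2+\tilde{A}_2+\tilde{A}_2$ and (4.23.2) $\tilde{A}_5+2\tilde{A}_2+2\tilde{A}_1$, which the paper shows lead to $\Gamma_{\rm MII}$ and $\Gamma_{\rm MI}$ respectively by an explicit step-by-step construction of all $40$ roots, together with (4.24.3), (4.24.4), (4.26), which are eliminated by producing Jacobian fibrations with forbidden fiber lists (e.g.\ $\mathrm{I}_4,\mathrm{I}_4,\mathrm{I}_2,\mathrm{III}$ or $\mathrm{I}_4,\mathrm{I}_4,\mathrm{III},\mathrm{III}$) or by Martin's Mordell--Weil argument via Proposition~\ref{Jacobian2}.
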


As a corollary we obtain the following known result by the author \cite[Theorem 4.1]{Ko}
over the complex numbers and by Martin \cite[Theorem 10.1]{Martin} in any characteristic
$\ne 2$.

\begin{corollary}
Assume ${\rm char}(k)\ne 2$.  Let $Y$ be an Enriques surface with finite automorphism group.
Then the dual graph $\Gamma$ of all $(-2)$-curves on $S$ is of type
${\rm I}$, ${\rm II},\ldots , {\rm VII}$.
\end{corollary}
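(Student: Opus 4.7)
The plan is to run the classification by induction on the configuration of effective irreducible roots, starting from one special genus one fibration and using its Jacobian together with the Mordell--Weil structure to produce new roots, then applying Vinberg's criterion to decide when to stop.

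First I would fix $S$ with finite automorphism group, and argue that $S$ must contain an effective irreducible root at all. Indeed, if there were none, the reflection group $W(S)$ would be trivial, so $W(S)$ could not be of finite index in $\mathrm{O}(\mathrm{CM}(S))$ (respectively $\mathrm{O}(\mathrm{Num}(Y))$), and then a standard argument using the arithmetic of the hyperbolic lattice forces $\mathrm{Aut}(S)$ to be infinite, contradicting our assumption. Applying Lemma \ref{specialfibration} (or Proposition \ref{Cossec} in the Enriques case), one obtains a special genus one fibration $p\colon S\to \mathbb{P}^1$ with a special bi-section $r$. By Proposition \ref{Jacobian}, this fibration is extremal, so its reducible fibers are constrained by Table \ref{extremal} (together with Proposition \ref{quasi-ell} when $p$ is quasi-elliptic) and by Lemma \ref{fibration} which describes how boundary components of a Coble surface reshape the fiber types.

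Next I would use Proposition \ref{Jacobian2} to pass to the Jacobian fibration $j(p)\colon J(S)\to\mathbb{P}^1$: the reducible fibers of $p$ (with the $(-1)$-root replacements of Remark \ref{notationFib}) match those of $j(p)$, so the Mordell--Weil lattice and the narrow Mordell--Weil group of $j(p)$ are known explicitly in each case of Table \ref{extremal}. Each section of $j(p)$ of appropriate type pulls back, via $r$ and the covering involution, to a new $(-2)$-curve or $(-1)$-root on $S$. The intersection numbers with the components of the reducible fibers of $p$ are computed from the height pairing, and each newly produced irreducible effective root can serve as the bi-section of yet another special fibration, which must again be extremal and hence is another candidate from Table \ref{extremal}. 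I would continue this bootstrapping until the collection of roots $\Gamma$ satisfies Vinberg's criterion (Theorem \ref{Vinberg}): every connected parabolic subdiagram embeds in a parabolic subdiagram of maximal rank $9$. At that point $W(\Gamma)$ has finite index, and by Proposition \ref{Vinberg2} (together with Remark \ref{Vinbergremark} for non-degeneracy) $\Gamma$ is the full set of effective irreducible roots on $S$.

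The main work, and the main obstacle, is then the combinatorial enumeration. I would start by fixing the type of the first special fibration and its Jacobian from Table \ref{extremal}, then by Lemma \ref{fibration} list the admissible positions of the boundary components inside its fibers; for each starting configuration the Mordell--Weil computation yields a finite tree of follow-up fibrations whose extremality must be checked against Table \ref{extremal} in the relevant characteristic, with the caveat (pointed out by Martin) that the fibrations with reducible fibers $\mathrm{I}_6$, $2\mathrm{I}_3$, $2\mathrm{I}_2$, and the $(2\mathrm{I}_3,2\mathrm{I}_3,\mathrm{I}_3,\mathrm{I}_3)$ configuration are \emph{forbidden} on an Enriques surface but \emph{allowed} on a Coble surface. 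Pruning the tree in the Enriques case then produces exactly the seven dual graphs $\Gamma_{\mathrm{I}},\ldots,\Gamma_{\mathrm{VII}}$, and keeping the extra branches in the Coble case yields in addition the two graphs $\Gamma_{\mathrm{MI}}$ and $\Gamma_{\mathrm{MII}}$ of Examples \ref{TypeMI} and \ref{TypeMII}. The case distinction is long, but each step is mechanical once the Mordell--Weil data and Vinberg's test are in hand; the delicate point is ensuring that no possible extension of the root configuration has been missed, which I would control by always checking, after each extension, whether the current $\Gamma$ already generates a finite-index subgroup, invoking Proposition \ref{Vinberg2} to terminate the process.
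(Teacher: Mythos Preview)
Your outline is correct in substance, but it is a reproof of Theorem \ref{dualgraph} rather than a proof of the Corollary. The paper's argument for the Corollary itself is two sentences: having already established Theorem \ref{dualgraph} (which applies uniformly to Enriques and Coble surfaces and yields the nine graphs $\Gamma_{\rm I},\ldots,\Gamma_{\rm VII},\Gamma_{\rm MI},\Gamma_{\rm MII}$), one only has to rule out $\Gamma_{\rm MI}$ and $\Gamma_{\rm MII}$ on an Enriques surface. By Lemmas \ref{parabolicMI} and \ref{parabolicMII}, each of these graphs contains a maximal parabolic subdiagram of type $\tilde{A}_5+2\tilde{A}_2+2\tilde{A}_1$ or $2\tilde{A}_2+2\tilde{A}_2+\tilde{A}_2+\tilde{A}_2$, hence would force a special fibration of type $({\rm I}_6,2{\rm I}_3,2{\rm I}_2)$ or $(2{\rm I}_3,2{\rm I}_3,{\rm I}_3,{\rm I}_3)$; Martin's Lemma 10.12 forbids both on an Enriques surface, and the Corollary follows.

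What you describe---running the full inductive enumeration of special extremal fibrations and pruning the branches that hit Martin's forbidden configurations---is precisely the strategy behind Theorem \ref{dualgraph} (and, in the Enriques case, Martin's own classification). So your route is valid, and the ingredients you invoke (Lemma \ref{specialfibration}/Proposition \ref{Cossec}, Proposition \ref{Jacobian}, Proposition \ref{Jacobian2}, Table \ref{extremal}, Theorem \ref{Vinberg}, Proposition \ref{Vinberg2}) are exactly the right ones. The difference is organizational: the paper first runs the enumeration \emph{without} pruning to get Theorem \ref{dualgraph}, and then uses the explicit parabolic structure of $\Gamma_{\rm MI}$ and $\Gamma_{\rm MII}$ to exclude them \emph{a posteriori} for Enriques surfaces. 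That separation makes the Corollary a one-line deduction rather than a repeat of the long case analysis.
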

\begin{proof}
Martin \cite[Lemma 10.12]{Martin} proved that 
there are no special elliptic fibrations on an Enriques surface with singular fibers of type ${\rm I}_6$, $2{\rm I}_3$, $2{\rm I}_2$ and the one with four singular fibers of type $2{\rm I}_3, 2{\rm I}_3, {\rm I}_3, {\rm I}_3$.  The assertion follows from this fact and Lemmas \ref{parabolicMI}, \ref{parabolicMII}.
\end{proof}

We use the following Lemmas frequently.

\begin{lemma}\label{MultipleFiber} Let $S$ be a Coble surface and let $p : S\to {\bf P}^1$ be a genus one fibration.  Let $F$ be a reducible multiple fiber of $p$.  Then

{\rm (1)}\ 
$F$ is of type $\tilde{A}_n$ $(n\geq 1)$.

{\rm (2)}\ Assume that $F$ consists of $(-2)$-curves.  Then $F$ is of type ${\rm I}_n$.
\end{lemma}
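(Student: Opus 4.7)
\emph{Plan.} The plan is to combine adjunction on $S$ with an analysis of the étale double cover $\pi:X\to S$ restricted to the reduced half-fiber $F_0$ of $F$.

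First, I would use Proposition~\ref{Halphen} to observe that a multiple fiber of $p$ can only arise from the half-fiber of an underlying Halphen surface $\bar p:H\to\mathbb{P}^1$ of index $2$ (the index $1$ case has no multiple fibers), and that the blow-ups realizing $S\to H$ take place in a different, \emph{reduced} singular fiber of $\bar p$. Hence $F$ is untouched by the blow-ups and is disjoint from the boundary divisor $B_1+\cdots+B_n$; in particular $\pi$ is étale over $F$. Writing $F=2F_0$, the linear equivalences $-2K_S\sim\sum B_i\sim 2F_0$ give $-K_S\equiv F_0$ in $\mathrm{Num}(S)$, so $C\cdot K_S=0$ for every component $C$ of $F_0$. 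Adjunction then forces $C\cong\mathbb{P}^1$ with $C^2=-2$ unless $C$ is smooth elliptic, and the latter is excluded as soon as $F$ is reducible. Thus every component of a reducible $F$ is automatically a $(-2)$-curve, so part~(2) follows from part~(1).

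Next, for part~(1) I would analyze the étale double cover $\pi^{-1}(F_0)\to F_0$, which is classified by the $2$-torsion line bundle $\mathcal{O}_S(-K_S)|_{F_0}\in\mathrm{Pic}(F_0)$. Using $-K_H\sim F_0$ on the Halphen surface together with the fact that the blow-ups $S\to H$ are disjoint from $F_0$, this restriction is identified with the normal bundle $\mathcal N_{F_0/S}$. The classical tame-multiple-fiber lemma, valid in characteristic $\ne 2$, asserts that $\mathcal N_{F_0/S}$ has order exactly equal to the fiber multiplicity, i.e.\ order $2$, in $\mathrm{Pic}(F_0)$; in particular it is nontrivial. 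Therefore $\pi^{-1}(F_0)$ is a \emph{connected} nontrivial étale double cover of $F_0$.

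Finally, in characteristic $\ne 2$ the only connected arithmetic-genus-one curves admitting a nontrivial étale double cover are the smooth elliptic curves and the cycles $I_n$ ($n\ge 1$); for all other Kodaira types ($II$, $III$, $IV$, $I_n^*$, $II^*$, $III^*$, $IV^*$) the underlying reduced curve is a tree of $\mathbb{P}^1$'s with simply-connected dual graph, so $H^1_{\mathrm{\acute et}}(F_0,\mathbb{Z}/2\mathbb{Z})=0$ and no such cover exists. Combined with the reducibility hypothesis, this forces $F_0$ to be of type $I_n$ with $n\ge 2$, i.e., $F$ is of type $\tilde A_{n-1}$ with $n-1\ge 1$, proving~(1) and completing~(2). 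The main obstacle is justifying the precise order of $\mathcal N_{F_0/S}$: this is a standard tameness statement for multiple fibers of genus-one fibrations in characteristic coprime to the multiplicity, but it must be invoked explicitly rather than derived from scratch.
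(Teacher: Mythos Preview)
Your argument is correct in outline and shares the paper's core idea: the preimage $\pi^{-1}(F_0)$ of the half-fiber on the $K3$ cover $X$ is a connected \'etale double cover of $F_0$, which forces $F_0$ to be of type $I_n$ (reducibility excludes the smooth elliptic case). Two points deserve comment.

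First, your justification of connectedness via the order of the normal bundle $\mathcal N_{F_0/S}$ is valid but heavier than needed. The paper observes directly that the covering involution $\sigma$ acts nontrivially on the base of the induced fibration $\tilde p:X\to\mathbb P^1$, so the preimage of the multiple fiber is a single $\sigma$-invariant fiber of $\tilde p$ and is therefore automatically connected. This avoids invoking the tame-multiple-fiber lemma entirely. Second, your intermediate claim $\sum B_i\sim 2F_0$ (hence $-K_S\equiv F_0$ in $\mathrm{Num}(S)$) is false: $\sum B_i$ is only a proper subdivisor of the fiber $F'$ containing the boundaries, so $\sum B_i\not\sim F'\sim 2F_0$. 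Fortunately this does not damage your argument, since the conclusion $C\cdot K_S=0$ for every component $C\subset F_0$ follows immediately from the disjointness of $F_0$ and $\sum B_i=-2K_S$ that you already established via Proposition~\ref{Halphen}. Finally, the paper also treats the case where $F$ contains a boundary component by appealing to Lemma~\ref{fibration}; you instead argue this case is vacuous for a genuine scheme-theoretic multiple fiber, which is correct, though the paper's extra case becomes relevant when ``multiple'' is read in the Coble--Mukai sense (fiber class divisible by $2$ in $\mathrm{CM}(S)$, as in the notation $2\tilde A_1$ used later).
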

\begin{proof}
Let $\pi : X\to S$ be the double covering branched along $B_1+\cdots +B_n$.
If $F$ contains no boundary components, then $\pi^{-1}(F)$ is connected and 
an \'etale double covering of $F$, and hence it is of type ${\rm I}_n$.  Thus we have the second assertion. In case that it contains a boundary component, the first assertion follows from 
Lemma \ref{fibration}.  
\end{proof}

\begin{lemma}\label{D4diagram}
The Dynkin diagram of type $D_4$ can be realized only as the dual graph of $(-2)$-curves.
\end{lemma}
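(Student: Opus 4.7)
The plan is to exploit Lemma \ref{roots2} in two stages: first show that in a $D_4$ diagram the central vertex and the three leaves must all be of the same type (all $(-2)$-roots or all $(-1)$-roots), then rule out the all-$(-1)$-root case by a pigeonhole argument on boundary components.

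First I would let $v_0$ be the central vertex of the $D_4$ diagram and $v_1, v_2, v_3$ the three leaves, so that $v_0 \cdot v_i = 1$ for $i = 1, 2, 3$ and $v_i \cdot v_j = 0$ for $i \neq j$. By Lemma \ref{roots2}(2), the intersection of a $(-1)$-root and a $(-2)$-root is even; since $v_0 \cdot v_i = 1$ is odd, $v_0$ and each $v_i$ must be of the same type. Hence either all four are $(-2)$-roots, or all four are $(-1)$-roots. The first case is exactly the conclusion of the lemma, so it remains to exclude the second case.

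Next I would assume for contradiction that all four vertices are $(-1)$-roots, writing $v_0 = 2E_0 + \tfrac{1}{2}B_a + \tfrac{1}{2}B_b$ and $v_i = 2E_i + \tfrac{1}{2}B_{a_i} + \tfrac{1}{2}B_{b_i}$ for $i = 1, 2, 3$. From the formula in the proof of Lemma \ref{roots2}, namely $\alpha \cdot \alpha' = 4 E \cdot E' + 4 - |\{B_1, B_2, B'_1, B'_2\}|$, one sees that $v_0 \cdot v_i = 1$ forces $v_0$ and $v_i$ to share exactly one boundary component, so each $v_i$ contains precisely one of $B_a, B_b$. On the other hand, $v_i \cdot v_j = 0$ together with the nonnegativity of $E_i \cdot E_j$ forces the four boundary components appearing in $v_i$ and $v_j$ to be pairwise distinct, i.e.\ $v_i$ and $v_j$ share no boundary component.

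The main (and only) obstacle is now a quick combinatorial contradiction: by pigeonhole, among the three leaves $v_1, v_2, v_3$ at least two of them, say $v_1$ and $v_2$, pick up the same element of $\{B_a, B_b\}$, so they share a boundary component, contradicting the conclusion of the previous paragraph. This completes the exclusion of the $(-1)$-root case and hence proves the lemma. No deeper input is needed beyond Lemma \ref{roots2}.
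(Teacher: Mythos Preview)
Your proof is correct and is essentially the same as the paper's. The paper phrases the second step contrapositively---if all four roots were $(-1)$-roots then the central root would need three distinct boundary components (one shared with each leaf), contradicting Lemma~\ref{roots}---but this is exactly your pigeonhole argument viewed from the other side.
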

\begin{proof}
Let $C$ be a $(-2)$-curve and let $E$ be a $(-1)$-curve meeting with two boundaries $B, B'$.  Then 
$C\cdot (2E + {1\over 2}B + {1\over 2}B')$ is an 
even number (Lemma \ref{roots2}, (2)) and hence
a diagram of type $D_4$ is generated by only $(-2)$-curves or by only $(-1)$-roots.
If $D_4$ is generated by only $(-1)$-roots, then the central $(-1)$-root of $D_4$ should contain three boundary components by Lemma \ref{roots2}, (1) which is a contradiction.
\end{proof}

\begin{lemma}\label{NonMultipleFiber}
Let $S$ be a Coble surface and let $p : S\to {\bf P}^1$ be a genus one fibration.  
Assume that $F$ is a fiber of type ${\rm II}^*, {\rm III}^*, {\rm IV}^*,
{\rm I}_n^*$.  Then a special bi-section meets only the simple components of $F$.
\end{lemma}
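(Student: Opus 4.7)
The plan is to reduce the claim to a numerical statement and then split by the nature of $r$. First, by Lemma~\ref{MultipleFiber} multiple fibers on a Coble surface are of type $\tilde A_n$, and by Lemma~\ref{fibration} every fiber of $p$ that contains a boundary component is likewise of type $\tilde A_m$. Hence a fiber $F$ of type $\mathrm{II}^*$, $\mathrm{III}^*$, $\mathrm{IV}^*$ or $\mathrm{I}_n^*$ is non-multiple and all its components are $(-2)$-curves. Writing $F=\sum m_iC_i$ with the $m_i$ the standard affine Dynkin multiplicities, the relation $r\cdot F=2$ becomes $\sum m_i(r\cdot C_i)=2$; since each term is a non-negative integer, $r\cdot C_i=0$ whenever $m_i\ge 3$. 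It remains to rule out $r\cdot C_j=1$ for a component $C_j$ with $m_j=2$.

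If $r=2E+\tfrac12 B_i+\tfrac12 B_j$ is a $(-1)$-root, this is automatic. For every $(-2)$-curve $C$ in $F$ the equality $r\cdot C=2(E\cdot C)$ (since boundary components are disjoint from $(-2)$-curves) makes $r\cdot C$ even, excluding $r\cdot C_j=1$; and $r\cdot C_j\ge 2$ together with $m_j=2$ would give $r\cdot F\ge 4$.

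The substantive case is $r=N$ a $(-2)$-curve, which I would treat via Proposition~\ref{Jacobian2}. Passing to the $K3$ cover, $\pi^{*}N=N^++N^-$ splits into two disjoint $(-2)$-curves, and $N^+$ is the zero section of the induced genus one fibration $\tilde p:X\to\mathbb{P}^1$. Because each of the dual graphs $\tilde E_8,\tilde E_7,\tilde E_6,\tilde D_{n+4}$ is a tree, $F$ is \'etale simply connected, so the \'etale double cover $\pi^{-1}(F)$ splits as a disjoint union $\tilde F_1\sqcup \tilde F_2$ of two isomorphic copies of $F$ with $\pi^{*}C_i=C_i^++C_i^-$, and each $\tilde F_k$ is a fiber of $\tilde p$ of the same Kodaira type as $F$ and with matching multiplicities. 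Since $N^+$ is a section, it meets each $\tilde F_k$ in a single simple component, so $N^+\cdot C_i^\pm=0$ for every $C_i$ with $m_i\ge 2$. The projection formula then gives $N\cdot C_i=N^+\cdot\pi^{*}C_i=N^+\cdot C_i^++N^+\cdot C_i^-=0$.

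The main technical obstacle lies in this last step: one must verify that the two halves $\tilde F_1,\tilde F_2$ of $\pi^{-1}(F)$ each function as a fiber of $\tilde p$, possibly over distinct points in its base after Stein factorisation, so that the standard principle ``a section meets each fiber in only one simple component'' applies to each half separately. This uses the simple connectedness of tree-type Kodaira fibers together with the precise statement of Proposition~\ref{Jacobian2}; once in hand, the projection-formula computation finishes the argument.
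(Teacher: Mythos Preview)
Your argument is correct and follows the same line as the paper's proof, which invokes Lemma~\ref{D4diagram} to see that $F$ consists of $(-2)$-curves, uses parity to force the bi-section to be a $(-2)$-curve, and then cites \cite[Lemma~4.3]{Ko}; you have simply unpacked that citation via the $K3$ cover. Your self-flagged ``technical obstacle'' is not one: since $F$ is neither multiple (Lemma~\ref{MultipleFiber}) nor contains a boundary component, it is not one of the two $\sigma$-invariant fibers described after Proposition~\ref{Halphen}, so $\pi^{-1}(F)$ really is the disjoint union of two fibers of $\tilde p$ and the section argument goes through.
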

\begin{proof}
Since $F$ consists of $(-2)$-curves by Lemma \ref{D4diagram}, if a bi-section meets a non-simple component of $F$, then
it is a $(-2)$-curve by the proof of Lemma \ref{D4diagram}.  Then the assertion follows from Kond\=o \cite[Lemma 4.3]{Ko}.
\end{proof}

In Kond\=o \cite[\S 4]{Ko}, by using Lemmas \ref{MultipleFiber}, \ref{NonMultipleFiber}, the author determined the dual graphs of $(-2)$-curves on Enriques surfaces with finite automorphism group as follows.  First we take a special elliptic fibration $\pi:S\to {\bf P}^1$ (Lemma \ref{specialfibration}) and its special bi-section.   For each type of singular fibers of $\pi$,
we determine the dual graph or show the non-existence of such fibration inductively.  We consider the following order of types of singular fibers of a special elliptic fibration (the numbers are the ones given in \cite{Ko}):

\smallskip
\noindent
$(4.4)\ \tilde{E}_8; \ (4.5)\ \tilde{A}_8;\ (4.6)\ \tilde{D}_8;\ (4.7)\ \tilde{D}_4 + \tilde{D}_4;\ (4.8)\ \tilde{E}_7+\tilde{A}_1;\ (4.9)\ \tilde{E}_6+\tilde{A}_2;$
%
%\smallskip
%\noindent
$(4.10)\ \tilde{D}_6+\tilde{A}_1+\tilde{A}_1;\ (4.11)\ \tilde{D}_5+\tilde{A}_3;$

\smallskip
\noindent
In the following we may assume that all singular fibers are of type $\tilde{A}_n$.

\smallskip
\noindent
$(4.12)\ 2\tilde{A}_7+2\tilde{A}_1;\  2\tilde{A}_7+\tilde{A}_1;\ (4.13)\ 2\tilde{A}_4+ 2\tilde{A}_4;$ 
$(4.14)\ 2\tilde{A}_5 + 2\tilde{A}_2 +\tilde{A}_1;$
%
%\smallskip
%\noindent
$(4.15)\ 2\tilde{A}_5 + \tilde{A}_2 +2\tilde{A}_1;\  2\tilde{A}_3+2\tilde{A}_3 + \tilde{A}_1+\tilde{A}_1;\ (4.16)\ 2\tilde{A}_4+\tilde{A}_4;$ 
%
%\smallskip
%\noindent
$(4.17) \ 2\tilde{A}_5+\tilde{A}_2 + \tilde{A}_1;$
$(4.18)\ 
2\tilde{A}_2 + 2\tilde{A}_2+\tilde{A}_2+\tilde{A}_2;\ (4.19)\ \tilde{A}_7+2\tilde{A}_1;$
$(4.20)\ 2\tilde{A}_3+\tilde{A}_3 + 2\tilde{A}_1+\tilde{A}_1;$
%
%\smallskip
%\noindent
$(4.21)\ 2\tilde{A}_3+\tilde{A}_3 + \tilde{A}_1+\tilde{A}_1;$
$(4.22)\ 2\tilde{A}_2 + \tilde{A}_2+\tilde{A}_2+\tilde{A}_2;$
$(4.23)\ \tilde{A}_5 + 2\tilde{A}_2 +2\tilde{A}_1;$
%
%\smallskip
%\noindent
$(4.24)\ \tilde{A}_5 + 2\tilde{A}_2 +\tilde{A}_1;\ (4.25)\ \tilde{A}_5 + \tilde{A}_2 +2\tilde{A}_1;$
$(4.26)\ \tilde{A}_3+\tilde{A}_3 + 2\tilde{A}_1+ 2\tilde{A}_1;$
%
%\smallskip
%\noindent
$(4.27)\ \tilde{A}_3+\tilde{A}_3 + 2\tilde{A}_1+\tilde{A}_1;$\ 
$(4.28)$\ No multiple reducible fibers.

\smallskip
\noindent
In the steps (4.18), (4.23.2), (4.24.3), (4.24.4), (4.26), the author used some properties
of automorphisms of the covering $K3$
surface over the complex numbers to exclude the case or to reduce the case to a previous case.
On the other hand, as mentioned above, Martin \cite[Lemma 10.12]{Martin} proved that 
there are no special elliptic fibrations on an Enriques surface with singular fibers of type ${\rm I}_6$, $2{\rm I}_3$, 
$2{\rm I}_2$ (the case (4.23)) and the one with four singular fibers of type $2{\rm I}_3, 2{\rm I}_3, {\rm I}_3, {\rm I}_3$ (the case (4.18)).

Now in the following we proceed with the proof given in \cite{Ko} without using any property of the covering $K3$ surface, and show that the dual graph of type MI (resp. of type MII) appears in the case (4.18) (resp. (4.23)). We also employ the idea of Martin \cite{Martin} to use Proposition \ref{Jacobian2}.  For the other steps except (4.18), (4.23.2), (4.24.3), (4.24.4), (4.26), we refer the reader to the proof in Kondo \cite{Ko}.

\medskip
{\bf The case (4.18):} We assume that there exists a special genus one fibration with
singular fibers of type $2\tilde{A}_2+2\tilde{A}_2+\tilde{A}_2+\tilde{A}_2$ 
and a special bi-section $N$.
We will show that in this case we obtain the dual graph of type MII.
In this case we may assume that the bi-section $N$ meets the reducible fibers as in the following
Figure \ref{4.18} (in the other cases, there exists a multiple fiber of type $\tilde{E}_6$ which contradicts Lemma \ref{MultipleFiber}): 

\begin{figure}[htbp]
\begin{center}
\xy
(-30,0)*{};
@={(-10,10),(-20,10),(-20,20),(80,20),(70,10),(80,10),(20,20),(40,20),(10,10),(20,10),(40,10),(50,10),(30,0)}@@{*{\bullet}};
(-10,10)*{};(-20,10)*{}**\dir{-};
(-10,10)*{};(-20,20)*{}**\dir{-};
(-20,20)*{};(-20,10)*{}**\dir{-};
(10,10)*{};(20,10)*{}**\dir{-};
(20,20)*{};(20,10)*{}**\dir{-};
(10,10)*{};(20,20)*{}**\dir{-};
(50,10)*{};(40,10)*{}**\dir{-};
(40,20)*{};(40,10)*{}**\dir{-};
(50,10)*{};(40,20)*{}**\dir{-};
(70,10)*{};(80,10)*{}**\dir{-};
(80,20)*{};(80,10)*{}**\dir{-};
(70,10)*{};(80,20)*{}**\dir{-};
(30,0)*{};(20,10)*{}**\dir{-};
(80,10)*{};(30,0)*{}**\dir{=};
(40,10)*{};(30,0)*{}**\dir{=};
(-20,10)*{};(30,0)*{}**\dir{-};
(30,-3)*{N};(83,10)*{E};(36,10)*{E'};
\endxy
 \caption{Fiber of type $2\tilde{A}_2+2\tilde{A}_2+\tilde{A}_2+\tilde{A}_2$ and a special bi-section $N$}
 \label{4.18}
\end{center}
\end{figure}

Consider the special elliptic fibration defined by $|2(N+E)|$. 
Since $A_2+A_2+A_2$ are contained in fibers of this fibration, we may assume that 
it is of type $\tilde{A}_5+\tilde{A}_2+\tilde{A}_1$ (the case of type $\tilde{E}_7+\tilde{A}_1$ is excluded by induction because it is reduced to the previous case (4.8)).  
By considering a 2-section, the fiber of
type $\tilde{A}_5$ is not multiple.  In case that $\tilde{A}_2$ is multiple, 
a subdiagram of type $2\tilde{E}_6$ appears which contradicts Lemma \ref{MultipleFiber}.  
Thus it is of type $\tilde{A}_5+\tilde{A}_2+2\tilde{A}_1$.  We obtain
three new roots as components of this fibration.
The four bi-sections and one 4-section of this fibration meet the new roots as in the following Figure \ref{4.18-2}, because, otherwise, a multiple fiber of $2\tilde{E}_6$ appears.
In the Figure \ref{4.18-2}, we use the notation of vertices of the dual graph of Example \ref{TypeMII} because it is helpful to understand the dual graph.

\begin{figure}[htbp]
\begin{center}
\xy
(-10,0)*{};
@={(-10,10),(-20,10),(-20,20),(0,0),(20,30),(100,20),(90,10),(100,10),(20,18),(50,20),(10,10),(20,10),(40,10),(70,10),(50,30),(30,0)}@@{*{\bullet}};
(-10,10)*{};(-20,10)*{}**\dir{-};
(-10,10)*{};(-20,20)*{}**\dir{-};
(-20,20)*{};(-20,10)*{}**\dir{-};
(10,10)*{};(20,10)*{}**\dir{-};
(20,18)*{};(20,10)*{}**\dir{-};
(10,10)*{};(20,18)*{}**\dir{-};
(70,10)*{};(40,10)*{}**\dir{-};
(50,20)*{};(40,10)*{}**\dir{-};
(70,10)*{};(50,20)*{}**\dir{-};
(90,10)*{};(100,10)*{}**\dir{-};
(100,20)*{};(100,10)*{}**\dir{-};
(90,10)*{};(100,20)*{}**\dir{-};
(30,0)*{};(20,10)*{}**\dir{-};
(100,10)*{};(30,0)*{}**\dir{=};
(40,10)*{};(30,0)*{}**\dir{=};
(-10,10)*{};(0,0)*{}**\dir{-};
(0,0)*{};(10,10)*{}**\dir{-};
(-20,20)*{};(20,30)*{}**\dir{-};
(20,18)*{};(20,30)*{}**\dir{-};
(50,20)*{};(50,30)*{}**\dir{-};
(70,10)*{};(50,30)*{}**\dir{-};
(40,10)*{};(50,30)*{}**\dir{=};
(50,30)*{};(90,10)*{}**\dir{=};
(50,30)*{};(100,20)*{}**\dir{=};
(-20,10)*{};(30,0)*{}**\dir{-};
(50,30)*{};(20,10)*{}**\dir{=};
(50,30)*{};(-20,10)*{}**\dir{=};
(0,0)*{};(90,10)*{}**\dir{=};
(20,30)*{};(100,20)*{}**\dir{=};
(20,30)*{};(40,10)*{}**\dir{=};
(0,0)*{};(40,10)*{}**\dir{=};
(-6,10)*{v_{31}};(6,10)*{v_{42}};(-24,10)*{v_{11}};(25,10)*{v_{44}};(-24,20)*{v_{21}};(24,18)*{v_{43}};
(30,-3)*{v_{14}};(0,-3)*{v_{32}};(20,33)*{v_{23}};(50,33)*{v_{id}};(106,10)*{v_{(14)}};(107,20)*{v_{(1234)}};(92,5)*{v_{(1324)}};(47,8)*{v_{(14)(23)}};(75,12)*{v_{(124)}};(52,15)*{v_{(134)}};
\endxy
 \caption{}
 \label{4.18-2}
\end{center}
\end{figure}

\noindent
In Figure \ref{4.18-2} the original fibration is given by 
$$|2(v_{11}+v_{21}+v_{31})|=|2(v_{42}+v_{43}+v_{44})|=
|v_{(14)(23)}+v_{(124)}+v_{(134)}|=|v_{(14)}+v_{(1324)}+v_{(1234)}|$$ 
and $N=v_{14}$, and 
the fibration $|2(N+E)|$ is given by
$$|2(v_{14}+v_{(14)})|=|\underline{v_{id}}+v_{(124)}+v_{(134)}|=|v_{21}+\underline{v_{23}}+v_{43}+v_{42}+\underline{v_{32}}+v_{31}|,$$
where the vertices with underline are new roots.

Next we consider the special fibration defined by $|2(v_{14}+v_{(14)(23)})|$.  Then we similarly 
obtain three new roots
$$v_{(23)}, v_{22}, v_{33}$$
as components of fibers of
$$|2(v_{14}+v_{(14)(23)})|=|\underline{v_{(23)}}+v_{(1324)}+v_{(1234)}|=|v_{21}+\underline{v_{22}}+v_{42}+v_{43}+\underline{v_{33}}+v_{31}|.$$  The incidence relation between the 
new roots and the old ones follows from Lemma \ref{MultipleFiber} and the Hodge index theorem. 
Similarly we have new roots by considering the following fibrations:
$$|2(v_{32}+v_{(14)(23)})|=|v_{(14)}+v_{(1234)}+\underline{v_{(1423)}}|=|v_{11}+\underline{v_{13}}+v_{43}+v_{44}+\underline{v_{24}}+v_{21}|,$$
$$|2(v_{23}+v_{(14)(23)})|=|v_{(14)}+v_{(1324)}+\underline{v_{(1432)}}|=|v_{11}+\underline{v_{12}}+v_{42}+v_{44}+\underline{v_{34}}+v_{31}|,$$
$$|2(v_{14}+v_{(1423)})|=|v_{12}+v_{13}+v_{43}+\underline{v_{41}}+v_{21}+v_{22}|.$$
We have now obtained all vertices of $\Gamma_0$.

We continue this process.  
The following are fibrations used and the obtained roots which are marked by underlines.
We remark that in each step we obtain exactly one new root.  Since the fiber containing the new root is non-multiple and hence the intersection numbers of the obtained root with the known roots are determined uniquely.

\smallskip
(1) \ $|2(v_{13}+v_{(134)})|=|v_{(14)}+v_{(1234)}+ \underline{v_{(34)}}|$;

\smallskip
(2) \ $|2(v_{13}+v_{23}+v_{43})|=|2(v_{31}+v_{32}+v_{34})|=|v_{id}+v_{(124)}+\underline{v_{(142)}}|$; 

\smallskip
(3) \ $|2(v_{24}+v_{34}+v_{44})|=|2(v_{11}+v_{12}+v_{13})|=|v_{(142)}+v_{(14)(23)}+\underline{v_{(143)}}|$; 

\smallskip
(4) \ $|2(v_{34}+v_{(34)})|=|v_{(143)}+v_{id}+\underline{v_{(243)}}|$; 

\smallskip
(5) \ $|2(v_{31}+v_{32}+v_{33})|=|2(v_{14}+v_{24}+v_{44})|=|v_{(34)}+v_{(1234)}+\underline{v_{(1342)}}|$;

\smallskip
(6) \ $|2(v_{21}+v_{31}+v_{41})|=|2(v_{12}+v_{13}+v_{14})|=|v_{(23)}+v_{(34)}+\underline{v_{(24)}}|$; 

\smallskip
(7) \ $|2(v_{12}+v_{32}+v_{42})|=|2(v_{21}+v_{23}+v_{24})|=|v_{(14)}+v_{(34)}+\underline{v_{(13)}}|$;

\smallskip
(8) \ $|2(v_{14}+v_{24}+v_{34})|=|2(v_{41}+v_{42}+v_{43})|=|v_{(23)}+v_{(13)}+\underline{v_{(12)}}|$; 

\smallskip
(9) \ $|2(v_{13}+v_{23}+v_{33})|=|2(v_{41}+v_{42}+v_{44})|=|v_{(34)}+v_{(1432)}+\underline{v_{(1243)}}|=|v_{(143)}+v_{(243)}+ \underline{v_{(12)(34)}}|$;

\smallskip
(10) \ $|2(v_{14}+v_{24}+v_{44})|=|2(v_{31}+v_{32}+v_{33})|=|v_{(134)}+v_{(12)(34)}+\underline{v_{(234)}}|$; 

\smallskip
(11) \ $|2(v_{12}+v_{22}+v_{42})|=|2(v_{31}+v_{33}+v_{34})|=|v_{(14)(23)}+v_{(243)}+\underline{v_{(132)}}|$; 

\smallskip
(12) \ $|2(v_{13}+v_{33}+v_{43})|=|2(v_{21}+v_{22}+v_{24})|=|v_{(14)(23)}+v_{(234)}+\underline{v_{(123)}}|$; 

\smallskip
(13) \ $|2(v_{14}+v_{34}+v_{44})|=|2(v_{21}+v_{22}+v_{23})|=|v_{(243)}+v_{(124)}+\underline{v_{(13)(24)}}|$; 

\smallskip
\noindent
Thus we have obtained 40 roots of $\Gamma_{\rm MII}$.

\medskip
{\bf The case (4.23.2):}\ We assume that there exists a special elliptic fibration with singular
fibers of type $\tilde{A}_5+2\tilde{A}_2+2\tilde{A}_1$ and a special bi-section $N$ as in the following Figure \ref{4.23}:

\begin{figure}[htbp]
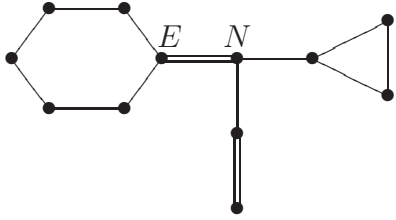

\begin{center}
\xy
(-30,0)*{};
@={(0,0),(5,-6.65),(5,6.65),(15,6.65),(15,-6.65),(20,0),(30,0),(30,-10),(30,-20),(40,0),(50,-5),(50,5)}@@{*{\bullet}};
(0,0)*{};(5,-6.65)*{}**\dir{-};
(0,0)*{};(5,6.65)*{}**\dir{-};
(5,-6.65)*{};(15,-6.65)*{}**\dir{-};
(5,6.65)*{};(15,6.65)*{}**\dir{-};
(15,6.65)*{};(20,0)*{}**\dir{-};
(15,-6.65)*{};(20,0)*{}**\dir{-};
(20,0)*{};(30,0)*{}**\dir{=};
(30,0)*{};(30,-10)*{}**\dir{-};
(30,-10)*{};(30,-20)*{}**\dir{=};
(30,0)*{};(40,0)*{}**\dir{-};
(40,0)*{};(50,5)*{}**\dir{-};
(40,0)*{};(50,-5)*{}**\dir{-};
(50,-5)*{};(50,5)*{}**\dir{-};
(30,3)*{N};(21,3)*{E};
\endxy
 \caption{Fiber of type $\tilde{A}_5+2\tilde{A}_2+2\tilde{A}_1$ and a special bi-section $N$}
 \label{4.23}
\end{center}
\end{figure}

We will show that in this case we obtain the dual graph of type MI.
Consider the special elliptic fibration defined by $|2(N+E)|$. 
Since $A_3+A_2+A_1$ are contained in the fibers of this fibration, we may assume that it is of type 
$\tilde{A}_3+\tilde{A}_3+\tilde{A}_1+\tilde{A}_1$ or $\tilde{A}_5+\tilde{A}_2+\tilde{A}_1$.
In case $\tilde{A}_5+\tilde{A}_2+\tilde{A}_1$, we can reduce this case to the previous case
(see Kond\=o \cite[4.23.2]{Ko}).  Thus we may assume that it is of type $\tilde{A}_3+\tilde{A}_3+ 2\tilde{A}_1+\tilde{A}_1$ (in case that a fiber of type $\tilde{A}_3$ is multiple we reduce this case to the previous case (4.20)), and obtain four new roots as components of the fibers.
The four bi-sections meet the new roots as in the following Figure \ref{4.23-2} by Lemma \ref{MultipleFiber}.  In the Figure \ref{4.23-2}, we use the notation of vertices of the dual graph of Example \ref{TypeMI} which is helpful to understand the dual graph:

\begin{figure}[htbp]
\begin{center}
\xy
(-20,0)*{};
@={(-15,0),(2.5,10),(-6,-16),(-6,16),(10,16),(10,-16),(22,0),(40,0),(40,-20),(40,-30),(60,0),(70,-5),(70,5),(80,10),(80,-10),(40,-40)}@@{*{\bullet}};
(-15,0)*{};(-6,-16)*{}**\dir{-};
(-15,0)*{};(-6,16)*{}**\dir{-};
(-6,-16)*{};(10,-16)*{}**\dir{-};
(-6,16)*{};(10,16)*{}**\dir{-};
(10,16)*{};(22,0)*{}**\dir{-};
(10,-16)*{};(22,0)*{}**\dir{-};
(22,0)*{};(40,0)*{}**\dir{=};
(40,0)*{};(40,-20)*{}**\dir{-};
(40,-20)*{};(40,-30)*{}**\dir{=};
(40,0)*{};(60,0)*{}**\dir{-};
(60,0)*{};(70,5)*{}**\dir{-};
(60,0)*{};(70,-5)*{}**\dir{-};
(70,-5)*{};(70,5)*{}**\dir{-};
(80,10)*{};(80,-10)*{}**\dir{-};
(70,5)*{};(80,10)*{}**\dir{-};
(70,-5)*{};(80,-10)*{}**\dir{-};
(40,-30)*{};(40,-40)*{}**\dir{=};
(-6,16)*{};(2.5,10)*{}**\dir{-};
(-6,-16)*{};(2.5,10)*{}**\dir{-};
(10,16)*{};(2.5,10)*{}**\dir{-};
(10,-16)*{};(2.5,10)*{}**\dir{-};
(40,-40)*{};(60,0)*{}**\dir{=};
(10,16)*{};(80,10)*{}**\dir{=};
(10,-16)*{};(80,-10)*{}**\dir{=};
(10,-16)*{};(40,-40)*{}**\dir{=};
(10,16)*{};(40,-40)*{}**\dir{=};
(2.5,10)*{};(60,0)*{}**\dir{=};
(40,-20)*{};(2.5,10)*{}**\dir{=};
(40,-20)*{};(80,10)*{}**\dir{-};
(40,-20)*{};(80,-10)*{}**\dir{-};
(-19,0)*{v_{45}};(-2,10)*{v_{36}};(25,-3)*{v_{12}};(10,19)*{v_{16}};(10,-19)*{v_{23}};(-6,-19)*{v_{34}};
(-6,19)*{v_{56}};(46,-3)*{v_{12,35}};(59,3)*{v_{15,24}};(65,-8)*{v_{14,26}};(65,8)*{v_{13,25}};(86,10)*{v_{16,24}};(86,-10)*{v_{15,23}};(34,-20)*{v_{14,25}};(36,-26)*{v_{135}};(40,-43)*{v_{146}};
\endxy
 \caption{}
 \label{4.23-2}
\end{center}
\end{figure}

\noindent
In Figure \ref{4.23-2} the original fibration is given by 
$$|v_{12}+v_{23}+v_{34}+v_{45}+v_{56}+v_{16}|=|2(v_{14,25}+v_{135})|=
|2(v_{13,25}+v_{14,26}+v_{15,24})|$$ 
and $N=v_{(12,35)}$, and 
the fibration $|2(N+E)|$ is given by
$$|2(v_{12}+v_{12,35})|=|v_{34}+v_{45}+v_{56}+\underline{v_{36}}|=|v_{13,25}+v_{14,26}+\underline{v_{15,23}}+\underline{v_{16,24}}| = 
|v_{135}+\underline{v_{146}}|,$$
where the vertices with underline are new.

Next we consider the special genus one fibration defined by $|2(v_{16}+v_{16,24})|$.  
Then we obtain three new roots
$$v_{25}, v_{13,24}, v_{124}$$
as components of fibers of 
$$|2(v_{16}+v_{16,24})|=|v_{23}+v_{34}+v_{45}+\underline{v_{25}}|=
|v_{12,35}+v_{15,24}+v_{14,26}+\underline{v_{13,24}}|=|v_{135}+\underline{v_{124}}|.$$

We continue this process.  The following are fibrations used and the obtained new roots which are marked by underline.  We remark that in each step except the case (13) we obtain exactly one new root in a fiber and hence the intersection numbers of the obtained root with the known roots are determined uniquely.

\smallskip
(1) \ $|2(v_{23}+v_{15,23})|=|v_{16}+v_{56}+v_{45}+\underline{v_{14}}|=|v_{12,35}+v_{15,24}+v_{13,25}+ \underline{v_{15,26}}|=|v_{135}+\underline{v_{125}}|$; 

\smallskip
(2) \ $|2(v_{34}+v_{15,26})|=|v_{12}+v_{25}+v_{56}+v_{16}|=|v_{15,24}+v_{14,26}+v_{15,23}+\underline{v_{13,26}}|=|v_{135}+\underline{v_{145}}|$;

\smallskip
(3) \ $|2(v_{23}+v_{146})|=|v_{12,35}+v_{14,25}+v_{16,24}+v_{13,25}+v_{14,26}+ \underline{v_{16,25}}|$; 

\smallskip
(4) \ $|2(v_{45}+v_{13,26})|=|v_{12}+v_{16}+v_{36}+v_{23}|=|v_{13,24}+v_{14,26}+v_{13,25}+v_{15,26}|=
|v_{135}+\underline{v_{134}}|$;

\smallskip
(5) \ $|2(v_{56}+v_{13,24})|=|v_{12}+v_{14}+v_{34}+v_{23}|=|v_{13,25}+v_{15,24}+v_{13,26}+v_{16,24}|=
|v_{135}+\underline{v_{136}}|$;

\smallskip
(6) \ $|2(v_{16}+v_{16,25})|=|v_{125}+v_{134}|=|v_{14,25}+v_{15,26}+v_{13,25}+\underline{v_{12,34}}|$;

\smallskip
(7) \ $|2(v_{25}+v_{16,25})|=|v_{12,34}+v_{16,24}+v_{15,26}+\underline{v_{16,23}}|$;

\smallskip
(8) \ $|2(v_{12}+v_{12,34})|=|v_{15,26}+v_{13,24}+v_{16,25}+\underline{v_{14,23}}|$;

\smallskip
(9) \ $|2(v_{25}+v_{14,25})|=|v_{14,23}+v_{15,24}+v_{14,26}+\underline{v_{12,36}}|$;

\smallskip
\noindent
Note that we have now obtained all fifteen roots indexed by synthemes.

\smallskip
(10) \ $|2(v_{14}+v_{14,26})|=|v_{23}+v_{36}+v_{56}+v_{25}|=|v_{12,35}+v_{13,26}+v_{16,24}+v_{15,26}|=
|v_{135}+\underline{v_{126}}|$;

\smallskip
(11) \ $|2(v_{25}+v_{13,25})|=|v_{14}+v_{16}+v_{36}+v_{34}|=|v_{12,35}+v_{13,24}+v_{13,26}+v_{15,23}|=
|v_{135}+\underline{v_{123}}|$;

\smallskip
(12) \ $|2(v_{36}+v_{15,24})|=|v_{12}+v_{14}+v_{45}+v_{25}|=|v_{13,24}+v_{15,23}+v_{16,24}+v_{15,26}|=
|v_{135}+\underline{v_{156}}|$;

\smallskip
\noindent
Note that we have now obtained all ten roots indexed by triads. 

\smallskip
(13) \ $|2(v_{12}+v_{12,34})|=|v_{134}+v_{156}|=|v_{13,24}+v_{15,26}+v_{14,23}+v_{16,25}|=
|v_{36}+v_{45}+\underline{v_{35}}+\underline{v_{46}}|$;

\smallskip
\noindent
This case is the same as in the first case $|2(N+E)|$.   
We may assume that the graph is the desired one.

\smallskip
(14) \ $|2(v_{16}+v_{16,25})|=|v_{125}+v_{134}|=|v_{12,34}+v_{13,25}+v_{15,34}+v_{14,25}|=|v_{23}+v_{35}+v_{45}+\underline{v_{24}}|$;

\smallskip 
(15) \ $|2(v_{25}+v_{14,25})|=|v_{124}+v_{145}|=|v_{12,36}+v_{14,26}+v_{15,24}+v_{14,23}|=|v_{16}+v_{46}+v_{34}+\underline{v_{13}}|$;

\smallskip 
(16) \ $|2(v_{46}+v_{13,25})|=|v_{134}+v_{136}|=|v_{13,24}+v_{14,25}+v_{13,26}+v_{16,25}|=|v_{12}+v_{23}+v_{35}+\underline{v_{15}}|$;

\smallskip
(17) \ $|2(v_{45}+v_{16,23})|=|v_{146}+v_{156}|=|v_{14,23}+v_{16,24}+v_{15,23}+v_{16,25}|=
|v_{12}+v_{13}+v_{36}+\underline{v_{26}}|$;

\smallskip
\noindent
Thus we have obtained 40 roots of $\Gamma_{\rm MI}$.

\bigskip
{\bf The case (4.24.3):}\ We start with the following Figure \ref{4.24}:

\begin{figure}[htbp]
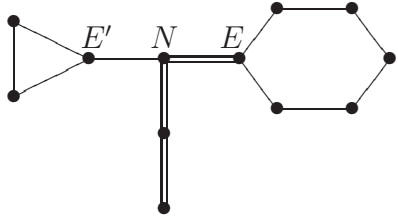

\begin{center}
\xy
(-30,0)*{};
@={(60,0),(55,-6.65),(55,6.65),(45,6.65),(45,-6.65),(20,0),(30,0),(30,-10),(30,-20),(40,0),(10,-5),(10,5)}@@{*{\bullet}};
(60,0)*{};(55,-6.65)*{}**\dir{-};
(60,0)*{};(55,6.65)*{}**\dir{-};
(55,-6.65)*{};(45,-6.65)*{}**\dir{-};
(55,6.65)*{};(45,6.65)*{}**\dir{-};
(45,6.65)*{};(40,0)*{}**\dir{-};
(45,-6.65)*{};(40,0)*{}**\dir{-};
(20,0)*{};(30,0)*{}**\dir{-};
(30,0)*{};(30,-10)*{}**\dir{=};
(30,-10)*{};(30,-20)*{}**\dir{=};
(30,0)*{};(40,0)*{}**\dir{=};
(20,0)*{};(10,5)*{}**\dir{-};
(20,0)*{};(10,-5)*{}**\dir{-};
(10,-5)*{};(10,5)*{}**\dir{-};
(30,3)*{N};(21,3)*{E'};(39,3)*{E};
\endxy
 \caption{Fiber of type $\tilde{A}_5+2\tilde{A}_2+\tilde{A}_1$ and a special bi-section $N$}
 \label{4.24}
\end{center}
\end{figure}

Consider the fibration defined by $|2(E+N)|$ whose fibers contain $A_3+A_2+A_1$.  
Therefore the fibration has singular fibers of type $\tilde{A}_5+\tilde{A}_2 + \tilde{A}_1$
or $\tilde{A}_3+\tilde{A}_3 + \tilde{A}_1 +\tilde{A}_1$.  The first case reduces to a  previous case by Kond\=o \cite[(4.24.3), (a)]{Ko}.  
Thus we may assume that it has singular fibers of type 
$\tilde{A}_3+\tilde{A}_3 + 2\tilde{A}_1 +\tilde{A}_1$ as in the following Figure \ref{4.24-2}:

\begin{figure}[htbp]
\begin{center}
\xy
(-30,0)*{};
@={(00,0),(10,0),(25,10),(40,0),(50,0),(0,10),(10,10),(20,0),(20,-10),(30,-10),(30,0),(40,10),(50,10)}@@{*{\bullet}};
(0,0)*{};(10,0)*{}**\dir{-};
(0,0)*{};(0,10)*{}**\dir{-};
(10,0)*{};(10,10)*{}**\dir{-};
(0,10)*{};(10,10)*{}**\dir{-};
(10,10)*{};(25,10)*{}**\dir{-};
(25,10)*{};(40,10)*{}**\dir{=};
(25,10)*{};(20,0)*{}**\dir{-};
(25,10)*{};(30,0)*{}**\dir{=};
(25,10)*{};(10,0)*{}**\dir{-};
(40,0)*{};(40,10)*{}**\dir{-};
(40,0)*{};(50,0)*{}**\dir{-};
(50,0)*{};(50,10)*{}**\dir{-};
(40,10)*{};(50,10)*{}**\dir{-};
(20,0)*{};(20,-10)*{}**\dir{=};
(30,0)*{};(30,-10)*{}**\dir{=};
(17,0)*{N};(17,-10)*{E};(25,13)*{E'};
\endxy 
 \caption{}
 \label{4.24-2}
\end{center}
\end{figure}

If the bi-section $E'$ is not a $(-2)$-curve, then $N$ and the components of the fiber 
in left hand side are not $(-2)$-curves because the intersection number of 
a $(-2)$-root and a $(-1)$-root is even, and the curves are as in the following 
Figure \ref{4.24-3} in which the dotted lines are $(-1)$-curves, $N_1,\ldots, N_6$ are 
$(-4)$-curves and the remaining ones are $(-2)$-curves.

\begin{figure}[htbp]
\begin{center}
\xy
(-30,0)*{};
(-10,0)*{};(65,0)*{}**\dir{--};
(-5,-5)*{};(-5,25)*{}**\dir{-};
(0,20)*{};(-25,20)*{}**\dir{--};
(0,5)*{};(-25,5)*{}**\dir{--};
(-20,15)*{};(-20,30)*{}**\dir{-};
(-20,10)*{};(-20,-5)*{}**\dir{-};
(-15,0)*{};(-35,0)*{}**\dir{--};
(-15,25)*{};(-35,25)*{}**\dir{--};
(-30,30)*{};(-30,-5)*{}**\dir{-};
(10,-5)*{};(10,15)*{}**\dir{-};
(5,10)*{};(25,10)*{}**\dir{--};
(5,10)*{};(25,10)*{}**\dir{--};
(15,5)*{};(15,20)*{}**\dir{-};
(20,5)*{};(20,20)*{}**\dir{-};
(35,-5)*{};(35,20)*{}**\dir{-};
(50,-5)*{};(50,20)*{}**\dir{-};
(60,5)*{};(60,20)*{}**\dir{-};
(40,15)*{};(30,15)*{}**\dir{-};
(30,15)*{};(30,5)*{}**\dir{-};
(40,5)*{};(30,5)*{}**\dir{-};
(45,10)*{};(65,10)*{}**\dir{-};
(45,15)*{};(65,15)*{}**\dir{-};
(68,0)*{C};(14,23)*{E};
(-5,-8)*{N_3};(-20,-8)*{N_2};
(-30,-8)*{N_1};(-17,15)*{N_4};
(10,-8)*{N_5};(21,23)*{N_6};(3,12)*{C'};
\endxy 
 \caption{}
 \label{4.24-3}
\end{center}
\end{figure}

\noindent
Here $N=2C'+{1\over 2}N_5+{1\over 2}N_6$ and $E'= 2C+{1\over 2}N_3+{1\over 2}N_5$.
Contracting the 5 $(-1)$-curves except $C$, and then contracting the image of $E$, we get a Jacobian elliptic fibration with singular fibers
of type ${\rm I}_4, {\rm I}_4, {\rm I}_2, {\rm III}$.  
However there are no such fibrations by
Table \ref{extremal}.  Thus this case does not occur.

In case that the bi-section $E'$ is a $(-2)$-curve, then by using Proposition \ref{Jacobian2} and by applying the same method by Martin \cite[p.640, the case (c)]{Martin} we
can find an effective irreducible root $F$ as in the following Figure \ref{4.24-4}:

\begin{figure}[htbp]
\begin{center}
\xy
(-30,0)*{};
@={(00,0),(10,0),(25,10),(40,0),(50,0),(0,10),(10,10),(20,0),(20,-10),(30,-10),(30,0),(40,10),(50,10),
(10,20)}@@{*{\bullet}};
(0,0)*{};(10,0)*{}**\dir{-};
(0,0)*{};(0,10)*{}**\dir{-};
(10,0)*{};(10,10)*{}**\dir{-};
(0,10)*{};(10,10)*{}**\dir{-};
(10,10)*{};(25,10)*{}**\dir{-};
(25,10)*{};(40,10)*{}**\dir{=};
(25,10)*{};(20,0)*{}**\dir{-};
(25,10)*{};(30,0)*{}**\dir{=};
(25,10)*{};(10,0)*{}**\dir{-};
(40,0)*{};(40,10)*{}**\dir{-};
(40,0)*{};(50,0)*{}**\dir{-};
(50,0)*{};(50,10)*{}**\dir{-};
(40,10)*{};(50,10)*{}**\dir{-};
(20,0)*{};(20,-10)*{}**\dir{=};
(30,0)*{};(30,-10)*{}**\dir{=};
(10,20)*{};(20,0)*{}**\dir{-};
(10,20)*{};(30,-10)*{}**\dir{=};
(10,20)*{};(0,0)*{}**\dir{-};
(10,20)*{};(0,10)*{}**\dir{-};
(10,20)*{};(40,10)*{}**\dir{=};
(7,20)*{F};
\endxy 
\caption{}
\label{4.24-4}
\end{center}
\end{figure}

\noindent
Then, as pointed out by Martin, we can easily find that there exists a fibration with two multiple singular fibers of type $2\tilde{A}_2$ and $2\tilde{A}_1$, and hence reduce this case to the previous case.

\medskip
{\bf The case (4.24.4):}\ We start with the following Figure \ref{4.24-5}:

\begin{figure}[htbp]
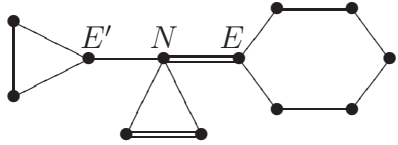

\begin{center}
\xy
(-20,0)*{};
@={(60,0),(55,-6.65),(55,6.65),(45,6.65),(45,-6.65),(20,0),(30,0),(25,-10),(35,-10),(40,0),(10,-5),(10,5)}@@{*{\bullet}};
(60,0)*{};(55,-6.65)*{}**\dir{-};
(60,0)*{};(55,6.65)*{}**\dir{-};
(55,-6.65)*{};(45,-6.65)*{}**\dir{-};
(55,6.65)*{};(45,6.65)*{}**\dir{-};
(45,6.65)*{};(40,0)*{}**\dir{-};
(45,-6.65)*{};(40,0)*{}**\dir{-};
(20,0)*{};(30,0)*{}**\dir{-};
(35,-10)*{};(25,-10)*{}**\dir{=};
(30,0)*{};(35,-10)*{}**\dir{-};
(30,0)*{};(25,-10)*{}**\dir{-};
(30,0)*{};(40,0)*{}**\dir{=};
(20,0)*{};(10,5)*{}**\dir{-};
(20,0)*{};(10,-5)*{}**\dir{-};
(10,-5)*{};(10,5)*{}**\dir{-};
(30,3)*{N};(21,3)*{E'};(39,3)*{E};
\endxy
 \caption{Fiber of type $\tilde{A}_5+2\tilde{A}_2+\tilde{A}_1$ and a special bi-section $N$}
 \label{4.24-5}
\end{center}
\end{figure}

By considering the fibration defined by $|2(E+N)|$, we have an elliptic fibration with
singular fibers of type $\tilde{A}_3+\tilde{A}_3+2\tilde{A}_1+\tilde{A}_1$.
We may assume that a bi-section $E'$ meets the components of fibers as in the following Figure \ref{4.24-1} because otherwise we obtain the Figure \ref{4.24-2} and reduce this case to the previous case (4.24), (iii):

\begin{figure}[htbp]
\begin{center}
\xy
(-30,0)*{};
@={(00,0),(10,0),(25,10),(40,0),(50,0),(0,10),(10,10),(20,0),(20,-10),(30,-10),(30,0),(40,10),(50,10)}@@{*{\bullet}};
(0,0)*{};(10,0)*{}**\dir{-};
(0,0)*{};(0,10)*{}**\dir{-};
(10,0)*{};(10,10)*{}**\dir{-};
(0,10)*{};(10,10)*{}**\dir{-};
(10,10)*{};(25,10)*{}**\dir{-};
(25,10)*{};(40,10)*{}**\dir{=};
(25,10)*{};(20,0)*{}**\dir{-};
(25,10)*{};(30,0)*{}**\dir{-};
(25,10)*{};(10,0)*{}**\dir{-};
(25,10)*{};(30,-10)*{}**\dir{-};
(40,0)*{};(40,10)*{}**\dir{-};
(40,0)*{};(50,0)*{}**\dir{-};
(50,0)*{};(50,10)*{}**\dir{-};
(40,10)*{};(50,10)*{}**\dir{-};
(20,0)*{};(20,-10)*{}**\dir{=};
(30,0)*{};(30,-10)*{}**\dir{=};
(25,13)*{E'};(17,-10)*{E};(17,0)*{N};
\endxy 
\caption{}
\label{4.24-1}
\end{center}
\end{figure}

In this case $E'$ is a $(-2)$-curve by Lemma \ref{D4diagram}.  By using the method of Martin \cite{Martin} we can see that
this case does not occur (see Martin \cite[Table 5]{Martin}).

\medskip
{\bf The case (4.26):} \ We start with the following Figure \ref{4.26}:

\begin{figure}[htbp]
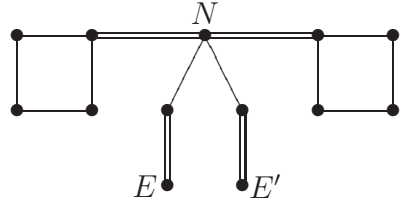

\begin{center}
\xy
(-30,0)*{};
@={(00,0),(10,0),(25,10),(40,0),(50,0),(0,10),(10,10),(20,0),(20,-10),(30,-10),(30,0),(40,10),(50,10)}@@{*{\bullet}};
(0,0)*{};(10,0)*{}**\dir{-};
(0,0)*{};(0,10)*{}**\dir{-};
(10,0)*{};(10,10)*{}**\dir{-};
(0,10)*{};(10,10)*{}**\dir{-};
(10,10)*{};(25,10)*{}**\dir{=};
(25,10)*{};(40,10)*{}**\dir{=};
(25,10)*{};(20,0)*{}**\dir{-};
(25,10)*{};(30,0)*{}**\dir{-};
(40,0)*{};(40,10)*{}**\dir{-};
(40,0)*{};(50,0)*{}**\dir{-};
(50,0)*{};(50,10)*{}**\dir{-};
(40,10)*{};(50,10)*{}**\dir{-};
(20,0)*{};(20,-10)*{}**\dir{=};
(30,0)*{};(30,-10)*{}**\dir{=};
(25,13)*{N};
(17,-10)*{E};(33,-10)*{E'};
\endxy 
\caption{Fiber of type $\tilde{A}_3+\tilde{A}_3+2\tilde{A}_1+2\tilde{A}_1$ and a special bi-section $N$}
\label{4.26}
\end{center}
\end{figure}

If the root $N$ is not a $(-2)$-curve, 
then by considering the intersection number of a $(-2)$-root and a $(-1)$-root, 
$E, E'$ are $(-2)$-curves and the configuration of curves is as in
the following Figure \ref{4.26-2}:

\begin{figure}[htbp]
\begin{center}
\xy
(-30,0)*{};
(-20,0)*{};(65,0)*{}**\dir{--};
(-10,-5)*{};(-10,20)*{}**\dir{-};
(-20,5)*{};(-20,20)*{}**\dir{-};
(-5,10)*{};(-25,10)*{}**\dir{-};
(-5,15)*{};(-25,15)*{}**\dir{-};
(5,-5)*{};(5,15)*{}**\dir{-};
(0,10)*{};(20,10)*{}**\dir{--};
(10,5)*{};(10,20)*{}**\dir{-};
(15,5)*{};(15,20)*{}**\dir{-};
(30,-5)*{};(30,15)*{}**\dir{-};
(25,10)*{};(45,10)*{}**\dir{--};
(35,5)*{};(35,20)*{}**\dir{-};
(40,5)*{};(40,20)*{}**\dir{-};
(55,-5)*{};(55,20)*{}**\dir{-};
(65,5)*{};(65,20)*{}**\dir{-};
(50,10)*{};(70,10)*{}**\dir{-};
(50,15)*{};(70,15)*{}**\dir{-};
(68,0)*{N'};(10,23)*{E};
(35,23)*{E'};
(5,18)*{N_1};(16,23)*{N_2};
(30,18)*{N_3};(41,23)*{N_4};
\endxy 
\caption{}
\label{4.26-2}
\end{center}
\end{figure}

\noindent
Here the dotted lines are $(-1)$-curves, $N_1,\ldots, N_4$ are $(-4)$-curves and the remaining ones are $(-2)$-curves.  Note that $N=2N' +{1\over 2}N_1 +{1\over 2}N_3$.

By contracting all $(-1)$-curves except $N'$ and then contracting the images of $E, E'$, we have a Jacobian fibration with singular fibers of type ${\rm I}_4, {\rm I}_4, {\rm III}, {\rm III}$.  By Table \ref{extremal}, there are no such
fibrations and hence this case does not occur.

Finally in case that $N$ is a $(-2)$-curve, by the same method of Martin \cite[p.639, the case (b)]{Martin}, we can see
that there exit three effective roots as in the following Figure \ref{4.26-3}:

\begin{figure}[htbp]
\begin{center}
\xy
(-30,0)*{};
@={(-5,0),(5,0),(25,10),(25,20),(45,0),(55,0),(-5,10),(5,10),(20,0),(20,-10),(30,-10),(30,0),(45,10),(55,10),(10,-15),(40,-15)}@@{*{\bullet}};
(-5,0)*{};(5,0)*{}**\dir{-};
(-5,0)*{};(-5,10)*{}**\dir{-};
(5,0)*{};(5,10)*{}**\dir{-};
(-5,10)*{};(5,10)*{}**\dir{-};
(5,10)*{};(25,10)*{}**\dir{=};
(25,10)*{};(45,10)*{}**\dir{=};
(25,10)*{};(20,0)*{}**\dir{-};
(25,10)*{};(30,0)*{}**\dir{-};
(45,0)*{};(45,10)*{}**\dir{-};
(45,0)*{};(55,0)*{}**\dir{-};
(55,0)*{};(55,10)*{}**\dir{-};
(45,10)*{};(55,10)*{}**\dir{-};
(20,0)*{};(20,-10)*{}**\dir{=};
(30,0)*{};(30,-10)*{}**\dir{=};
(25,20)*{};(25,10)*{}**\dir{=};
(25,20)*{};(20,0)*{}**\dir{-};
(25,20)*{};(30,0)*{}**\dir{-};
(25,20)*{};(-5,0)*{}**\dir{=};
(25,20)*{};(55,0)*{}**\dir{=};
(10,-15)*{};(40,-15)*{}**\dir{=};
(10,-15)*{};(20,0)*{}**\dir{-};
(10,-15)*{};(30,0)*{}**\dir{-};
(40,-15)*{};(20,0)*{}**\dir{-};
(40,-15)*{};(30,0)*{}**\dir{-};
(10,-15)*{};(5,10)*{}**\dir{=};
(10,-15)*{};(10,-20.5)*{}**\dir{=};
(9.5,-20)*{};(55.5,-20)*{}**\dir{=};
(55,-20.5)*{};(55,0)*{}**\dir{=};
(40,-15)*{};(40,-25.5)*{}**\dir{=};
(40.5,-25)*{};(-5.5,-25)*{}**\dir{=};
(-5,0)*{};(-5,-25.5)*{}**\dir{=};
(40,-15)*{};(45,10)*{}**\dir{=};
(40,-15)*{};(30,0)*{}**\dir{-};
(30,12)*{N};(29,21)*{F};(44,-15)*{F'};(6,-15)*{F''};
\endxy 
\caption{}
\label{4.26-3}
\end{center}
\end{figure}

This is the same situation as in Martin \cite{Martin}, that is, 
the fibration defined by $|F+N|=|F'+F''|$ contains 
eight disjoint roots as its fibers.  There are no fibrations satisfying this property.
Therefore this case does not occur.

Thus we have now finished the proof of Theorem \ref{dualgraph}.

\section{The possible characteristics and the number of boundary components}\label{sec6}

Let $S$ be a Coble surface in characteristic $p\ne 2$ with the dual graph I, II, \ldots, VII, MI or
MII.  We will determine the characteristic $p$ and the number of boundary components. 

First we start the following Lemma.

\begin{lemma}\label{CM-prop}
For a Coble surface $S$ with the dual graph of type {\rm I, II, \ldots, VII}, 
the Coble-Mukai lattice ${\rm CM}(S)$ is isomorphic to $E_{10}$.  For a Coble surface $S$ with the dual graph of type 
{\rm MI, MII}, the determinant of the Coble-Mukai lattice ${\rm CM}(S)$ is equal to $-2^l$ for some non-negative integer $l$.
\end{lemma}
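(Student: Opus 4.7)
The plan is to dispatch the cases according to the number $n$ of boundary components.

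\textbf{Case $n\leq 2$.} Looking at Table \ref{main}, this covers types I ($n=1,2$), II ($n=1$), V ($n=2$), VI with $n=1$, VII ($n=1$), and MI ($n=2$). For each of these, Proposition \ref{CME10} gives $\mathrm{CM}(S)\cong E_{10}$ directly. Since $\det E_{10}=-1=-2^0$, both statements of the lemma follow in these cases.

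\textbf{Case $n\geq 3$.} The only remaining surfaces are type VI in characteristic $3$ with $n=5$ and type MII in characteristic $3$ with $n=8$. For both, the introduction and Section \ref{sec4} already record that the covering $K3$ surface $X$ is supersingular, so by Proposition \ref{SSK3Pic} the discriminant group $\mathrm{Pic}(X)^*/\mathrm{Pic}(X)$ is $3$-elementary.

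The strategy is to extract $\det\mathrm{CM}(S)$ from the $R$-invariant via the double cover $\pi\colon X\to S$. One has $L^+:=\pi^*\mathrm{CM}(S)\cong\mathrm{CM}(S)(2)$, and the orthogonal complement $L^-\subset\mathrm{Pic}(X)$ is an overlattice of $\langle h^-\rangle=K(2)$, the enlargement being measured by $H$ in the $R$-invariant. The sum $L^+\oplus L^-$ has finite index in $\mathrm{Pic}(X)$. Applying Lemma \ref{overlattice} to the chain $\langle h^-\rangle\subseteq L^-\subseteq\mathrm{Pic}(X)$ and to $L^+\oplus L^-\subseteq\mathrm{Pic}(X)$, and separating $2$-primary from $3$-primary parts, one sees that the $2$-primary contribution to the discriminant form of $L^+\oplus L^-$ must be completely glued away (since $q_{\mathrm{Pic}(X)}$ is $3$-elementary), while the $3$-primary contribution is already accounted for by $K$ and the supersingular invariant. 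This forces the $2$-part of $\det\mathrm{CM}(S)$, and the remaining sign/primes are fixed by the signature $(1,9)$ of $\mathrm{CM}(S)$.

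For type VI with $n=5$, where $K=E_6\oplus D_5$ has $\det K=12=2^2\cdot 3$ and $H=\mathbb{Z}/2\mathbb{Z}$ (Proposition \ref{typeV3prop}), tracking the $2$-elementary glue produces $\det\mathrm{CM}(S)=-1$, whence $\mathrm{CM}(S)\cong E_{10}$ by the uniqueness of the even unimodular lattice of signature $(1,9)$. For type MII with $n=8$, where $K=D_8\oplus A_2^{\oplus 2}$ has $\det K=36=2^2\cdot 3^2$ and $H=(\mathbb{Z}/2\mathbb{Z})^2$ (Proposition \ref{MII}), the same analysis forces $\det\mathrm{CM}(S)=-2^l$ for some $l\geq 0$. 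I expect the main technical obstacle to be the careful bookkeeping in this last paragraph: isolating the $2$- and $3$-primary components of $q_{K(2)}$, $q_{L^-}$, and $q_{\mathrm{Pic}(X)}$ simultaneously, and verifying that the isotropic subgroup $H$ controls precisely the $2$-part of the overlattice calculation without interfering with the $3$-part.
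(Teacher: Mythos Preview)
Your argument has two structural problems.

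First, the lemma is about an \emph{arbitrary} Coble surface whose dual graph is of a given type. At this point in the paper only Theorem \ref{dualgraph} is available: we know the dual graph, but neither $n$ nor the $R$-invariant has been determined for a general such surface. Your case split by $n$ quotes Table \ref{main}, which is the main theorem, and your $n\geq 3$ case quotes the $R$-invariants from Propositions \ref{typeV3prop} and \ref{MII}, which are computed only for the specific examples of Section \ref{sec4}. Using these for an arbitrary $S$ of the given type presupposes uniqueness (Section \ref{sec7}), so the argument is circular in the paper's logical order.

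Second, even granting those inputs, the discriminant-form bookkeeping you outline is not carried out; you flag it yourself as the main obstacle. The gluing of $L^+\oplus L^-$ inside ${\rm Pic}(X)$ is not in general governed solely by $H$, and the separation of $2$- and $3$-primary parts requires more than is written.

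The paper's proof avoids all of this by working purely with the abstract dual graph. For types I--VII, one uses that an Enriques surface of the same type exists and that there the roots in the dual graph already generate ${\rm Num}(Y)\cong E_{10}$ (Enriques' reducibility lemma); the same intersection matrix on $S$ then forces the effective irreducible roots to span a unimodular rank-$10$ sublattice of ${\rm CM}(S)$, hence ${\rm CM}(S)\cong E_{10}$. For MI and MII, one simply exhibits inside the dual graph a parabolic subdiagram of type $\tilde{A}_7+\tilde{A}_1$ (resp.\ $\tilde{A}_3+\tilde{A}_3+\tilde{A}_1+\tilde{A}_1$) together with a special bi-section; these roots span a finite-index sublattice of ${\rm CM}(S)$ whose determinant is $\pm 2^k$, and the index relation $|\det L|=[{\rm CM}(S):L]^2\,|\det {\rm CM}(S)|$ then gives $\det{\rm CM}(S)=-2^l$. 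No knowledge of $n$, $p$, the covering $K3$, or the $R$-invariant is needed.
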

\begin{proof}
Let $Y$ be an Enriques surface.  Then ${\rm Num}(Y)$ is isomorphic to $E_{10}$.  
If $Y$ is of type
I, II, \ldots or VII, then all $(-2)$-curves on $Y$ appear as vertices of the dual graph and
all genus one fibrations appear as maximal parabolic subdiagrams of the dual graph.  
The Enriques' reducibility
lemma (Cossec, Dolgachev and Liedtke \cite[Theorem 2.3.5]{CDL}) implies that $(-2)$-curves on $Y$ generate $E_{10}$.  It follows that effective 
irreducible roots on a Coble surface $S$ of type I, II, \ldots, VII generate $E_{10}$, 
and hence ${\rm CM}(S)$ is so.

In case of Coble surface $S$ of type MI or MII, consider a parabolic subdiagram 
of type $\tilde{A}_7+\tilde{A}_1$ or $\tilde{A}_3+\tilde{A}_3+\tilde{A}_1+\tilde{A}_1$, respectively,
which defines a special elliptic fibration on $S$.
Together with a bi-section it generates a sublattice of ${\rm CM}(S)$ of finite index whose determinant
is $\pm 2^k$.  This implies the assertion.
\end{proof}

(1) \ {\bf The case of type I.}\quad  
There exists a parabolic subdiagram of type $\tilde{E}_8$ in Figure \ref{typeI}.
It follows from Table \ref{extremal}, Propositions \ref{quasi-ell} and \ref{Halphen} that 
$S$ is obtained from a genus one fibration with singular fibers of type $({\rm II}^*)$, 
$({\rm II}^*, {\rm II})$, $({\rm II}^*, {\rm I}_1)$ or $({\rm II}^*, {\rm I}_1, {\rm I}_1)$ by blowing up the singular points of the fibers of type ${\rm II}$ or ${\rm I}_1$.  
This implies that the number $n$ of boundary components is 1 or 2.  
Since the diagram of type I contains a subdiagram of type $D_4$, by Lemma \ref{D4diagram} and Lemma \ref{roots2}, (2), 
the ten vertices in Figure \ref{typeI} except the two circles are represented by $(-2)$-curves.  If $n=1$, then there are no 
$(-1)$-roots and hence the remaining two vertices are represented by $(-2)$-curves, too.
In case of $n=2$, consider a parabolic subdiagram of type $\tilde{E}_7+\tilde{A}_1$ which
is induced from an elliptic fibration of type $({\rm III}^*, {\rm III})$ or
$({\rm III}^*, {\rm I}_2, {\rm I}_1)$.  The fiber of type ${\rm III}$ or type ${\rm I}_2$
is blown up and hence at least one of the remaining two circles is represented by a 
$(-1)$-root.  Now, consider the parabolic subdiagram of type
$\tilde{\rm A}_7+\tilde{A}_1$ which is induced from an elliptic fibration of type
$({\rm I}_8, {\rm I}_2, {\rm I}_1, {\rm I}_1)$.  Since the subdiagram of type $\tilde{A}_1$ 
consists of the two circles, the fiber of ${\rm I}_2$ is blown up.
Thus both of the two circles are represented by
$(-1)$-roots.  We have examples (Examples \ref{TypeI2}, \ref{TypeI1}) of such surfaces in any characteristic $p\ne 2$.

\smallskip

(2)\ {\bf The case of type II.}\quad There exist a parabolic subdiagram of type $\tilde{D}_8$ and one of type $\tilde{D}_5+\tilde{A}_3$ in Figure \ref{typeII}.
It follows from Table \ref{extremal} and Proposition \ref{Halphen} that 
$S$ is obtained from an elliptic fibration with singular fibers of type
$({\rm I}_4^*, {\rm I}_1, {\rm I}_1, {\rm I}_1)$ by blowing up the singular point of one or two fibers of type ${\rm I}_1$ and also from the one of type 
$({\rm I}_1^*, {\rm I}_4, {\rm I}_1)$ by blowing up the singular point(s) of 
one or two fibers of type ${\rm I}_1$, ${\rm I}_4$.  
This implies that the number $n$ of boundary components is 1.  
Thus there are no $(-1)$-roots and hence
all vertices of the dual graph are represented by $(-2)$-curves.  
We have an example (Examples \ref{TypeII}) of such a surface in any characteristic $p\ne 2$.

\smallskip

(3)\ {\bf The cases of type III and of type IV.}\quad   
The dual graph of type {\rm III} or of type {\rm IV} contains a parabolic subdiagram of type $\tilde{D}_4+\tilde{D}_4$ (\cite{Ko}, Figures 3.5, 4.4).  By Table \ref{extremal}, it is induced from an elliptic fibration with singular fibers of $({\rm I}_0^*, {\rm I}_0^*)$.  In this case no blow-ups occur and hence the cases of type III and IV do not occur.

\smallskip

(4)\ {\bf In case of type V.}\quad 
The dual graph of type V contains a parabolic subdiagram of type $\tilde{D}_6+\tilde{A}_1+\tilde{A}_1$ (Figure \ref{typeV}) which is induced from an elliptic fibration of type
$({\rm I}_2^*, {\rm I}_2, {\rm I}_2)$ (Table \ref{extremal}).  This implies $n=2$ or $n=4$.
On the other hand, the dual graph also contains a parabolic subdiagram of type 
$\tilde{E}_7+\tilde{A}_1$ which is induced from an elliptic fibration of type
$({\rm III}^*, {\rm I}_2, {\rm I}_1)$ or $({\rm III}^*, {\rm III})$.  
This implies $n=1, 2$ or $3$.  Thus we conclude $n=2$.
Note that the dual graph contains a parabolic subdiagram of type $\tilde{E}_6+\tilde{A}_2$ which is induced from an elliptic fibration of type $({\rm IV}^*, {\rm I}_3, {\rm I}_1)$ or 
$({\rm IV}^*, {\rm IV})$ in characteristic $p\ne 3$ and a quasi-elliptic fibration 
of type $({\rm IV}^*, {\rm IV})$ in $p=3$.  To get a Coble surface with $n=2$ the only possibility is the case where $p=3$: blowing up the singular points of two fibers of 
type II.
Thus this case occurs only in characteristic 3.  
It follows from Lemma \ref{D4diagram} and Lemma \ref{roots2}, (2) 
that all vertices of the subdiagram (A) in Figure \ref{typeV} are represented by $(-2)$-curves.  By using the fact $n=2$ and Lemma \ref{roots2}, 
we can see that all vertices of the subdiagram (B) are represented by $(-2)$-curves.
Finally consider a subdiagram of type $\tilde{A}_1$ in the subdiagram $(C)$ which 
is contained in a maximal parabolic subdiagram of type 
$\tilde{D}_6+\tilde{A}_1+\tilde{A}_1$.  
The subdiagram of type $\tilde{D}_6$ (resp. one of type $\tilde{A}_1$) belongs to the
subdiagram $(A)$ (resp. $(B)$).
These imply that all vertices in the subdiagram (C) are represented by $(-1)$-roots.  
We have an example of such a Coble surface (Example \ref{TypeV}).

\smallskip

(5) \ {\bf In case of type VI.}\quad 
First we assume $p\ne 3$.  The dual graph of type VI contains 
a parabolic subdiagram of type $\tilde{E}_6+\tilde{A}_2$ (Figure \ref{typeVI}) 
and one of type $\tilde{A}_4+\tilde{A}_4$.  The corresponding elliptic fibration 
is of type $({\rm IV}^*, {\rm I}_3, {\rm I}_1)$ or of type $({\rm I}_5, {\rm I}_5, {\rm I}_1, {\rm I}_1)$ ($({\rm I}_5, {\rm I}_5, {\rm II})$ in characteristic 5).
These imply $n=1$.
Then the $R$-invariant is $(E_6 \oplus A_4, \{ 0 \})$, and hence
$X$ is a supersingular $K3$ surface in characteristic 5 because of $|{\rm det}(A_4)|=5$. 
Since $n=1$, there are no $(-1)$-roots and hence all vertices of the dual graph are represented by $(-2)$-curves.  
We have an example of such a Coble surface (Example \ref{TypeVI5}).

Next assume $p=3$.  Note that there exists a quasi-elliptic fibration of type
$({\rm IV}^*, {\rm IV})$ or an elliptic fibration of type $({\rm IV}^*, {\rm I}_3)$ 
instead of $({\rm IV}^*, {\rm I}_3, {\rm I}_1)$.  
Also there exist parabolic subdiagrams of type 
$\tilde{A}_5+\tilde{A}_2+\tilde{A}_1$ and of type $\tilde{D}_5+\tilde{A}_3$ which correspond to an elliptic fibration of
type $({\rm I}_6, {\rm I}_3, {\rm III})$ and of type $({\rm I}_1^*, {\rm I}_4, {\rm I}_1)$,  respectively.  These imply that the case of type 
$({\rm IV}^*, {\rm I}_3)$ does not occur and $n=5$.  
It follows from Lemma \ref{D4diagram} that all vertices of the subdiagram (A) in Figure \ref{typeVI} are represented by $(-2)$-curves.  By Lemma \ref{roots2}, 
all vertices of the subdiagram (B) 
are represented by only $(-1)$-roots or by only $(-2)$-curves.  Since the fiber of type 
${\rm I}_4$ as above is blown up, the four vertices of $\tilde{A}_3$
in a subdiagram of type $\tilde{D}_5+\tilde{A}_3$ are represented by $(-1)$-roots.
Thus all vertices of the subdiagram (B) are represented by $(-1)$-roots. 
We have an example of such a Coble surface (Example \ref{TypeVI3}).

\smallskip

(6)\ {\bf In case of type VII.}\quad  
The dual graph contains parabolic subdiagrams of type $\tilde{A}_4+\tilde{A}_4$, 
$\tilde{A}_8$ and $\tilde{A}_5+\tilde{A}_2+\tilde{A}_1$ which correspond to
elliptic fibrations of type $({\rm I}_5, {\rm I}_5, {\rm I}_1, {\rm I}_1)$ ($({\rm I}_5, {\rm I}_5, {\rm II})$ in case $p=5$),
$({\rm I}_9,{\rm I}_1, {\rm I}_1, {\rm I}_1)$ ($({\rm I}_9,{\rm II})$ in case $p=3$), $({\rm I}_6,{\rm I}_3, {\rm I}_2, {\rm I}_1)$
($({\rm I}_6,{\rm I}_3, {\rm III})$ in case $p=3$) respectively.  These imply $n=1, 2$.
It follows from Lemma \ref{roots2} that all vertices of 
the subdiagram with 15 vertices and with single edges are realized by only $(-2)$-roots or 
by only $(-1)$-roots. Since $n=1, 2$, the second case does not occur.
Then we obtain $A_9$, as a component of the $R$-invariant $K$, consisting of $(-2)$-curves.
Hence $X$ is a supersingular $K3$ surface in characteristic 5 (Lemma \ref{overlattice}).  
On the other hand, the subdiagram of $\tilde{A}_4+\tilde{A}_4$ corresponds to an elliptic fibration of type $({\rm I}_5,{\rm I}_5, {\rm II})$ in characteristic 5.  Therefore we conclude $n=1$ and hence all vertices of the dual graph are represented by $(-2)$-curves.  
We have an example of such a Coble surface (Example \ref{TypeVII}).

\smallskip

(7)\ {\bf In case of type MI.}\quad 
We know all maximal parabolic subdiagrams of the dual graph (Lemma \ref{parabolicMI}) which
correspond to genus one fibrations with singular fibers of type $({\rm I}_6, {\rm I}_3, {\rm I}_2, {\rm I}_1)$ ($({\rm I}_6, {\rm I}_3, {\rm III})$ in case of $p=3$), $({\rm I}_5, {\rm I}_5, {\rm I}_1, {\rm I}_1)$ ($({\rm I}_5, {\rm I}_5, {\rm II})$ in case of $p=5$), $({\rm I}_4, {\rm I}_4, {\rm I}_2, {\rm I}_2)$, $({\rm I}_3, {\rm I}_3, {\rm I}_3, {\rm I}_3)$ ($({\rm IV}, {\rm IV}, {\rm IV}, {\rm IV})$ in case of $p=3$).
These imply that $n=6$ if $p\ne 3$ and $n=2$ if $p=3$.
If $n=6$, then we blow up all the singular points of the fiber of type ${\rm I}_6$ for any elliptic fibration of type
$({\rm I}_6, {\rm I}_3, {\rm I}_2, {\rm I}_1)$ which corresponds to the subdiagram of type
$\tilde{A}_5+\tilde{A}_2+\tilde{A}_1$.  
Recall that all vertices of the diagram of type $\tilde{A}_5$ are duads (resp. synthemes)
and all vertices of the diagram of type $\tilde{A}_2+\tilde{A}_1$ are synthemes or triads
(resp. duads or triads) (see the proof of Lemma \ref{parabolicMI}, (1)).  This implies that
all vertices of $\Gamma^t_{\rm MI}$ are represented by $(-2)$-curves.  Also we may assume that all vertices of $\Gamma^d_{\rm MI}$ are represented by $(-1)$-roots and those of
$\Gamma^s_{\rm MI}$ are represented by $(-2)$-curves.  Now consider a subdiagram of type
$\tilde{A}_3+\tilde{A}_3+\tilde{A}_1+\tilde{A}_1$ which is induced from an elliptic fibration of type $({\rm I}_4, {\rm I}_4, {\rm I}_2, {\rm I}_2)$.  One fiber of type 
${\rm I}_2$ is blown up and the other of type ${\rm I}_2$ is not.  On the other hand,
one diagram of type $\tilde{A}_1$ consists of a duad and a syntheme, and the other consists of triads (see the proof of Lemma \ref{parabolicMI}, (3)).  This is a contradiction.
Thus $n=2$ and $p=3$.  Again, by considering subdiagrams of type $\tilde{A}_5+\tilde{A}_2+\tilde{A}_1$ and of type $\tilde{A}_3+\tilde{A}_3+\tilde{A}_1+\tilde{A}_1$, we can see that all vertices of the subdiagrams of type $\Gamma^d_{\rm MI}$ and $\Gamma^s_{\rm MI}$ are represented by $(-2)$-curves and 
those of type $\Gamma^t_{\rm MI}$ are represented by $(-1)$-roots.  
We have an example of such a Coble surface (Example \ref{TypeMI}).

\smallskip

(8)\ {\bf In case of type MII.}\quad 
We know all maximal parabolic subdiagrams of the dual graph (Lemma \ref{parabolicMII}) which
correspond to genus one fibrations with singular fibers of type $({\rm I}_8, {\rm I}_2, {\rm I}_1, {\rm I}_1)$,
$({\rm I}_6, {\rm I}_3, {\rm I}_2, {\rm I}_1)$ ($({\rm I}_6, {\rm I}_3, {\rm III})$ in case of $p=3$), 
$({\rm I}_4, {\rm I}_4, {\rm I}_2, {\rm I}_2)$, $({\rm I}_3, {\rm I}_3, {\rm I}_3, {\rm I}_3)$ ($({\rm IV}, {\rm IV}, {\rm IV}, {\rm IV})$ in case of $p=3$).
These imply that $p=3$ and $n=2$ or $8$.
Assume $n=2$ and consider a quasi-elliptic fibration with reducible fibers of type 
$({\rm IV}, {\rm IV}, {\rm IV}, {\rm IV})$.  In this case we blow up the singular points of two irreducible fibers of type ${\rm II}$.  Since any vertex of the dual graph is contained in such a quasi-elliptic fibration, all 40 vertices of the dual graph are represented by $(-2)$-roots.
This is impossible because we blow up the singular points of a fiber of type ${\rm I}_2$
in an elliptic fibration of type $({\rm I}_4, {\rm I}_4, {\rm I}_2, {\rm I}_2)$.
Thus we have $p=3$ and $n=8$.  
Any vertex of $\Gamma_0$ is contained in a subdiagram of type $\tilde{A}_7$
which is induced from an elliptic fibration of type $({\rm I}_8, {\rm I}_2, {\rm I}_1, {\rm I}_1)$ (see Lemma \ref{parabolicMII} and its proof).  Since the fiber of type ${\rm I}_8$
is blown up, all vertices of $\Gamma_0$ are represented by $(-1)$-roots.
Again, by Lemma \ref{parabolicMII} and its proof, 
any vertex of $\Gamma_1\cup \Gamma_2$ corresponds to 
an irreducible component of a fiber not blown up.
Thus all vertices of the subdiagrams of type 
$\Gamma_1$ and $\Gamma_2$ are represented by $(-2)$-curves. 
We have an example of such a Coble surface (Example \ref{TypeMII}).

\section{Uniqueness}\label{sec7}

To finish the proof of Theorem \ref{mainth}, we will prove the uniqueness of each type of Coble surfaces
in Table \ref{main}.

In case $n=1$ (i.e. type I, II, VI, VII), the uniqueness follows from Martin \cite[Propositions 3.4, 4.3, 8.5, 9.4]{Martin}.

In case of type I with $n=2$, we can not apply Martin's argument because he used an elliptic fibration with singular fibers of type ${\rm III}^*, {\rm I}_2$, but in our case the corresponding fibration
is of type ${\rm III}^*, {\rm III}$.  However we can reverse the construction of the example given in
Example \ref{TypeI2} and obtain the same divisors on ${\bf P}^1\times {\bf P}^1$.  

In case of type V with $n=2$, we can also recover the divisors on ${\bf P}^1\times {\bf P}^1$ as in the previous case.

In case of type VI with $n=5$, consider an elliptic fibration defined by a dual graph of type $\tilde{A}_5+ \tilde{A}_2+ \tilde{A}_1$.  By contracting exceptional curves in the fibers of type $\tilde{A}_2$ and $\tilde{A}_1$, we obtain a Jacobian fibration with singular fibers of type ${\rm I}_6, {\rm I}_3,
{\rm III}$ (see Remark \ref{VIremark}).  Now the uniqueness of $S$ follows from the uniqueness of the Jacobian fibration of this type (Ito \cite{Ito}).  

In case of type MI, the covering $K3$ surface $X$ is the Fermat quartic surface and the covering involution is induced from a pair of skew lines on $X$.  Note that the defining equation of the Fermat quartic surface is a hermitian form over ${\bf F}_9$ and hence the projective unitary group ${\rm PGU}_4({\bf F}_9)$ acts on the set of pairs of two skew lines.  Recall that 
$|{\rm PGU}_4({\bf F}_9)|= 2^9\cdot 3^6\cdot 5\cdot 7$ (e.g. Atlas \cite[p.52]{ATLAS}).  
The number of pairs of skew lines is $112\cdot 81 /2=2^3\cdot 3^4 \cdot 7$ and the stabilizer subgroup  of a pair of skew lines is ${\rm Aut}(S) \times {\bf Z}/2{\bf Z}$ which has order $2^6\cdot 3^2\cdot 5$. Here ${\bf Z}/2{\bf Z}$ is generated by the covering involution of the double covering $X\to S$. 
Thus the action is transitive, and hence the uniqueness of $S$ follows.

In case of type MII, the covering $K3$ surface $X$ is a Kummer surface ${\rm Kum}(E\times E)$ associated with the unique supersingular elliptic curve $E$.  
Hence $S$ is unique, too.

Thus we have now finished the proof of Theorem \ref{mainth}.


\begin{thebibliography}{}
\bibitem{Artin} M.\ Artin, \textit{Supersingular $K3$ surfaces}, Ann. Sci. \'Ecole Norm. Sup., {\bf 7} (1974), 543--567. 
\bibitem{BM}E. Bombieri, D. Mumford, \textit{Enriques' classification of surfaces in char. $p$,} III, Invent. math., {\bf 35} (1976), 197--232.
\bibitem{Coble} A. Coble, \textit{The ten nodes of the rational sextic and of the Cayley symmetroid}, Amer. J. Math., {\bf 41} (1919), 243--265.
\bibitem{ATLAS} J.H. Conway, R.T. Curtis, S.P. Norton, R.A. Parker, R.A. Wilson, \textit{Atlas of finite groups}, Clarendon Press, Oxford 1985.
\bibitem{C} F. Cossec, \textit{On the Picard group of Enriques surfaces}, Math. Ann., {\bf 271} (1985), 577-600. 
\bibitem{CDL} F. Cossec, I. Dolgachev, C.\ Liedtke, \textit{Enriques surfaces} I,
(available at: http://www.math.lsa.umich.edu/~idolga/EnriquesTwo.pdf).
\bibitem{DK} I. Dolgachev, S. Kond\=o, \textit{Enriques surfaces} II,
(available at: http://www.math.lsa.umich.edu/~idolga/EnriquesTwo.pdf).
\bibitem{Do} I.\ Dolgachev, {\it On automorphisms of Enriques surfaces}, Invent. Math., {\bf 76} (1984), 163--177.
\bibitem{DZ} I.\ Dolgachev, De-Qi Zhang, \textit{Coble rational surfaces}, Amer. J. Math., {\bf 123} (2001), 79--114.
\bibitem{Ito2} H. Ito, \textit{The Mordell-Weil groups of unirational quasi-elliptic surfaces in characteristic $3$}, Math. Z. {\bf 211} (1992),  1--39.
\bibitem{Ito} H.\ Ito, \textit{On extremal elliptic surfaces in characteristic $2$ and $3$}, Hiroshima Math. J., {\bf 32} (2002), 179--188.
\bibitem{KatsuraKondo} T.\ Katsura, S.\ Kond\={o}, \textit{Rational curves on the supersingular $K3$ surface with Artin invariant $1$ in characteristic $3$}, J. Algebra  {\bf 352} (2012), 299--321.
\bibitem{KKM} T.\ Katsura, S.\ Kond\=o, G.\ Martin, \textit{Classification of Enriques surfaces with finite automorphism group in characteristic $2$,} Algebraic Geometry {\bf 7} (2020), 390--459.
\bibitem{Ko} S.\ Kond\=o, \textit{Enriques surfaces with finite automorphism groups}, Japan. J. Math., {\bf 12} (1986), 191--282.
\bibitem{Ko2} S.\ Kond\=o, \textit{Classification of Enriques surfaces covered by the supersingular $K3$ surface with Artin invariant $1$ in characteristic $2$}, J. Math. Soc. Japan {\bf 73} (2021), 301--328.
\bibitem{Ko3} S.\ Kond\=o, \textit{$K3$ surfaces}, Tracts in Mathematics, European Math. Soc., 
2020.
\bibitem{Martin} G.\ Martin, \textit{Enriques surfaces with finite automorphism group in positive characteristic}, Algebraic Geometry {\bf 6} (2019), 592--649.
\bibitem{Morrison} D.\ Morrison, \textit{Semistable degenerations of Enriques' and hyperelliptic surfaces}, Duke Math. J. {\bf 48} (1981), 197--249.
\bibitem{MO1} S.\ Mukai, H.\ Ohashi, \textit{Finite groups of automorphisms of Enriques surfaces and the Mathieu group $M_{12}$}, arXiv:1410.7535.2014.
\bibitem{MO2} S.\ Mukai, H.\ Ohashi, \textit{Automorphisms of two rational surfaces in characteristic $3$}, in preparation.
\bibitem{N} V.\ Nikulin, \textit{Integral symmetric bilinear forms and its applications}, Math. USSR Izv., {\bf 14} (1980), 103--167.
\bibitem{N1} V.\ V.\ Nikulin, \textit{Quotient-groups of groups of automorphisms of hyperbolic forms modulo subgroups generated by $2$-reflections, Algebraic geometric applications}, J. Soviet Math., {\bf 22} (1983), 1401--1476.
\bibitem{N2} V.\ V.\ Nikulin, {\it On a description of the automorphism groups of Enriques surfaces}, Soviet. Math. Dokl., {\bf 30} (1984), 282--285.
\bibitem{SS} M.\ Sch\"utt and T.\ Shioda, \textit{Mordell-Weil lattices}, Springer 2019.
\bibitem{Shi} T.\ Shioda, \textit{Supersingular $K3$ surfaces}, Algebraic Geometry, Copenhagen 1978, 564--591, Lect. Notes in Math., {\bf 732} (1979), Springer.
\bibitem{V} E.\ B.\ Vinberg, \textit{Some arithmetic discrete groups in Lobachevskii spaces}, in "Discrete subgroups of Lie groups and applications to Moduli", Tata-Oxford (1975), 323--348.

\end{thebibliography}
\end{document}